\documentclass[twoside]{article}
\usepackage[accepted]{aistats2026}

\usepackage[utf8]{inputenc} 
\usepackage[T1]{fontenc}    
\usepackage{hyperref}       
\usepackage{xurl}            
\usepackage{booktabs}       
\usepackage{mathtools,amssymb,amsmath,amsthm}
\usepackage{amsfonts}       
\usepackage{enumitem}
\usepackage{nicefrac}       
\usepackage{microtype}      
\usepackage{xcolor}         

\usepackage{caption}

\newcommand{\repeatcaption}[2]{%
  \renewcommand{\thefigure}{\ref{#1}}%
  \captionsetup{list=no}%
  \caption{#2 (repeated from page \pageref{#1})}%
  \addtocounter{figure}{-1}}

\usepackage{subcaption}

\usepackage{pgfplots}
\DeclareUnicodeCharacter{2212}{−}
\usepgfplotslibrary{groupplots,dateplot}
\usetikzlibrary{patterns,shapes.arrows}
\pgfplotsset{compat=newest}

\newlength{\figWidth}
\newlength{\figHeight}
\setlength{\figWidth}{6.5cm}
\setlength{\figHeight}{4cm}
\newcommand{\legendDist}{0.35}
\newcommand{\legendEntryFontSize}{\scriptsize}

\usepackage{stmaryrd}

\DeclareMathOperator*{\argmin}{arg\,min}  
\DeclareMathOperator{\tr}{tr}

\renewcommand{\leq}{\leqslant}
\renewcommand{\geq}{\geqslant}
\renewcommand{\le}{\leqslant}
\renewcommand{\ge}{\geqslant}
\renewcommand{\succeq}{\succcurlyeq}
\renewcommand{\preceq}{\preccurlyeq}

\newcommand{\cK}{\mathcal{K}}
\newcommand{\domain}{\mathcal{K}}

\usepackage{algorithm}
\usepackage{algpseudocode}

\theoremstyle{plain}
\newtheorem{Th}{Theorem}[section]

\newtheorem{theorem}[Th]{Theorem}
\newtheorem{lemme}[Th]{Lemma}
\newtheorem{lemma}[Th]{Lemma}

\theoremstyle{definition}
\newtheorem{remark}[Th]{Remark}

\newtheorem*{remark*}{Remark}

\definecolor{crimson2143940}{RGB}{214,39,40}
\definecolor{forestgreen4416044}{RGB}{44,160,44}
\definecolor{mediumpurple148103189}{RGB}{148,103,189}

\begin{document}

\twocolumn[

\aistatstitle{Optimized Projection-Free Algorithms for Online Learning: Construction and Worst-Case Analysis}

\aistatsauthor{ Julien Weibel \And Pierre Gaillard }

\aistatsaddress{ 
  Inria, D.I. École Normale Supérieure, CNRS\\ PSL Research University\\
  \texttt{julien.weibel@inria.fr} 
  \And 
  Université Grenoble Alpes, Inria, CNRS\\ Grenoble INP, LJK\\
  \texttt{pierre.gaillard@inria.fr}
  } 
 
  \aistatsauthor{  Wouter M. Koolen \And Adrien Taylor} 

  \aistatsaddress{ 
  Centrum Wiskunde \& Informatica, Amsterdam\\
  and Twente University, Enschede\\
  \texttt{wmkoolen@cwi.nl}
  \And 
  Inria, D.I. École Normale Supérieure, CNRS\\ PSL Research University\\
  \texttt{adrien.taylor@inria.fr}
  } 
 
\runningauthor{Julien Weibel, Pierre Gaillard, Wouter M. Koolen, Adrien Taylor}

  ]

\begin{abstract}
This work studies and develops projection-free algorithms for online learning with linear optimization oracles (a.k.a.~Frank--Wolfe) for handling the constraint set,
and for convex loss functions. More precisely, this work 
(i)~shows how to exploit semidefinite programming to jointly design and analyze online Frank--Wolfe-type algorithms numerically in a variety of settings,
(ii)~leverages those design techniques to propose an improved (optimized) variant of an online Frank--Wolfe algorithm along with its conceptually simple potential-based proof,
and (iii)~extends this proof to its anytime version, which benefits from a similar $O(T^{3/4})$ regret rate without requiring knowledge of the time horizon $T$ in advance.
We are not aware of other direct regret guarantees for an anytime version of online Frank--Wolfe 
without using the classical doubling trick.

Based on the semidefinite technique, we conclude with strong numerical evidence suggesting that no pure online Frank--Wolfe algorithm within our model class can have a regret guarantee better than $O(T^{3/4})$  without additional assumptions, that the current algorithms do not have optimal constants, and that multiple linear optimization rounds do not generally help to obtain better regret bounds.
\end{abstract}

\abovedisplayskip=5.0pt plus 1.0pt minus 2.5pt 
\belowdisplayskip=5.0pt plus 1.0pt minus 2.5pt
\abovedisplayshortskip=0.0pt plus 1.5pt
\belowdisplayshortskip=3.0pt plus 1.5pt minus 1.5pt

\section{\uppercase{Introduction}}

This work considers the online learning problem, where we sequentially query points $x_1,x_2,\ldots,x_T$ in a domain $\domain$. 
At each time step $t$, we play $x_t$, incur a cost $\ell_t(x_t)$ which we aim to minimize by convention, 
and observe a gradient $g_t=\nabla \ell_t(x_t)$. 
The quality of our guesses is evaluated by comparison with a reference $x_\star\in \domain$. 
In this context, we aim to estimate (or bound) the \emph{cumulative regret} $R_T$ incurred by our choices $x_1,\ldots,x_T$ as compared to a reference point $x_\star\in\domain$:\vspace*{-1pt}
\begin{align*}
R_T(x_1,\ldots,x_T; x_\star)\triangleq & \sum_{t=1}^T\Big\{ \ell_t(x_t)- \ell_t(x_\star)\Big\} \\
 \leq & \sup_{x\in\domain}\Bigg\{\sum_{t=1}^T \ell_t(x_t)- \sum_{t=1}^T \ell_t(x)\Bigg\}.
\end{align*}

In many applications---such as online advertising, sensor networks, or mobile user applications---data is acquired and processed in real time, arriving as a continuous, high-rate flow. This necessitates the adoption of online learning methods, which aim to rapidly integrate large volumes of data as they arrive.
Online learning algorithms and regret bounds form essential frameworks for the theoretical study and optimization of reinforcement learning algorithms~\cite{kaelbling1996reinforcement,wang2022deep}, recommender systems~\cite{bobadilla2013recommender}, or for forecasting time series using expert advice~\cite{cesa2006prediction}. 
These methods find applications in areas like load forecasting~\cite{devaine2013forecasting}, finance and portfolio selection~\cite{li2014online}, generative adversarial networks~\cite{kodali2017convergence}, and, more recently, large language models~\cite{park2024llm}. Online learning has been extensively studied, and we refer to the classical work~\cite{hazanIntroductionOnlineConvex2016} and its references for a comprehensive overview of the topic.

\paragraph*{Assumptions and problem setup.} In this work, we consider the standard situation where the loss functions $\ell_t$ are convex with bounded gradients $\|g_t\|\leq L$ for all $t\geq 1$ for simplicity. Similarly, we assume that the domain $\cK$ is a closed convex bounded set of diameter~$D$. Variations around those assumptions are discussed in Section~\ref{sec:PEP_for_OFW} and in the appendix.

\paragraph{Related Work.} As discussed in classical textbooks (see, e.g.,~\cite{hazanIntroductionOnlineConvex2016,orabona2019modern}), a few \emph{meta} algorithms drive the basis and intuitions behind most online learning schemes, which can often be seen as appropriate approximations to the meta algorithms. Prominent examples include the "follow the leader" (FTL) and "follow the regularized leader" (FTRL) methods, as well as online gradient descent (OGD) \cite{zinkevich2003online}. These algorithms offer favorable regret guarantees of order $O(\sqrt{T})$. However, classical OGD-type methods rely on projections to manage the constraint set $\domain$. In numerous applications, such as matrix completion or recommender systems \cite{hazanIntroductionOnlineConvex2016}, these projections are computationally expensive and constitute a significant efficiency bottleneck. This challenge has led to the development of projection-free algorithms, which replace projections with potentially cheaper oracles, allowing for faster computations. In offline smooth convex optimization over polyhedral sets, the pioneering projection-free algorithm was introduced by Frank--Wolfe~\cite{frank1956algorithm}, that resort to a linear optimization oracle. In the context of online convex optimization, the first such algorithm was proposed by \cite{kalai2005efficient}, but it was limited to linear losses. Subsequently, \cite{hazan2012projection} introduced an online version of the Frank--Wolfe algorithm—referred to as online Frank--Wolfe (OFW)—which guarantees a regret upper bound of order $O(T^{3/4})$ for convex losses. This regret rate is less favorable compared to standard online learning algorithms that permit projections. As a result, considerable effort has been directed toward achieving improved regret guarantees for online projection-free algorithms. Some variants have attained the optimal regret rate of $O(\sqrt{T})$ by using membership oracles, which differ from the linear optimization oracles initially used in Frank--Wolfe and may be less efficient in certain scenarios \cite{levy2019projection, mhammedi2022efficient}. For linear optimization oracles, 
regret rates better than $O(T^{3/4})$
have been achieved under additional assumptions.
\cite{hazan2020faster} introduced a stochastic algorithm with $O(T^{2/3})$  expected regret for smooth functions.
\cite{Xie_Shen_Zhang_Wang_Qian_2020} proposed a stochastic algorithm with $\tilde{O}(\sqrt{T})$ expected regret for smooth  functions but with $O(T)$ linear optimization oracle calls per round.
\cite{Wan_Zhang_2021} proposed a variant of OFW for strongly convex feasible sets with $O(T^{2/3})$ regret for convex losses and $O(\sqrt{T})$ regret for strongly convex losses.

Despite these advancements, it remains an open question whether online Frank--Wolfe variants, relying on linear optimization oracles, are inherently limited to worst-case regret rate of $\Omega(T^{3/4})$ or if improved rates are possible in our setting.

\paragraph{Algorithmic setup.} This work focuses on the specific case of online learning using projection-free algorithms with access to the domain $\domain$ only through a linear optimization oracle.  At each round $t$, we play $x_t\in\domain$, incur some loss $\ell_t(x_t)$, and observe a gradient $g_t = \nabla\ell_t(x_t)$. We introduce an online procedure that encompasses a family of online Frank--Wolfe-type algorithms and unfolds as follows
\begin{align}
& \mathrm{dir}_t =  \sum_{s=1}^t \eta_{t,s}\, g_s 
                + \sum_{s=1}^{t-1} \beta_{t,s}\, (v_s-x_1) \nonumber \\[-.01cm]
& v_t = \argmin_{v\in \domain} \langle \mathrm{dir}_t , v \rangle \label{eq:OFW_general} \\[-.2cm]
& x_{t+1} = x_1 + \sum_{s=1}^t \gamma_{t+1,s}\, (v_s - x_1), \nonumber 
\end{align}
where the algorithm is parameterized by $\{\eta_{t,s}\}_{1\leq s \leq t \leq T}$ (to combine previous gradient information),~$\{\beta_{t,s}\}_{1\leq s < t \leq T}$ (to pick the linearization point $\bar{x}_t$), 
and~$\{\gamma_{t,s}\}_{1\leq s < t \leq T}$ to combine the previous \emph{atoms} $v_t$ (extreme points of $\domain$) of the linear optimization procedure.
In other words, to choose the next query point $x_{t+1}\in\domain$, we form the intermediate objective function $f_t(x)=\langle \sum_{s=1}^t \eta_{t,s}\, g_s,\,x\rangle+\tfrac12\|x-x_1\|^2_2$, approximate it by the linear function $x \mapsto \langle x, \nabla f_t(\bar x_t) \rangle$ in the point $\bar{x}_t = x_1 + \sum_{s=1}^{t-1}\beta_{t,s}\, (v_s-x_1)$ for some sub-probability $\beta_t$, and use the linear optimization oracle to optimize that linear approximation over $\domain$. 
The only constraints on algorithm parameters in~\eqref{eq:OFW_general} are that $\gamma_{t,s}\geq 0$ for all $s<t\leq T$ and $\sum_{s=1}^{t-1} \gamma_{t,s} \leq 1$ for all $t\leq T$
(we do not constrain $\eta_{t,s}$ and $\beta_{t,s}$ 
for wider algorithmic design possibilities).
Note that translations of the domain $\domain$ and initial point $x_1$ leave the search direction $\mathrm{dir}_t$ invariant, while the oracle responses $v_t$ and iterates $x_t$ translate along.

The case \cite[Algorithm 1 \& Theorem 4.4]{hazan2012projection} corresponds to the choices 
$\eta_{t,s} = \frac{D}{2 L T^{3/4}}$,  $\beta_{t,s}=\gamma_{t,s}$, 
$\gamma_{t,t-1}= \frac{1}{\sqrt{t}}$
and $\gamma_{t,s} = \gamma_{t-1,s} (1-\gamma_{t,t-1})$
for $s<t-1$. As for \cite[Algorithm 27 \& Theorem 7.3]{hazanIntroductionOnlineConvex2016}, they correspond to nearly the same choices with $\gamma_{t,t-1}= \min(1,\frac{2}{\sqrt{t}})$.

\paragraph*{Contributions.}

We show how to leverage semidefinite programming to study (exact) worst-case regrets for online learning algorithms.
The method is based on forming a program maximizing the algorithm's regret over the problem instance (loss functions and feasible set), and then reformulating it losslessly as a convex semidefinite program (SDP).
Solving this SDP allows us to obtain (exact) worst-case regret values along with their associated worst-case instances.
In the optimization literature, those programs are commonly referred to as a \emph{performance estimation problem} (PEP, as coined by~\cite{droriPerformanceFirstorderMethods2014}).
Those PEP methods have been extensively studied in classical (offline) optimization settings, but they have not been explored so far for regret minimization with adversarial functions that may change at every time step.

Most works on online learning algorithms report numerical experiments on synthetic stochastic data or real datasets, even though their theoretical results are stated for worst-case settings (see, e.g., \cite{9325943,moondra2021reusing,wan2022online,pmlr-v162-zhang22af}).
Numerical experiments with this semidefinite programming method can bridge this gap 
allowing to investigate tightness of theoretical bounds.
Numerical worst-case regret values can also be used to assess performance of algorithms and identify those worth studying theoretically.
Proof schemes can then be inferred from numerical values of the SDP's dual variables (see Remark~\ref{rem_proofs_from_PEP}).

We exploit and exemplify the approach to study OFW algorithms,
whose analyses remain relatively technical \cite{hazan2012projection,hazanIntroductionOnlineConvex2016} in the literature, and whose known regret bounds of $O(T^{3/4})$ currently remain somewhat surprising. The numerical findings in Section~\ref{s:numerics} strongly suggest that this rate is non-improvable.
As a bonus, Appendix~\ref{sec:PEP_for_OGD_and_FTRL} shows how to apply the same methodological tools to compute tight worst-case regret bounds for OGD and FTRL.

Among the strong points of the approach, it conceptually allows one to optimize the algorithm parameters for better worst-case regret values (through min-max formulation---optimizing worst-case guarantees). However, such problems are naturally non-convex (bilinear matrix inequalities~\cite{toker1995np}). %
This work tackles the OFW design PEP problem by providing a numerically-justified tight relaxation for it, which essentially follows from a few simplifications in the original problem along with an appropriate change of variables. 
This gives rise to a convex SDP that can efficiently be solved numerically.
Most importantly, the feasible points of this new program can still be re-interpreted as  
OFW tunings and proofs. Similar relaxation techniques were, to the best of our knowledge, not used before beyond unconstrained offline gradient-based optimization. 
However, it is well-known that parametric closed-form solutions to SDPs are highly non-trivial in general (see, Remark~\ref{rem_proofs_from_PEP}),
which is unfortunately the case for the design SDP we obtain for OFW.
As a result, the design PEP cannot be directly used to obtain closed-forms for the optimal OFW tuning and its proof, or for a worst-case instance.

To circumvent this issue, we show how to
adapt the SDP techniques to exploit/force structure (i.e., simplicity)
in the resulting proofs, which we demonstrate with a certain potential-based proof strategy for OFW (see Section~\ref{sec:theo_proof}).
Combining this with numerical insights in a constructive approach (see Section~\ref{sec:design_proof_theo}), 
we obtain an optimized regret bound of $1.74 L D T$ for a new variant of OFW
(improving on the previous best known bound of $8 L D T$ from \cite{hazanIntroductionOnlineConvex2016}),
along with its particularly simple and tight tuning for OFW.
We then adapt this proof to an anytime variant tuning of OFW (where $T$ is replaced by $t$ in parameters), which to the best of our knowledge is the first
direct regret guarantee for an anytime variant of OFW .

Finally, we present numerical PEP evaluations of the exact worst-case regret of OFW,
up to moderate horizon $T=100$, for Hazan's tuning~\cite{hazanIntroductionOnlineConvex2016,hazan2012projection}, 
our new tuning, 
and the numerical optimal tuning from the design SDP
(Section~\ref{s:numerics}).
Those numerics support the following claims:
(a)~The design PEP allows to construct a likely minimax optimal OFW algorithm, and observe that its regret rate is $\Theta(T^{3/4})$ (implying a regret rate of $\Omega(T^{3/4})$ for all OFW tunings).
(b)~Our (potential-based) bound for our version of OFW 
is not tight among non-potential-based bounds
(best possible bound is about 2/3 of ours).
Note that our theoretical bound for our OFW remains better than the best possible bound for Hazan’s OFW (and Hazan's bound is also suboptimal).
(c)~The tight numerical bounds for our anytime tuning and our horizon-dependent tuning of OFW are close to each other.
(d)~Allowing for multiple rounds of linear optimization per iteartion in~\eqref{eq:OFW_general} only helps the constant in the regret bounds. 
(e)~We can obtain optimized variations around the same theme, e.g., using $\beta_{t,s}=0$ all the way. This allows obtaining parameter-free algorithms, at the cost of worsened regret rates.

\section{\uppercase{Constructive approach to optimized regret bounds}}\label{sec:PEP_for_OFW}

In this section we present how to leverage semidefinite programming~\cite{vandenberghe1996semidefinite}
to jointly design and analyze OFW-type
algorithms~\eqref{eq:OFW_general} 
numerically in a variety of settings. Section~\ref{s:wc_regret_construction} is dedicated to the computation of regret bounds, and Section~\ref{s:opt_param} focuses on joint parameter optimization. 
 
\subsection{Worst-case regret bounds via semidefinite programming}\label{s:wc_regret_construction}

The core idea for obtaining worst-case regret bounds (and corresponding examples) consists in casting the problem of computing the worst regret for a sequence generated by~\eqref{eq:OFW_general} as an optimization problem, as follows (we omit the $(D,L)$ dependence of $B_T$ for readability):
\begin{multline}\label{eq:PEP}
B_T(\{(\eta_{t,s},\beta_{t,s},\gamma_{t,s})\}_{t,s})\triangleq\\
\begin{aligned}
\sup_{\substack{\cK, \{\ell_t\}_{t\in\llbracket 1,T \rrbracket}\\ 
			x_\star, \{x_t\}_{t\in\llbracket 1,T \rrbracket}\\
			d\in\mathbb{N}}} \,
\!\!\!\!\!\!\!\!\!\!\!\!\!\!\!\! & \,\,\,\,\,\,\,\,\,\,\,\,\,\,\,\,
R_T(x_1,\ldots,x_T; x_\star)\\
\text{s.t. } 
&  \ell_t \text{ is convex and $L$-Lipschitz for }t\in\llbracket 1,T \rrbracket,\\
& \cK \text{ is a non-empty closed convex set of $\mathbb{R}^d$,}\\ 
&\mathrm{Diam}(\cK)\leq D,\\
& \{x_t\}_{t=1,\ldots,T} \text{ is generated by~\eqref{eq:OFW_general}}.\\
\end{aligned}
\end{multline}
This kind of problem seeks the worst-case dimension $d\in\mathbb{N}$, convex domain $\domain$ with bounded diameter, and sequence of Lipschitz convex functions $\{\ell_t\}_{t\in\llbracket 1,T \rrbracket}$ 
that together produce the largest possible regret
when those losses are evaluated at points $\{x_t\}_{t\in\llbracket 1,T \rrbracket}$ compatible with~\eqref{eq:OFW_general}. 
In the optimization literature, \eqref{eq:PEP} is commonly referred to as a \emph{performance estimation problem} (PEP, as coined by~\cite{droriPerformanceFirstorderMethods2014}) which can be reformulated losslessly into a convex SDP~\cite{taylor2017exact}, as briefly outlined below and further detailed in Appendix~\ref{a:pep}.

In a nutshell,~\eqref{eq:PEP} is a priori an infinite-dimensional problem (e.g., it includes functional variables such as the losses $\ell_t$). 
A key idea to reformulate~\eqref{eq:PEP} as a tractable problem consists in sampling the losses $\ell_t$ at the query points $x_t$ and $x_\star$, 
and to treat only the responses $(\nabla \ell_t(x_t), \ell_t(x_t))$ and $(\nabla \ell_t(x_\star), \ell_t(x_\star))$ as variables. 
By appropriately constraining these responses, we can force them to be compatible with some losses $\ell_t$ satisfying the desired assumptions (convexity and Lipschitzness of $\ell_t$). 
With a bit more details, it suffices to require that the samples are compatible through the subgradient inequalities
$\ell_t(x_t) - \ell_t(x_\star) \leq \langle g_t, x_t - x_\star \rangle$ for $t=1,\cdots, T$ and the Lipschitz bounds $\|\nabla \ell_t(x_t)\|\leq L$.
In particular, the linearized loss $f_t(x) = \ell_t(x_t) + \langle \ell_t(x_t), x - x_t \rangle$ satisfies the same subgradient inequality as $\ell_t$ and is also $L$-Lipschitz, 
but it yields a larger regret because $\ell_t(x_t) - \ell_t(x_*) \;\le\; f_t(x_t) - f_t(x_*).$
Moreover, since the algorithm only observes its gradient at $x_t$, the linearized loss $f_t(x)$ is indistinguishable from $\ell_t$ for the algorithm (since $\ell_t(x_t) = f_t(x_t)$ and $\nabla \ell_t(x_t) = \nabla f_t(x_t)$).
Thus, the worst-case regret is achieved by linear losses, and we restrict our attention to those losses without loss of generality.
As for handling the domain $\domain$, one possibility is to sample its indicator function at the query points $v_t$ and impose similar compatibility constraints (see, e.g.,~\cite[Theorem 3.6]{taylor2017exact}). 
Finally, the sampled version of~\eqref{eq:PEP} can be lifted to an SDP via a standard change of variables: all vectors and gradients appearing in~\eqref{eq:PEP} are replaced with their Gram matrix (which, recall, encodes all pairwise inner products between these vectors / gradients). 
The objective, as well as all constraints, then become linear functions of the entries of this Gram matrix. A more complete exposition is provided in Appendix~\ref{a:pep}.

\begin{remark}[Obtaining proofs from~\eqref{eq:PEP}]
\label{rem_proofs_from_PEP}
A key feature of~\eqref{eq:PEP} is that it enables the construction of \emph{algorithm-dependent lower bounds} on the worst-case regret by solving semidefinite programs. 
That is, for given numerical values of $T,L,D$ and the algorithm parameters, one can compute a worst-case example by solving a tractable convex problem. 
In order to obtain \emph{algorithm-dependent upper bounds} on the worst-case regret, a natural procedure consists in formulating the \emph{Lagrange dual} of~\eqref{eq:PEP} (which is also a semidefinite program; see, e.g.,~\cite{vandenberghe1996semidefinite,boyd2004convex}), whose feasible points naturally corresponds to upper bounds on the regret. 
In this context, \emph{finding a proof} consists in finding a feasible point to the dual problem~\cite{goujaud2023fundamental}. 
While algebraic techniques exist for solving such parametric semidefinite programs in closed form, they typically suffer from exponential complexity in the problem size, number of variables/constraints, and number of parameters~\cite{basuAlgorithms,naldi2025solving}. This motivates the search for \emph{simpler/structured} proofs---e.g., potential-based methods~\cite{bansalPotentialFunctionProofsFirstOrder2019}---as exemplified in the proof of Theorem~\ref{thm_upper_bound_OFW}; see Section~\ref{sec:design_proof_theo} for further discussion.
\end{remark}

\subsection{Jointly optimising algorithm parameters and regret bounds}\label{s:opt_param}

A natural path forward is to use~\eqref{eq:PEP} to obtain worst-case optimal algorithms. That is, by solving\begin{align*} 
\min_{\{(\eta_{t,s},\beta_{t,s},\gamma_{t,s})\}_{t,s}} & B_T(\{(\eta_{t,s},\beta_{t,s},\gamma_{t,s})\}_{t,s}) \\[-0.2cm]
\text{ s.t. } & \sum_{s=1}^{t-1}\gamma_{t,s}\leq 1,\, \gamma_{t,s}\geq 0.
\end{align*}
This problem can be formulated as a linear optimization problem with a bilinear matrix inequality constraint (see Appendix~\ref{a:stepsizeopt}), which is unfortunately NP-hard in general~\cite{toker1995np}. For this reason, we propose a slight relaxation of $B_T(\{(\eta_{t,s},\beta_{t,s},\gamma_{t,s})\}_{t,s})$ which corresponds to removing a few constraints from~\eqref{eq:PEP} (that we numerically observed to be inactive). More precisely, this relaxation is obtained through: (i)~we observe that all $x_t$ for $t=2, \cdots, T$ are in the convex hull of $x_1, v_1, \cdots, v_{T-1}$, and thus the domain constraints for $\cK$ are imposed only on vectors $x_1, v_1, \cdots, v_{T-1}, x_\star$; (ii)~we only keep the boundary constraints corresponding to the optimality of $v_t$ compared with $v_{t+1}, \cdots, v_{T-1}, x_\star$:
\begin{multline*}
W_T\bigl(\{\eta_{t,s},\beta_{t,s},\gamma_{t,s}\}_{t,s}\bigr)
\triangleq\\[-0.05cm]
\begin{aligned}
\sup_{\substack{\{g_t\}_{t \in \llbracket 1,T \rrbracket},\, x_\star, d\in\mathbb{N}\\ 
			\{(x_t, v_t, \mathrm{dir}_t)\}_{t \in \llbracket 1,T \rrbracket}
			}} \,
\!\!\!\!\!\!\!\!\! & \,\,\,\,\,\,\,\,\, \quad
\sum_{t=1}^T \langle g_t, x_t - x_\star \rangle \\
\text{s.t. } 
& \{(x_t, \mathrm{dir}_t)\}_{t \in \llbracket 1,T \rrbracket} \text{ compatible with~\eqref{eq:OFW_general}},\\
& \langle -\mathrm{dir}_t, u - v_t \rangle \leq 0 \text{ for all }t \in \llbracket 1,T-1 \rrbracket \\
& \qquad\qquad 	
	\text{ and } u\in\{v_{t+1},\cdots, v_{T-1}, x_\star\}, \\ 
&\mathrm{Diam}(\{ x_1, x_\star, v_1, \cdots, v_{T-1} \})\leq D,\\
&  \Vert g_t \Vert \leq L \text{ for }t=1,\ldots,T,\\
\end{aligned}
\end{multline*}
where the line ``\emph{$\{(x_t, \mathrm{dir}_t)\}_{t \in \llbracket 1,T \rrbracket}$ compatible with~\eqref{eq:OFW_general}}'' means that $x_t$ and $\mathrm{dir}_t$ can be substituted by their expressions in~\eqref{eq:OFW_general}, leading to $W_T\bigl(\{\eta_{t,s},\beta_{t,s},\gamma_{t,s}\}_{t,s}\bigr)\geq B_T\bigl(\{\eta_{t,s},\beta_{t,s},\gamma_{t,s}\}_{t,s}\bigr)$. For this problem, an appropriate change of variables allows us to cast
\begin{equation}\label{eq:jointstepsizeopt}
\begin{aligned}
     \min_{\{(\eta_{t,s},\beta_{t,s},\gamma_{t,s})\}_{t,s}} & W_T(\{(\eta_{t,s},\beta_{t,s},\gamma_{t,s})\}_{t,s}) \\[-0.2cm]
     \text{ s.t. } & \sum_{s=1}^{t-1}\gamma_{t,s}\leq 1,\, \gamma_{t,s}\geq 0
\end{aligned}
\end{equation}
as a convex semidefinite program again (details in Appendix~\ref{a:stepsizeopt}). Numerical results illustrating the performance of the numerically optimized methods are provided in~Figure~\ref{fig:Comparison_tau}; examples of numerically optimized parameters are further provided in Appendix~\ref{a:numerical_stepsizeopt}.

\section{\uppercase{Tuning and regret bound for online Frank--Wolfe}}
\label{sec:theo_proof}

In this section, we analyze the simple OFW scheme (Algorithm~\ref{OFW_alg_new}) with fixed parameters
using PEPs to search for simpler potential-based proofs.
In Section~\ref{sec:proof_pot_theo}, we prove a regret upper bound for our optimized
tuning of OFW.
Section~\ref{sec:theo_anytime} presents the extension to the anytime setting.
Section~\ref{sec:design_proof_theo} explains how the SDP formulations were leveraged to obtain the proof of Section~\ref{sec:proof_pot_theo} and the optimal tuning~\eqref{eq:parameter_choice}.

\vspace{-5pt}
\begin{algorithm}[H]
\caption{Online Frank--Wolfe algo. (fixed $\eta,\sigma$)}
\label{OFW_alg_new}
\begin{algorithmic}[1]
\Require $T\geq 1$,~ $x_1 \in \cK$, $\eta \geq 0$, $\sigma \in (0,1)$
\For{$t=1$ to $T$} 
    \State Play $x_t$, pay cost $\ell_t(x_t)$,  observe $g_t = \nabla\ell_t(x_t)$.
    \State $\mathrm{dir}_t \gets \eta \sum_{s=1}^t g_s + (x_t - x_1)$
    \State $v_t \gets \argmin_{v\in \cK} \langle \mathrm{dir}_t , v \rangle$
    \State $x_{t+1} \gets (1-\sigma) x_t + \sigma v_t$
\EndFor
\end{algorithmic}
\end{algorithm}
\vspace{-15pt}

\subsection{Simple tuning and regret bound}
\label{sec:proof_pot_theo}

Throughout this section, we fix \vspace*{-5pt} 
\begin{equation}
        {\eta=\frac{D}{2 L} \left(\frac{3}{T}\right)^{3/4} \quad \text{ and } \quad \sigma = \min\left(1, \sqrt{\frac{3}{T}}\right) }.
    \label{eq:parameter_choice}
\end{equation} 
We assume the time horizon is at least $T \ge 3$.
Note that tuning~\eqref{eq:parameter_choice} and 
the proof of the following theorem 
were obtained as the optimal solution of a 
design problem using PEPs for potential-based proofs, as explained in Section~\ref{sec:design_proof_theo}.

\begin{theorem}\label{thm_upper_bound_OFW}
Fix $T\geq 3$.
Assume that the cost functions $\ell_t$ are convex and $L$-Lipschitz for all $t\in \llbracket 1, T \rrbracket$,
and that the convex closed domain $\cK$ of feasible points has a diameter bounded by $D$.
Then, for any $x_\star \in\cK$, the following upper bound 
on the regret of the online Frank--Wolfe Algorithm~\ref{OFW_alg_new}, with parameters defined in~\eqref{eq:parameter_choice}, holds:
\begin{align*}
R_T \! & \leq \frac{2 D}{L 3^{3/4} T^{1/4}} \sum_{t=1}^T \Vert g_t \Vert^2
	+ \frac{L}{D 3^{3/4} T^{1/4}} \sum_{t=1}^T \Vert x_t - v_t \Vert^2 \\
& \qquad\qquad\qquad\qquad\qquad\!
	+ \frac{L T^{3/4}}{D 3^{3/4} } \Vert x_\star - x_1 \Vert^2 ,
\end{align*}
and in particular,  $R_T \leq \frac{4}{3^{3/4}} L D T^{3/4} < 1.76 L D T^{3/4}$.
\end{theorem}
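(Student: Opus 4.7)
My plan is a potential-based regret analysis tailored to the OFW update rule. The key auxiliary object is the quadratically-regularized cumulative linear objective
\[f_t(x) \;:=\; \eta \sum_{s=1}^t \langle g_s, x\rangle + \tfrac12 \|x - x_1\|^2,\]
which is both $1$-smooth and $1$-strongly convex, and whose gradient at $x_t$ is precisely $\mathrm{dir}_t$ by construction of Algorithm~\ref{OFW_alg_new}. Hence $v_t$ is exactly the Frank--Wolfe vertex of $f_t$ at $x_t$, and $x_{t+1} = (1-\sigma) x_t + \sigma v_t$ is a single FW step of size $\sigma$ on $f_t$. Convexity of $\ell_t$ reduces the functional regret to its linear surrogate $\sum_{t=1}^T \langle g_t, x_t - x_\star\rangle$, which is the quantity I would ultimately bound.

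Next, I would introduce a potential $\Phi_t$ that combines the running linear regret with an appropriately weighted $f_t$-suboptimality correction --- schematically $\Phi_t = \eta \sum_{s=1}^t \langle g_s, x_s - x_\star\rangle + \alpha [f_t(x_{t+1}) - f_t(x_\star)]$ for a specific constant $\alpha$, possibly supplemented by an $\|x_{t+1} - x_\star\|^2$ term. The weights should be chosen so that (a) $\Phi_0$ produces exactly the $\tfrac{1}{2\eta}\|x_\star - x_1\|^2$ coefficient of the statement and (b) the terminal term $\alpha[f_T(x_{T+1}) - f_T(x_\star)]$ has a favorable sign (e.g.\ by comparing to $\min_{\cK} f_T$) so that $\Phi_T$ upper-bounds $\eta R_T$. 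The per-step increment $\Phi_t - \Phi_{t-1}$ can then be controlled by three ingredients: (i) the recursion $f_t = f_{t-1} + \eta \langle g_t, \cdot\rangle$, (ii) the FW smoothness inequality $f_t(x_{t+1}) \leq f_t(x_t) + \sigma \langle \mathrm{dir}_t, v_t - x_t\rangle + \tfrac{\sigma^2}{2}\|v_t - x_t\|^2$, and (iii) the FW optimality $\langle \mathrm{dir}_t, v_t - x_\star\rangle \leq 0$, which eliminates the explicit $x_\star$-dependence. A Young inequality on any remaining cross term between $\|g_t\|$ and $\|v_t - x_t\|$ would yield a per-step bound of the form $\Phi_t - \Phi_{t-1} \leq c_1 \eta^2 \|g_t\|^2 + c_2 \sigma^2 \|v_t - x_t\|^2$.

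Telescoping over $t = 1, \ldots, T$ and using the boundary values should then produce an inequality of the form
\[R_T \leq \tfrac{1}{2\eta}\|x_\star - x_1\|^2 + \tfrac{c_1 \eta}{\sigma} \sum_{t=1}^T \|g_t\|^2 + \tfrac{c_2 \sigma^2}{\eta} \sum_{t=1}^T \|v_t - x_t\|^2.\]
Plugging in $\eta = \tfrac{D}{2L}(3/T)^{3/4}$ and $\sigma = \sqrt{3/T}$ reproduces the three stated coefficients (in particular $\tfrac{1}{2\eta}$ matches $\tfrac{L T^{3/4}}{D\, 3^{3/4}}$ exactly). The closing estimate $R_T \leq \tfrac{4}{3^{3/4}} L D T^{3/4}$ is then immediate from $\|g_t\| \leq L$, $\|v_t - x_t\| \leq D$, and $\|x_\star - x_1\| \leq D$, the three contributions summing to $(2+1+1)/3^{3/4} \cdot L D T^{3/4}$.

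The main obstacle is pinning down the precise weights in $\Phi_t$ that yield the sharp constants of the statement (the factors $4/3$ and $1/6$ implicit in the per-step bound). The paper explicitly notes that these were obtained by optimizing a potential-based proof via semidefinite programming, following the constructive approach of Section~\ref{sec:design_proof_theo}; guessing them a priori is nontrivial, but once posited, verifying the per-step inequality reduces to careful but routine algebra. A secondary concern is orienting the boundary terms so that $\Phi_T$ indeed upper-bounds $\eta R_T$ (rather than lower-bounding it), which in turn constrains the admissible signs and magnitudes of $\alpha$.
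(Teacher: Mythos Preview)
Your overall plan --- a potential-based analysis exploiting that $f_t$ is $1$-smooth and $1$-strongly convex, with $x_{t+1}$ a single FW step on $f_t$ --- is the right framework, and your dimensional reverse-engineering of the coefficients ($c_1=4/3$, $c_2=1/6$) is correct. However, the specific potential you propose differs from the paper's in a way that matters.

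The paper does \emph{not} anchor the potential at $x_\star$. Instead it introduces the FTRL iterates $y_{t+1} = \argmin_{y\in\cK} f_t(y)$ and sets
\[
\phi_t \;=\; \sum_{s=1}^t \langle g_s, x_s - y_{t+1}\rangle \;+\; \tfrac{1}{6\eta}\,\|x_{t+1}-y_{t+1}\|^2 \;-\; \tfrac{1}{2\eta}\,\|y_{t+1}-x_1\|^2.
\]
The comparator $x_\star$ enters only once, at the very end, via the optimality of $y_{T+1}$: since $y_{T+1}$ minimizes $f_T$ over $\cK$, one has $\sum_t \langle g_t, x_t - x_\star\rangle - \tfrac{1}{2\eta}\|x_\star - x_1\|^2 \le \phi_T$. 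This is precisely what makes your ``secondary concern'' about orienting the terminal term disappear: with $y_{T+1}$ in place of $x_\star$, the extra $\tfrac{1}{6\eta}\|x_{T+1}-y_{T+1}\|^2$ is nonnegative by construction.

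Your $x_\star$-based potential, by contrast, hits a real obstruction. Carrying out your steps (FW descent, FW optimality against $x_\star$, strong convexity of $f_t$) leaves a residual $-\alpha\sigma\,[f_{t-1}(x_t)-f_{t-1}(x_\star)]$ in the per-step increment. That quantity has no definite sign --- nothing forces $x_\star$ near the minimizer of $f_{t-1}$ --- and it can be of order $\eta t L D$; summed over $t$ it is the same order as the target bound, so it cannot simply be dropped. Adding an $\|x_{t+1}-x_\star\|^2$ correction does not cure this; replacing $x_\star$ by the moving FTRL point $y_{t+1}$ does, and that is the paper's choice.

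A second point: the per-step inequality in the paper (Lemma~\ref{lemma_upper_bound_OFW}) is more delicate than ``Young on a cross term''. It uses the first-order optimality of \emph{both} $y_t$ and $y_{t+1}$ (not just of $v_t$), combines them to obtain $\langle x_t - y_{t+1},\, v_t - y_{t+1}\rangle \le 0$ and $\langle g_t,\, y_{t+1}-y_t\rangle \le -\tfrac{1}{\eta}\|y_t - y_{t+1}\|^2$, and then introduces an auxiliary weight $\lambda\ge 0$ whose value is fixed by requiring a certain quadratic form $Q(\lambda)$ in $(x_t-y_{t+1},\, y_t-y_{t+1})$ to be positive semidefinite. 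The existence of such a $\lambda$ reduces to a discriminant condition that holds exactly for the stated $(\eta,\sigma)$ and the weight $1/(6\eta)$. So the ``routine algebra'' you anticipate is where the sharp constants actually live, and it is tied to the FTRL-coupled potential rather than to an $x_\star$-based one.
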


Inspired by~\cite{bansalPotentialFunctionProofsFirstOrder2019},
see also~\cite{taylor2019stochastic,karimi2017single},
we use a potential-based proof to prove Theorem~\ref{thm_upper_bound_OFW},
which relies on the next lemma.
Motivated by the fact that Algorithm~\ref{OFW_alg_new} is seen as
an approximation of FTRL (see~\cite[Chapter 7]{hazanIntroductionOnlineConvex2016}), 
the next lemma will consider a potential that 
relates the OFW iterates to the iterates of FTRL, which we display in Algorithm~\ref{FTRL_alg}. To make the connection, it is necessary that OFW and FTRL encounter the same gradients. To ensure that, we formulate the below lemma for linear loss functions.

\begin{algorithm}[H]
\caption{Follow The Regularized Leader (FTRL)}
\label{FTRL_alg}
\begin{algorithmic}[1]
\Require $T\geq 1$,~ $y_1 \in \cK$, $\eta \geq 0$
\For{$t=1$ to $T$} 
    \State Play $y_t$, pay cost $\ell_t(y_t)$,  observe $g_t = \nabla\ell_t(y_t)$.
    \State $y_{t+1} \gets \argmin_{y\in \cK} \eta \langle  \sum_{s=1}^t g_s , y \rangle + \frac{1}{2} \Vert y - x_1 \Vert^2 $
\EndFor
\end{algorithmic}
\end{algorithm}

\begin{lemma}\label{lemma_upper_bound_OFW}
Let $T\geq 3$.
Denote by $\ell_t : x \mapsto \langle g_t, x \rangle$ 
the linear cost function at time $t\in \llbracket 1, T \rrbracket$.
Assume that $\Vert g_t \Vert \leq L$ for all $t\in \llbracket 1, T \rrbracket$
and that the convex closed domain $\cK$ of feasible points has a diameter bounded by~$D$.
Define the sequence of potentials $(\phi_t)_{0\leq t \leq T}$ as:
\begin{align*}
\phi_t \triangleq \sum_{s=1}^t \langle g_s, x_s - y_{t+1} \rangle & - \frac{1}{2\eta} \Vert y_{t+1} - x_1 \Vert^2 \\
	& + \frac{1}{6\eta} \Vert x_{t+1} - y_{t+1}\Vert^2  .
\end{align*}
Then, Algorithm~\ref{OFW_alg_new} and~\ref{FTRL_alg}, run with parameters in~\eqref{eq:parameter_choice} and initialized in $y_1 = x_1 \in \cK$, satisfy for all $  t\in \llbracket1, T\rrbracket$,
\begin{equation*}
   \phi_t - \phi_{t-1} 
\leq \frac{2 D}{L 3^{3/4} T^{1/4}} \Vert g_t\Vert^2 
	+ \frac{L}{ D 3^{3/4}  T^{1/4}} \Vert x_t - v_t \Vert^2 . \end{equation*}
\end{lemma}

Let us remark that the potential used in Lemma~\ref{lemma_upper_bound_OFW}
is inspired by the potential $\psi_t$ used to obtain the optimal upper bound
on the regret of FTRL 
(which gives a reformulation of classical FTRL regret proofs; 
see Appendix~\ref{a:proof_pot_FTRL} and, e.g.,~\cite[Chapters 6--7]{orabona2019modern}), 
where  $\psi_t$ is defined as:
\begin{equation}
\label{eq_def_pot_FTRL}
\psi_t \triangleq \sum_{s=1}^t \langle g_s, y_s - y_{t+1} \rangle 
            - \frac{1}{2\eta} \Vert y_{t+1} - y_1 \Vert^2 . 
\end{equation}

The proof of Lemma~\ref{lemma_upper_bound_OFW} is quite technical
and we defer it to Appendix~\ref{a:proof_lemma_upper_bound}.
In Section~\ref{sec:design_proof_theo}, 
we discuss how we designed the proof of Lemma~\ref{lemma_upper_bound_OFW}
and jointly obtained the optimal choice of 
the OFW parameters~\eqref{eq:parameter_choice}
and of the potential function $\phi_t$ used in Lemma~\ref{lemma_upper_bound_OFW}.

We now prove Theorem~\ref{thm_upper_bound_OFW}
which relies on Lemma~\ref{lemma_upper_bound_OFW}.

\begin{proof}[Proof of Theorem~\ref{thm_upper_bound_OFW}]
First, using convexity of the cost functions $\ell_t$
(recall $g_t = \nabla\ell_t(x_t)$), we get:\vspace*{-4pt} 
\begin{equation*}
R_T \leq \sum_{t=1}^T \langle g_t, x_t - x_\star \rangle. 
\end{equation*}
Then, summing the inequalities from Lemma~\ref{lemma_upper_bound_OFW}
applied to the linearized cost functions $\tilde\ell_t : x \mapsto \langle g_t, x\rangle$,
and observing that $\phi_0 = 0$ as $y_1=x_1$, we get:\begin{equation*}
\phi_T \leq \frac{2 D}{L 3^{3/4} T^{1/4}} \sum_{t=1}^T \Vert g_t \Vert^2
	+ \frac{L}{D 3^{3/4}  T^{1/4}} \sum_{t=1}^T \Vert x_t - v_t \Vert^2 .
\end{equation*}
Throughout the proof, we will repeatedly use the fact that the optimum of a constrained convex optimization problem 
$
x_* \in \arg\min_{x \in \mathcal{K}} f(x)
$
satisfies $\langle \nabla f(x_*), x_* - x \rangle \leq 0$ for all $x \in \mathcal{K}$. Using the optimality from the definition of $y_{T+1}$, we get:\vspace*{-5pt} 
\begin{multline*}
\sum_{t=1}^T \langle g_t, x_t - x_\star \rangle 
	- \frac{1}{2\eta} \Vert x_\star - x_1 \Vert^2\\
\leq \sum_{t=1}^T \langle g_t, x_t - y_{T+1} \rangle 
	- \frac{1}{2\eta} \Vert y_{T+1} - x_1 \Vert^2
\leq \phi_T .
\end{multline*}
Combining those three inequalities (recall $\eta = \frac{ 3^{3/4}D}{2 L T^{3/4}}$), we get:\vspace*{-6pt} 
\begin{align*}
R_T 
\leq \frac{2 D}{L 3^{3/4} T^{1/4}} \sum_{t=1}^T \Vert g_t \Vert^2
	& + \frac{L}{D 3^{3/4} T^{1/4}} \sum_{t=1}^T \Vert x_t - v_t \Vert^2 \\
	& + \frac{L T^{3/4}}{D 3^{3/4}} \Vert x_\star - x_1 \Vert^2.
\end{align*}
Hence, as $\Vert g_t \Vert \leq L$ 
and $\Vert x_t - v_t  \Vert \leq D$ for all $t\in \llbracket 1,T \rrbracket$
and $\Vert x_\star - x_1 \Vert \leq D$,
we get that $R_T \leq \frac{4}{3^{3/4}} L D T^{3/4}$.
This concludes the proof of Theorem~\ref{thm_upper_bound_OFW}.
\end{proof}

\subsection{Extension to an anytime OFW tuning}
\label{sec:theo_anytime}

The proof of Theorem~\ref{thm_upper_bound_OFW} can also
be adapted to an anytime version of the online Frank--Wolfe algorithm~\ref{OFW_alg_new} 
shown in Algorithm~\ref{OFW_alg_new_anytime} 
where the parameters $\eta_t$ and $\sigma_t$ 
for each time round $t$ are obtained by replacing the horizon $T$ by $t$ in the tuning~\eqref{eq:parameter_choice}, that is:
\begin{equation}\label{eq:parameter_choice_anytime}
\eta_t = \frac{D}{2 L} \left( \frac{3}{t} \right)^{3/4}
\quad \text{and} \quad
\sigma_t = \min\left(1, \sqrt{\frac{3}{t}} \right) .
\end{equation}
Note that parameters $\eta_t$ and $\sigma_t$ are anytime:
they depend on the time round $t$ but not on the horizon $T$.

\begin{algorithm}[H]
\caption{Anytime online Frank--Wolfe algorithm} \label{OFW_alg_new_anytime}
\begin{algorithmic}[1]
\Require  $x_1 \in \cK$, $\eta_t \geq 0$ and $\sigma_t \in (0,1)$
	\text{ for $t \geq 2$}
\For{$t=1$ to $T$ (or $\infty$)} 
    \State Play $x_t$, pay cost $\ell_t(x_t)$, observe $g_t = \nabla\ell_t(x_t)$.
    \State $\mathrm{dir}_t \gets \eta_{t+1} \sum_{s=1}^t g_s + (x_t - x_1)$
    \State $v_t \gets \argmin_{v\in \cK} \langle \mathrm{dir}_t , v \rangle$
    \State $x_{t+1} \gets (1-\sigma_{t+1}) x_t + \sigma_{t+1} v_t$
\EndFor
\end{algorithmic}
\end{algorithm}

Adapting the proof of Theorem~\ref{thm_upper_bound_OFW}
to the anytime setting of Algorithm~\ref{OFW_alg_new_anytime},
we get the following theorem, whose detailed proof can be found
in Appendix~\ref{sec:proof_OFW_anytime}.

\begin{theorem}\label{thm_upper_bound_OFW_anytime}
Assume that the cost functions $\ell_t$ are convex and $L$-Lipschitz,
and that the convex closed domain $\cK$ of feasible points has a diameter bounded by $D$.
Then, for any $x_\star \in\cK$, the following upper bound 
on the regret of the anytime online Frank--Wolfe Algorithm~\ref{OFW_alg_new_anytime}, with parameters defined in~\eqref{eq:parameter_choice_anytime}, holds (simultaneously for all $t$):
\begin{equation*}
\forall t \geq 1,\quad 
R_t \leq \frac{5}{3^{3/4}} L D t^{3/4} < 2.20 L D t^{3/4}.
\end{equation*}
\end{theorem}

\subsection{How we got the proof of Theorem~\ref{thm_upper_bound_OFW}}
\label{sec:design_proof_theo}

As explained in Remark~\ref{rem_proofs_from_PEP},
\eqref{eq:PEP} can be used to obtain rigorous regret upper bounds and their proofs for Algorithm~\ref{OFW_alg_new}. However, the algebraic structure of the problem to be solved proved itself quite challenging. In particular, we did not manage to extract a simple solution from~\eqref{eq:PEP}. Therefore, we restricted ourselves to search for optimized algorithms within the set of algorithms with simple structured proofs. A classical template for such proofs is that of relying on the construction of \emph{potential} (or Lyapunov) functions (see, e.g.,~\cite{bansalPotentialFunctionProofsFirstOrder2019} for a nice introduction). We thereby adapted~\eqref{eq:PEP} to help searching for appropriate potential functions.

The idea of a potential-based proof is to define a sequence of potentials
$(\phi_t)_{0\leq t \leq T}$ such that (i)~$R_T \leq \phi_T + A$
and (ii)~we can prove inequalities $\phi_t - \phi_{t-1} \leq B_t$ for all $t$,
where $A$ and $B_t$ are constants which can only depend on $L$, $D$, $t$ and $T$.
Then, we immediately obtain an upper bound on the regret $R_T$ as
$\smash{R_T \leq \phi_0 + A + \sum_{t=1}^T \phi_t - \phi_{t-1}
\leq \phi_0 + A + \sum_{t=1}^T B_t}$.
Under some good choice of potentials $\phi_t$,
each potential difference $\phi_t - \phi_{t-1}$
depends only on a small number of vectors, gradients
and sum of past gradients
(for instance, we will use $x_t$, $x_{t+1}$, $y_t$, $y_{t+1}$, $g_t$ 
and $\smash{G_{t-1} = \sum_{s=1}^{t-1} g_s}$). In other words, (ii)~consists in studying a single iteration of the procedure while (i)~ensures that we can combine those one-iteration analyses to form the global bound.

In such proofs, the first step is therefore to understand what information needs to be summarized in $\phi_t$. 
Here, motivated by the fact that Algorithm~\ref{OFW_alg_new} is seen as
an approximation of FTRL (see~\cite[Chapter 7]{hazanIntroductionOnlineConvex2016}), and by the
potential functions used for upper bounding FTRL's regret (see~\eqref{eq_def_pot_FTRL}, which is a reformulation of classical FTRL regret proofs; see, e.g.,~\cite[Chapters 6--7]{orabona2019modern}),
we tried the family of potentials parametrised by $a$ and $b$ as\vspace*{-3pt}
\begin{equation}
\label{eq_def_general_pot}
\begin{aligned}
\phi_t = 
& \sum_{s=1}^t \langle g_s, x_s - y_{t+1} \rangle
	- \frac{1}{2\eta} \Vert y_{t+1} - x_1 \Vert^2 \\
         & + a\, \Vert x_{t+1} - y_{t+1}\Vert^2
        + b\, \eta\, \langle G_t, x_{t+1} - y_{t+1} \rangle\\
        & + \frac{b}{2} \bigl( \Vert  x_{t+1} - x_1 \Vert^2 
                        - \Vert  y_{t+1} - x_1 \Vert^2 \bigr)
        .
\end{aligned}
\end{equation}
Note that $\phi_t - \phi_{t-1}$ is \emph{autonomous}: it is a function of $t$ only through its dependence in $x_{t}, x_{t+1}, y_t, y_{t+1}, g_t, G_{t-1}, x_1$. 
Because of this fact,
and as $x_{t+1}$, $y_t$ and $y_{t+1}$ are obtained
as autonomous functions of
$x_t$, $g_t$, $G_{t-1}$ and $x_1$ using 
Algorithm~\ref{OFW_alg_new} and FTRL,
it is possible to use the same proof for upper bounding $\phi_t - \phi_{t-1}$
for all iteration times $t$
(then, we get that $B_t$ is also autonomous,
which imposes that $B_t = B$ is a constant that depends only on $L$, $D$ and $T$).
Thus, we can use only $1$ iteration
of upper bounding $\phi_t - \phi_{t-1} \leq B$
for an abstract $t$ instead of doing an upper bound for each $t$
individually.
We also note that $\phi_0 = 0$ and $R_T - \frac{1}{2\eta}\Vert x_\star - x_1\Vert^2 \leq \phi_T$ for $a\geq 0$ and $b\geq 0$
(using optimality of $y_{T+1}$).

To jointly design the proof of Algorithm~\ref{OFW_alg_new}
(with fixed $\eta$ and $\sigma$)
and the potential~\eqref{eq_def_general_pot},
we use the following $1$-iteration variant of \eqref{eq:PEP}:\vspace*{-3pt}
\begin{equation}\label{eq:PEP-potential}
\begin{aligned}
\inf_{ a\geq 0, b\geq 0} \!\!\!\!
\sup_{\substack{ \cK,\, d\in\mathbb{N}\\
    g_t,\, G_{t-1} \\
    x_1,\, x_t,\, v_t,\, x_{t+1} \\ y_t,\, y_{t+1}  }} \,
\!\!\!\! 	\!\!\!\!\!\!\!\! & \,\,\,\,\,\,\,\,
 \phi_t - \phi_{t-1} \\
\text{s.t. } 
& \phi_t - \phi_{t-1} \text{ is generated by \eqref{eq_def_general_pot} from }\\ & \qquad
    \{x_t, x_{t+1}, y_t, y_{t+1}, g_t, G_{t-1}, x_1\} , \\ &  \Vert g_t \Vert \leq L ,\\
& \cK\subset\mathbb{R}^d \text{ is non-empty, closed, convex}, \\ 
&\mathrm{Diam}(\{x_1, x_t, v_t, x_{t+1}, y_t, y_{t+1} \})\leq D,\\
& (x_{t+1}, v_t) \text{ are generated from }\\ & \qquad \{x_1, x_t, g_t, G_{t-1}\} \text{ by Algorithm~\ref{OFW_alg_new}},\\
& y_t \text{ and } y_{t+1} \text{ are generated from }\\ & \qquad \{x_1, g_t, G_{t-1}\} \text{ by FTRL}.\\[-8pt]
\end{aligned}
\end{equation}
Note that \eqref{eq:PEP-potential} can be rewritten as
a semidefinite program whose size does not depend on $T$,
which allows efficient numerical solving even for large values of $T$.

Then, numerically solving \eqref{eq:PEP-potential} for fixed $\eta$ and $\sigma$
gives $b=0$ in~\eqref{eq_def_general_pot} and the values of Lagrange multipliers of the constraints
gives the proof structure of Lemma~\ref{lemma_upper_bound_OFW}
(with literal values for $\eta$, $\sigma$, $a$, 
and for two Lagrange multipliers)
up to the final step of verifying that the remaining terms 
form a sum of squares.
Hence, we are left with finding the optimal $\eta$ and $\sigma$
minimizing the regret upper bound given by~\eqref{eq:PEP-potential}
with the constraint that the remaining terms form a sum of squares,
which is a non-convex problem. We then relax this non-convex problem by keeping
only the leading order terms in the sum of squares constraint,
which we can then solve algebraically, resulting in the choices 
\begin{align*}
    &\eta = \frac{D3^{3/4}}{2LT^{3/4}}, \quad \sigma = \frac{\sqrt{3}}{\sqrt{T}}, \quad \text{and}\quad a = \frac{1}{6\eta},     \end{align*}
thereby concluding the construction of the result of Lemma~\ref{lemma_upper_bound_OFW}.
The details for rewriting~\eqref{eq:PEP-potential} as a semidefinite program,
numerically solving it, and then for obtaining the optimal parameters above
can be found in Appendix~\ref{a:proof_design}.

\section{\uppercase{Numerical results: tight regret bounds, direct stepsize optimization}}\label{s:numerics}

In this section, we leverage the techniques of Section~\ref{sec:PEP_for_OFW} to provide strong numerical evidence on the regret behaviors of different variations of online Frank--Wolfe-type algorithms.

The numerical experiments of this section rely on the CVXPY~\cite{JMLR:v17:15-408} modeling language used in combination with the MOSEK semidefinite solver~\cite{aps2019mosek} for solving~\eqref{eq:jointstepsizeopt}. For computing~\eqref{eq:PEP} we directly implemented the online algorithms within the PEPit software~\cite{goujaudPEPitComputerassistedWorstcase2024}.
In all those numerical experiments, we used $L=D=1$.
Those numerical experiments were performed on a
MacBook Pro 14" with M3 Pro SoC and 36GB of RAM
in a few tens of minutes for the largest time horizon
and in a few minutes for other values
(see Appendix~\ref{a:numerical_computation_times} for more details on the computational times),
note that RAM size was the main limiting factor.
(Our code used for those numerical experiments is available at 
\url{https://github.com/JulienWeibel/Optimized-projection-free-algorithms-for-online-learning-construction-and-worst-case-analysis}.)
Those numerical experiments strongly support the following claims, see Figure~\ref{fig:Comparison_tau}.
In Figure~\ref{fig:Comparison_tau}, we also compare the tight numerical bounds obtained from PEPs to known upper bounds for OFW, referring to their theorems in the caption of Figure~\ref{fig:Comparison_tau}.

\begin{figure*}[!t]
\begin{subfigure}{\textwidth}
    \centering

        \hfill
    \begin{minipage}[t]{0.48\textwidth}
        \vspace*{0pt}
        \centering
\begin{tikzpicture}

\definecolor{crimson2143940}{RGB}{214,39,40}
\definecolor{forestgreen4416044}{RGB}{44,160,44}
\definecolor{mediumpurple148103189}{RGB}{148,103,189}
\definecolor{darkgray176}{RGB}{176,176,176}
\definecolor{darkorange25512714}{RGB}{255,127,14}
\definecolor{lightgray204}{RGB}{204,204,204}
\definecolor{steelblue31119180}{RGB}{31,119,180}

\begin{axis}[
legend cell align={left},
legend style={
  fill opacity=1,
  draw opacity=1,
  text opacity=1,
  at={(0.5,-\legendDist)},
  anchor=north,
  draw=none
},
tick align=outside,
tick pos=left,
x grid style={darkgray176},
xlabel={Time horizon $T$},
xmin=-5, xmax=105,
xtick style={color=black},
y grid style={darkgray176},
ylabel={Worst-case regret},
ymin=-2, ymax=55,
ytick style={color=black},
  width=\figWidth,
  height=\figHeight,
  scale only axis,
 font=\small
]
\addplot [thick, crimson2143940, mark=asterisk, mark size=1, mark options={solid}]
table {%
1 8
2 13.4543426440594
3 18.2360564556382
4 22.6274169979695
5 26.7496121990569
6 30.6692690038211
7 34.4281365652708
8 38.0546276800871
10 44.9873060152279
15 60.9759297785538
20 75.6593287202541
25 89.4427190999916
30 102.548881535096
35 115.117412732146
40 127.243316602728
45 138.995062234592
50 150.424123723456
55 161.570475875318
60 172.46597374228
65 183.136539862808
70 193.603639399487
75 203.885309381655
80 213.996897592455
90 233.760899142071
100 252.98221281347
};
\addlegendentry{{\legendEntryFontSize \cite[Algo.~27]{hazanIntroductionOnlineConvex2016}: bound~\cite[Theorem 7.3]{hazanIntroductionOnlineConvex2016}}}
\addplot [thick, crimson2143940, dashed, mark=asterisk, mark size=1, mark options={solid}]
table {%
1 0.999999978829868
2 1.86602540835967
3 2.86602530022636
4 3.83521194338494
5 4.84566679047157
6 5.77847121423904
7 6.64296540269502
8 7.47558796478797
10 9.06556940344561
15 12.7147146818818
20 16.0829251259494
25 19.2687001005565
30 22.3084311637338
35 25.2313421308435
40 28.0532078096698
45 30.7885720243218
50 33.4461650859791
55 36.0337573951143
60 38.5568065885238
65 41.02483254
70 43.44430215
75 45.8196833
80 48.15440388
90 52.7148001953334
100 57.147182191147
};
\addlegendentry{{\legendEntryFontSize \cite[Algo.~27]{hazanIntroductionOnlineConvex2016}: tight bound $B_T$ from~(\ref{eq:PEP})}}
\addplot [semithick, forestgreen4416044, mark size=3, mark options={solid}]
table {%
1 1.75476535060332
2 2.95115178586752
3 4
4 4.9632259152112
5 5.86741157862262
6 6.72717132202972
7 7.55166264132207
8 8.34711776039087
10 9.8677707265638
15 13.3748060995284
20 16.5955460610261
25 19.6188730425514
30 22.493653007614
35 25.2505058891838
40 27.9102703837895
45 30.4879648892769
50 32.9948800255984
55 35.4397840933123
60 37.829664360127
65 40.17020682258
70 42.466119771115
75 44.7213595499958
80 46.939292628981
90 51.2744407675481
100 55.4905526705042
};
\addlegendentry{{\legendEntryFontSize Algo.~\ref{OFW_alg_new}: bound from Theorem~\ref{thm_upper_bound_OFW}}}
\addplot [semithick, forestgreen4416044, dashed, mark=x, mark size=2, mark options={solid}]
table {%
1 0.99999997622009
2 1.86602536857043
3 2.79508504207169
4 3.41695016277647
5 4.00578768309647
6 4.56827781991907
7 5.10944325015655
8 5.63752492727876
10 6.66119581166219
15 9.02755847964825
20 11.1903480646082
25 13.2176738631319
30 15.1477451800894
35 17.0005288149411
40 18.7895976211105
45 20.5247895195582
50 22.2138706190613
55 23.8627054385189
60 25.4759388445294
65 27.057093360461
70 28.6091703686733
75 30.1349387581324
80 31.6366960286442
85 33.1162504419378
90 34.5752635225472
};
\addlegendentry{{\legendEntryFontSize Algo.~\ref{OFW_alg_new}: tight bound $B_T$ from~(\ref{eq:PEP})}}
\addplot [semithick, mediumpurple148103189, dashed, mark=o, mark size=2, mark options={solid}]
table {%
1 0.999999999986411
2 1.73205077164806
3 2.34209616885784
4 2.90282823623805
5 3.42168738269308
6 3.91697957391197
7 4.38834045416619
8 4.84470501303823
9 5.28476977360727
10 5.71400805695854
11 6.13109156852881
12 6.53978673154998
13 6.93890009089812
14 7.33121252392302
15 7.71568171523579
20 9.55298861900087
25 11.2764639001485
30 12.91523643836
35 14.4863179235929
40 16.0021461745957
45 17.4709535774897
50 18.8995128351095
55 20.2925817455325
60 21.6543713495292
65 22.9879410641388
70 24.2961479561498
75 25.5810970007411
80 26.8448520074568
85 28.0889320029728
90 29.31489412696
95 30.5238801404065
100 31.7171067431816
};
\addlegendentry{{\legendEntryFontSize Optimized algo.~and bound from~(\ref{eq:jointstepsizeopt})}}
\end{axis}

\end{tikzpicture}
    \end{minipage}
    \hfill
    \begin{minipage}[t]{0.48\textwidth}
        \vspace*{0pt}
        \centering
\begin{tikzpicture}

\definecolor{crimson2143940}{RGB}{214,39,40}
\definecolor{darkgray176}{RGB}{176,176,176}
\definecolor{darkturquoise23190207}{RGB}{23,190,207}
\definecolor{forestgreen4416044}{RGB}{44,160,44}
\definecolor{goldenrod18818934}{RGB}{188,189,34}
\definecolor{lightgray204}{RGB}{204,204,204}
\definecolor{steelblue31119180}{RGB}{31,119,180}

\begin{axis}[
legend cell align={left},
legend style={
  fill opacity=1,
  draw opacity=1,
  text opacity=1,
  at={(0.5,-\legendDist)},
  anchor=north,
  draw=none
},
tick align=outside,
tick pos=left,
x grid style={darkgray176},
xlabel={Time horizon $T$},
xmin=-5, xmax=105,
xtick style={color=black},
y grid style={darkgray176},
ymin=-2, ymax=55,
ytick style={color=black},
  width=\figWidth,
  height=\figHeight,
  scale only axis,
 font=\small
]
\addplot [semithick, forestgreen4416044, mark=Mercedes star, mark size=3, mark options={solid}]
table {%
1 2.193456688254154
2 3.688939732334405
3 5.0
4 6.204032394013997
5 7.334264473278278
6 8.408964152537145
7 9.439578301652581
8 10.433897200488582
10 12.334713408204754
15 16.71850762441055
20 20.74443257628261
25 24.523591303189267
30 28.117066259517458
35 31.56313236147981
40 34.88783797973685
45 38.109956111596105
50 41.24360003199805
55 44.299730116640355
60 47.28708045015879
65 50.212758528225045
70 53.08264971389377
75 55.90169943749474
80 58.674115786226224
85 61.40351851407624
90 64.0930509594351
95 66.74546564983403
100 69.3631908381303
};
\addlegendentry{{\legendEntryFontSize Anytime Algo.~\ref{OFW_alg_new_anytime}: bound from Theorem~\ref{thm_upper_bound_OFW_anytime}}}
\addplot [semithick, crimson2143940, dashed, mark=asterisk, mark size=1, mark options={solid}]
table {%
1 0.999999978829868
2 1.86602540835967
3 2.86602530022636
4 3.83521194338494
5 4.84566679047157
6 5.77847121423904
7 6.64296540269502
8 7.47558796478797
10 9.06556940344561
15 12.7147146818818
20 16.0829251259494
25 19.2687001005565
30 22.3084311637338
35 25.2313421308435
40 28.0532078096698
45 30.7885720243218
50 33.4461650859791
55 36.0337573951143
60 38.5568065885238
65 41.02483254
70 43.44430215
75 45.8196833
80 48.15440388
90 52.7148001953334
100 57.147182191147
};
\addlegendentry{{\legendEntryFontSize \cite[Algo.~27]{hazanIntroductionOnlineConvex2016}: tight bound $B_T$ from~(\ref{eq:PEP})}}
\addplot [thick, crimson2143940, dashed, mark=+, mark size=2, mark options={solid}]
table {%
1 0.999999978829868
2 1.86602540820839
3 2.86602528722214
4 3.81941917172879
5 4.7843884768231
6 5.66753177316362
7 6.50107951868022
8 7.31139133654082
9 8.09325316957615
10 8.85139312035817
11 9.58774550759338
12 10.3054843155196
13 11.0069165395407
14 11.6938619340762
15 12.3679052961251
20 15.5818013204948
25 18.5962663625794
30 21.4601077826225
35 24.2075625818537
40 26.8596151684412
45 29.4313051952187
50 31.9318777341248
55 34.3709642044624
60 36.7548638838124
65 39.0892721371677
70 41.3798427025696
75 43.63062719759103
};
\addlegendentry{{\legendEntryFontSize Anytime \cite[Algo.~27]{hazanIntroductionOnlineConvex2016}: tight bound from~(\ref{eq:PEP})}}
\addplot [semithick, forestgreen4416044, dashed, mark=square, mark size=2, mark options={solid}]
table {%
1 0.99999997622009
2 1.86602539808008
3 2.70563299799477
4 3.58153879868095
5 4.36926755755856
6 5.10221940117672
7 5.79270161250749
8 6.4526213576502
10 7.70024853201964
15 10.5307220687907
20 13.0925557276164
25 15.4754138829171
30 17.7332251672391
35 19.895717259638
40 21.9816644090207
45 24.0038401515299
50 25.9713602247288
55 27.8912843690133
60 29.7692345570552
65 31.6097702772639
70 33.4164604156972
75 35.1922403201557
80 36.9396265985578
85 38.6611721882096
90 40.3588001390253
};
\addlegendentry{{\legendEntryFontSize Anytime Algo.~\ref{OFW_alg_new_anytime}: tight bound $B_T$ from~(\ref{eq:PEP})}}
\addplot [semithick, forestgreen4416044, dashed, mark=x, mark size=2, mark options={solid}]
table {%
1 0.99999997622009
2 1.86602536857043
3 2.79508504207169
4 3.41695016277647
5 4.00578768309647
6 4.56827781991907
7 5.10944325015655
8 5.63752492727876
10 6.66119581166219
15 9.02755847964825
20 11.1903480646082
25 13.2176738631319
30 15.1477451800894
35 17.0005288149411
40 18.7895976211105
45 20.5247895195582
50 22.2138706190613
55 23.8627054385189
60 25.4759388445294
65 27.057093360461
70 28.6091703686733
75 30.1349387581324
80 31.6366960286442
85 33.1162504419378
90 34.5752635225472
};
\addlegendentry{{\legendEntryFontSize Algo.~\ref{OFW_alg_new}: tight bound $B_T$ from~(\ref{eq:PEP})}}
\end{axis}

\end{tikzpicture}
    \end{minipage}
    \hfill
    
\medskip

    \hfill
        \begin{minipage}[t]{0.48\textwidth}
        \vspace*{0pt}
        \centering
\begin{tikzpicture}

\definecolor{darkgray176}{RGB}{176,176,176}
\definecolor{lightgray204}{RGB}{204,204,204}
\definecolor{mediumpurple148103189}{RGB}{148,103,189}
\definecolor{orchid227119194}{RGB}{227,119,194}
\definecolor{sienna1408675}{RGB}{140,86,75}

\begin{axis}[
legend cell align={left},
legend style={
  fill opacity=1,
  draw opacity=1,
  text opacity=1,
  at={(0.5,-\legendDist)},
  anchor=north,
  draw=none
},
tick align=outside,
tick pos=left,
x grid style={darkgray176},
xlabel={Time horizon $T$},
xmin=-5, xmax=105,
xtick style={color=black},
y grid style={darkgray176},
ylabel={Worst-case regret},
ymin=-2, ymax=55,
ytick style={color=black},
  width=\figWidth,
  height=\figHeight,
  scale only axis,
 font=\small
]
\addplot [semithick, mediumpurple148103189, dashed, mark=o, mark size=2, mark options={solid}]
table {%
1 0.999999999986411
2 1.73205077164806
3 2.34209616885784
4 2.90282823623805
5 3.42168738269308
6 3.91697957391197
7 4.38834045416619
8 4.84470501303823
9 5.28476977360727
10 5.71400805695854
11 6.13109156852881
12 6.53978673154998
13 6.93890009089812
14 7.33121252392302
15 7.71568171523579
20 9.55298861900087
25 11.2764639001485
30 12.91523643836
35 14.4863179235929
40 16.0021461745957
45 17.4709535774897
50 18.8995128351095
55 20.2925817455325
60 21.6543713495292
65 22.9879410641388
70 24.2961479561498
75 25.5810970007411
80 26.8448520074568
85 28.0889320029728
90 29.31489412696
95 30.5238801404065
100 31.7171067431816
};
\addlegendentry{{\legendEntryFontSize Optimized algo.~(\ref{eq:OFW_multiple}), $r=1$,  from~(\ref{eq:jointstepsizeopt})}}
\addplot [semithick, sienna1408675, dashed, mark=triangle, mark size=2, mark options={solid}]
table {%
1 0.999999999986411
2 1.63299311979155
3 2.15168457507838
4 2.62509003041537
5 3.06278519425511
6 3.4795301018729
7 3.8762878127482
8 4.25983704757672
9 4.62989237992315
10 4.99045867833446
11 5.3410098087897
12 5.68423150190584
13 6.01957981552844
14 6.34900120418311
15 6.6719871075061
20 8.21488294667517
25 9.66223380277867
30 11.0384070754211
35 12.3578609109691
40 13.6309257863288
45 14.8646107403946
50 16.0645089825127
55 17.2346879049347
60 18.378617200216
65 19.4989121739457
70 20.597904053024
75 21.6774336420247
80 22.7391637690755
};
\addlegendentry{{\legendEntryFontSize Optimized algo.~(\ref{eq:OFW_multiple}), $r=2$, from~(\ref{eq:jointstepsizeopt})}}
\addplot [semithick, orchid227119194, dashed, mark=square, mark size=2, mark options={solid}]
table {%
1 0.999999999986411
2 1.5837637868675
3 2.05505753723153
4 2.48625209927649
5 2.88336510543731
6 3.26149038547669
7 3.62091546948163
8 3.96831411446438
9 4.30323394828769
10 4.62950562985113
11 4.94658359011182
12 5.25698225939425
13 5.560188906985
14 5.85799574095499
15 6.14994478087934
20 7.54419607303208
25 8.85176336291036
30 10.0948492043156
35 11.2866164238039
40 12.4364369256464
45 13.5506663055645
50 14.6343709611955
};
\addlegendentry{{\legendEntryFontSize Optimized algo.~(\ref{eq:OFW_multiple}), $r=3$, from~(\ref{eq:jointstepsizeopt})}}
\end{axis}

\end{tikzpicture}
    \end{minipage}
    \hfill
    \begin{minipage}[t]{0.48\textwidth}
        \vspace*{0pt}
        \centering
\begin{tikzpicture}

\definecolor{darkgray176}{RGB}{176,176,176}
\definecolor{gray127}{RGB}{127,127,127}
\definecolor{lightgray204}{RGB}{204,204,204}
\definecolor{mediumpurple148103189}{RGB}{148,103,189}

\begin{axis}[
legend cell align={left},
legend style={
  fill opacity=1,
  draw opacity=1,
  text opacity=1,
  at={(0.5,-\legendDist)},
  anchor=north,
  draw=none
},
tick align=outside,
tick pos=left,
x grid style={darkgray176},
xlabel={Time horizon $T$},
xmin=-5, xmax=105,
xtick style={color=black},
y grid style={darkgray176},
ymin=-2, ymax=55,
ytick style={color=black},
  width=\figWidth,
  height=\figHeight,
  scale only axis,
 font=\small
]
\addplot [semithick, gray127, dashed, mark=star, mark size=2, mark options={solid}]
table {%
1 0.999999999988875
2 1.73205079427809
3 2.34209622334741
4 2.90670363021072
5 3.44220483725166
6 3.95167756792504
7 4.44511777330512
8 4.92357067946405
9 5.39080426794702
10 5.84791808629133
11 6.29615347280525
12 6.73716873663269
13 7.17116723126338
14 7.59935835351766
15 8.02193620499087
20 10.0681475303238
25 12.0284923064602
30 13.9253024928537
35 15.7722150044208
40 17.5783873084459
45 19.3503333022518
50 21.0929670020047
55 22.8100363786092
60 24.5043424164102
65 26.1782977655244
70 27.8339857818303
75 29.4730527040755
80 31.0969519615577
85 32.7069906330521
90 34.3032966796525
95 35.8879962752747
100 37.4610375681626
};
\addlegendentry{{\legendEntryFontSize Optimized algo.~from~(\ref{eq:jointstepsizeopt}) with $\beta_{t,s}=0$}}
\addplot [semithick, mediumpurple148103189, dashed, mark=o, mark size=2, mark options={solid}]
table {%
1 0.999999999986411
2 1.73205077164806
3 2.34209616885784
4 2.90282823623805
5 3.42168738269308
6 3.91697957391197
7 4.38834045416619
8 4.84470501303823
9 5.28476977360727
10 5.71400805695854
11 6.13109156852881
12 6.53978673154998
13 6.93890009089812
14 7.33121252392302
15 7.71568171523579
20 9.55298861900087
25 11.2764639001485
30 12.91523643836
35 14.4863179235929
40 16.0021461745957
45 17.4709535774897
50 18.8995128351095
55 20.2925817455325
60 21.6543713495292
65 22.9879410641388
70 24.2961479561498
75 25.5810970007411
80 26.8448520074568
85 28.0889320029728
90 29.31489412696
95 30.5238801404065
100 31.7171067431816
};
\addlegendentry{{\legendEntryFontSize Optimized algo.~from~(\ref{eq:jointstepsizeopt})}}
\end{axis}

\end{tikzpicture}
    \end{minipage}
    \hfill
    \end{subfigure}

    \caption{
        (Top left) Comparison of known upper bounds (respectively from~\cite[Theorem 7.3]{hazanIntroductionOnlineConvex2016} and Theorem~\ref{thm_upper_bound_OFW}) against tight numerical bounds
        (worst-case regrets) obtained from~\eqref{eq:PEP}, for~\textcolor{crimson2143940}{\cite[Algorithm 27]{hazanIntroductionOnlineConvex2016}} and \textcolor{forestgreen4416044}{Algorithm~\ref{OFW_alg_new} (parameters from Theorem~\ref{thm_upper_bound_OFW})}. 
        (Top right) Tight numerical regret bounds for \textcolor{crimson2143940}{\cite[Algorithm 27]{hazanIntroductionOnlineConvex2016}} and \textcolor{forestgreen4416044}{Algorithm~\ref{OFW_alg_new} (parameters from Theorem~\ref{thm_upper_bound_OFW})} against their anytime versions.
        (Bottom left) Tight numerical bounds for optimized online Frank--Wolfe with respectively $r\in\{1, 2,3\}$ linear optimization steps per time round
        (where \eqref{eq:jointstepsizeopt_multiple} is a variant of~\eqref{eq:jointstepsizeopt}
        with~\eqref{eq:OFW_general} replaced by~\eqref{eq:OFW_multiple}, which we detail in
        Appendix~B.5).
        (Bottom right) Tight numerical regret bounds for optimized online Frank--Wolfe with and without regularization (i.e.,~\eqref{eq:jointstepsizeopt} with and without $\beta_{t,s}=0$).
    }
    \label{fig:Comparison_tau}
\end{figure*}

\paragraph{Comparison of known upper bounds to tight numerical bounds. (Top left of Figure~\ref{fig:Comparison_tau}.)}
Algorithm~\ref{OFW_alg_new}, instantiated with parameters in Equation~\eqref{eq:parameter_choice}, enjoys a regret guarantee that improves upon the classical bound $8LD T$ from~\cite[Theorem 7.3]{hazanIntroductionOnlineConvex2016} (corresponding to Algorithm 27 therein). 
This improvement also holds when comparing the tight bounds
of both algorithms, with a gain of approximately 34\% (i.e., a factor of about $0.66$).
Although the upper bound provided in Theorem~\ref{thm_upper_bound_OFW} is not tight, it remains within a constant factor of approximately $1.5$ of the numerically computed tight worst-case bound using~\eqref{eq:PEP}. 
Note that the difference between those two bounds is due to Theorem~\ref{thm_upper_bound_OFW} being optimal among simple potential-based proofs as described in Section~\ref{sec:design_proof_theo}, while the tight numerical bound is optimal among all possible proofs (including non-potential-based ones).
Moreover, Algorithm~\ref{OFW_alg_new} (with the parameters from Theorem~\ref{thm_upper_bound_OFW}) is near-optimal among online Frank--Wolfe algorithms of the form~\eqref{eq:OFW_general}, up to a constant multiplicative factor of roughly $1.18$.
Note that the difference between those two tight numerical bounds is due to them being optimal for different classes of algorithms:
tuning~\eqref{eq:parameter_choice} is optimal among the simpler variants of OFW defined by Algorithm~\ref{OFW_alg_new}, while \eqref{eq:jointstepsizeopt} gives optimal tuning among the wider class of algorithms defined by~\eqref{eq:OFW_general}, which includes Algorithm~\ref{OFW_alg_new} as a special case.

\paragraph{Anytime variants. (Top right of Figure~\ref{fig:Comparison_tau}.)} The anytime algorithmic variants presented in Figure~\ref{fig:Comparison_tau} correspond to respectively \cite[Algorithm 27]{hazanIntroductionOnlineConvex2016} and Algorithm~\ref{OFW_alg_new} (with parameter choices from Theorem~\ref{thm_upper_bound_OFW}) where the time-horizon $T$ dependence of algorithm parameters is replaced by the current time $t$.
We observe that the anytime variant of
Algorithm~\ref{OFW_alg_new} (that is, Algorithm~\ref{OFW_alg_new_anytime})
has a guarantee close to that of the original Algorithm~\ref{OFW_alg_new}
within a constant multiplicative factor of about $1.17$.
The anytime variant of \cite[Algorithm 27]{hazanIntroductionOnlineConvex2016}
has a better guarantee than the original \cite[Algorithm 27]{hazanIntroductionOnlineConvex2016},
but still worse than the anytime variant of
Algorithm~\ref{OFW_alg_new}
(with a constant multiplicative factor of about $1.24$).
Although the upper bound provided in Theorem~\ref{thm_upper_bound_OFW_anytime} is not tight, it remains within a constant factor of approximately $1.6$ of the numerically computed tight worst-case bound.

\paragraph{Multiple linear optimization rounds per iteration. (Bottom left of Figure~\ref{fig:Comparison_tau}.)} A natural extension of~\eqref{eq:OFW_general} consists in performing a fixed number ($r>1$) of linear optimization steps per iteration by defining $r$ search directions $\text{dir}_{t,1},\ldots,\text{dir}_{t,r}$ (sequentially) per time step, together with the corresponding atoms $v_{t,1},\ldots,v_{t,k}$:
\begin{equation}\label{eq:OFW_multiple}
\begin{aligned}
& \text{dir}_{t,k} =  \sum_{s=1}^t \eta_{t,k,s}\, g_s 
    						+ \sum_{s=1}^{t-1}\sum_{j=1}^r \beta_{t,k,s,j}\, (v_{s,j}-x_1) 
\\ & \qquad\qquad\qquad\qquad\
    						+ \sum_{j=1}^{k-1} \beta_{t,k,t,j}\, (v_{t,j}-x_1) \\[.2cm]
& v_{t,k} = \argmin_{v\in \domain} \langle \text{dir}_{t,k} , v \rangle
\end{aligned}
\end{equation}
for choosing the next query point $x_{t+1} = x_1 +  \sum_{s=1}^t\sum_{k=1}^r \gamma_{t+1,s,k}\, (v_{s,k} - x_1)$.
    				We observe that multiple rounds (fixed in advance and not a function of the time horizon) of linear optimization oracles do not help improve the regret rates of online Frank--Wolfe algorithms. In all cases, the regret scales as $T^{3/4}$.
(For a clearer view of this rate, log–log plot variants of Figure~\ref{fig:Comparison_tau} are provided in Appendix~\ref{a:log_log_plots}.)
Hence, this gives a strong numerical conjecture that all variants of OFW
within our model class have a regret rate of $\Omega(T^{3/4})$.
The numerical regret bounds for optimized online Frank--Wolfe algorithms
with fixed number $r>1$ of linear optimization oracle calls per time round
were obtained in a similar way to the method outlined in Section~\ref{s:opt_param},
see Appendix~\ref{a:numerical_multiple_stepsizeopt} for details.

\paragraph{Unregularized online Frank--Wolfe. (Bottom right of Figure~\ref{fig:Comparison_tau}.)} A desirable feature of setting $\beta_{t,s}=0$ in~\eqref{eq:OFW_general} is that the remaining parameters $\eta_{t,s}$ and $\gamma_{t,s}$ are naturally dimension-independent, that is, they cannot depend on $L$ and $D$ in a meaningful way (as there is no other external quantities, there is no way to have $\eta_{t,s}$ and $\gamma_{t,s}$ being dimension-independent while depending on $L$ and $D$). Unfortunately, this nice feature is counter-balanced by an apparent worse regret rate.
More precisely, the optimized Frank--Wolfe algorithms with fixed $\beta_{t,s}=0$ appear to have their worst-case regrets scaling as $O(T^\alpha)$ for $\alpha \approx 7/8$.

\section{\uppercase{Conclusion}}\label{s:ccl}

In this work, we have studied and developed projection-free algorithms 
for online learning that rely on linear optimization oracles (a.k.a. Frank--Wolfe)
for handling the constraint set, and for convex loss functions.
More precisely, the contributions of this work are
(i)~to formulate the problem of analyzing and designing online Frank--Wolfe-type algorithms as semidefinite problems (SDPs) that can be solved numerically in a variety of settings,
(ii)~how to use those design methods to propose an improved (optimized) variant of an online Frank--Wolfe algorithm (Algorithm~\ref{OFW_alg_new}),
along with its conceptually simple potential-based proof,
and (iii)~its anytime version, which benefits from a similar $O(T^{3/4})$ regret rate without requiring knowledge of the time horizon $T$ in advance.
This SDP methodology provides a constructive and principled approach
to regret bounds and their corresponding worst-case instances.
Algorithms with optimal regret guarantees can then be designed
by jointly optimising the algorithm parameters and the regret bound.
We then leveraged those techniques to perform 
rigorous numerical experiments strongly supporting 
(a)~near-optimality claims of the proposed parameter choices,
and (b)~that all OFW-type algorithms have a regret rate of $\Omega(T^{3/4})$.
Finally, we explained how the presented SDP approach can be used to obtain conceptually simple proofs with optimal regret bounds, taking as an example
our potential-based proof from part~(ii).

The findings of our numerical experiments
motivate the following future work opportunities:
(1)~Can we find a tighter analysis of the OFW algorithm?
(2)~Can we find a closed-form for the optimal OFW-type methods?
(3)~Can we find a closed-form solution and the corresponding regret bound for the optimal unregularized OFW-type method?

\acknowledgments{
This work took place in the context of the associate team 4TUNE within \href{https://project.inria.fr/inriacwi/4tune/}{CWI-Inria international lab}. 
Julien Weibel and Adrien Taylor are supported by the European Union (ERC grant CASPER 101162889). 
Views and opinions expressed are however those of the author(s) only and do not necessarily reflect those of the European Union or the European Research Council. Neither the European Union nor the granting authority can be held responsible for them.
The French government also partly funded this work under the management of Agence Nationale de la Recherche as part of the ``France 2030'' program, reference ANR-23-IACL-0008 (PR[AI]RIE-PSAI)
and reference ANR-23-IACL-0006 (MIAI Cluster).

The author would like to thank Shuvomoy Das Gupta and anonymous referees
for helpful comments
which helped improved the presentation of this article.
}

\bibliographystyle{apalike}

\section*{Checklist}

\begin{enumerate}

  \item For all models and algorithms presented, check if you include:
  \begin{enumerate}
    \item A clear description of the mathematical setting, assumptions, algorithm, and/or model. 
    
    [\textbf{Yes}]
    Mathematical settings and assumptions are represented in the introduction section and restated in theorems. Algorithms are presented in dedicated display environments.
    \item An analysis of the properties and complexity (time, space, sample size) of any algorithm. 
    
    [\textbf{Not Applicable}]
    We only consider algorithms from an OFW class that all perform
    one linear optimization step per time round (same time complexity),
    and that all have access to exactly one gradient per time step 
    (same sample size complexity).
    The class contains algorithms with complete memory, but the algorithms
    studied in the theorems all work with constant space.
    \item (Optional) Anonymized source code, with specification of all dependencies, including external libraries. 
    
    [\textbf{No}]
    Open-source and user-friendly code relying on standard external software packages will be released upon acceptance of this work at the conference.
  \end{enumerate}

  \item For any theoretical claim, check if you include:
  \begin{enumerate}
    \item Statements of the full set of assumptions of all theoretical results.
    
    [\textbf{Yes}]
    All the theoretical results of this work can be found in Section~\ref{sec:theo_proof}.
    Those theoretical results include their full set of assumptions.
    \item Complete proofs of all theoretical results. 
    
    [\textbf{Yes}]
			Complete and correct proofs of all theoretical results can be found in Section~\ref{sec:theo_proof}, except for the more technical proofs
			which can be found in Appendix~\ref{a:proof_lemma_upper_bound}
			(with comments in Section~\ref{sec:theo_proof} pointing
		to Appendix~\ref{a:proof_lemma_upper_bound}).
    \item Clear explanations of any assumptions. 
    
    [\textbf{Yes}]     
    The assumptions used---which are standard in the literature---are presented and explained in the introduction section.
  \end{enumerate}

  \item For all figures and tables that present empirical results, check if you include:
  \begin{enumerate}
    \item The code, data, and instructions needed to reproduce the main experimental results (either in the supplemental material or as a URL). 
    
    [\textbf{Yes}]
        The techniques used for reproducing all the numerical results presented in this work are explained in Section~\ref{sec:PEP_for_OFW}---and all omitted details are provided in Appendix~\ref{a:SDPs}. Furthermore, the Python code will be released on an open repository and will be easily executed with standard software suites.
    \item All the training details (e.g., data splits, hyperparameters, how they were chosen). 
    
    [\textbf{Yes}]
    All numerical results of this work were obtained via classical convex optimization packages~\cite{aps2019mosek,JMLR:v17:15-408}, but other ones could have been used instead (deterministic convex optimization).
    \item A clear definition of the specific measure or statistics and error bars (e.g., with respect to the random seed after running experiments multiple times). 
    
    [\textbf{Not Applicable}]
    The numerical experiments in this work are deterministic
    			(they do not involve any probability distributions or dataset),
			thus the notions of statistical significance and error bars are not relevant here.    \item A description of the computing infrastructure used. (e.g., type of GPUs, internal cluster, or cloud provider). 
    
    [\textbf{Yes}]
    Numerical experiments are cheap and can be run on any modern laptop. We included a comment with the model of modern laptop
    used to run those numerical experiments, 
    an upper bound on computation time,
    and a remark about RAM size being a limiting factor for running
    larger experiments.
  \end{enumerate}

  \item If you are using existing assets (e.g., code, data, models) or curating/releasing new assets, check if you include:
  \begin{enumerate}
    \item Citations of the creator If your work uses existing assets. 
    [\textbf{Yes}]
    \item The license information of the assets, if applicable. [\textbf{Not Applicable}]
    \item New assets either in the supplemental material or as a URL, if applicable. [\textbf{Not Applicable}]
    \item Information about consent from data providers/curators. [\textbf{Not Applicable}]
    \item Discussion of sensible content if applicable, e.g., personally identifiable information or offensive content. [\textbf{Not Applicable}]
  \end{enumerate}
      The classical convex optimization packages~\cite{aps2019mosek,JMLR:v17:15-408} used to run the numerical experiments of this work are properly cited in the paper.
    No other asset is used in this work.
    Our paper does not introduce any new asset. 

  \item If you used crowdsourcing or conducted research with human subjects, check if you include:
  \begin{enumerate}
    \item The full text of instructions given to participants and screenshots. [\textbf{Not Applicable}]
    \item Descriptions of potential participant risks, with links to Institutional Review Board (IRB) approvals if applicable. [\textbf{Not Applicable}]
    \item The estimated hourly wage paid to participants and the total amount spent on participant compensation. [\textbf{Not Applicable}]
  \end{enumerate}
	We did not use crowdsourcing nor conduct research with human subjects. 

\end{enumerate}

\clearpage
\appendix
\thispagestyle{empty}

\onecolumn
\aistatstitle{Supplementary Materials}

\appendix

\tableofcontents

\section{Omitted proofs}

\subsection{Detailed proof of Lemma~\ref{lemma_upper_bound_OFW}}
\label{a:proof_lemma_upper_bound}

We first discuss the main difference between our proof scheme for upper bounding the regret of OFW
and that of Hazan~\cite[Theorem 7.3]{hazanIntroductionOnlineConvex2016}.
Hazan's proof scheme is based on proving by induction that 
$F_{t+1}(x_t) - F_{t+1}(y_{t+1}) \leq 2 D \sigma_t$ where for all $t$,
the function $F_{t} : x \mapsto \eta \sum_{s=1}^{t-1} \langle g_s, x \rangle + \frac{1}{2} \Vert x-x_1 \Vert^2$
is the intermediate objective function of the FTRL iterate $y_t$.
Using the strong convexity of the functions $F_{t}$, this bound can be transformed into a bound
on the norm distance between the OFW iterates $x_t$ and the anticipative\footnote{
There is a typo in the proof of Hazan~\cite[Theorem 7.3]{hazanIntroductionOnlineConvex2016}.
The proof is written for bounding $F_{t+1}(x_t) - F_{t+1}(y_{t})$
(indeed it seems more natural to compare $x_t$ to $y_t$ as OFW can be seen as an approximation of FTRL),
but after correction it indeed needs to be $F_{t+1}(x_t) - F_{t+1}(y_{t+1})$.
} FTRL iterates $y_{t+1}$,
which then gives a regret upper bound for OFW involving the regret upper bound of FTRL
and the norm distance errors $\Vert x_t - y_{t+1}\Vert^2$.
On the other hand, our proof scheme for OFW is potential-based and stores information on the closeness of the OFW iterates $x_{t+1}$ and the FTRL iterates $y_{t+1}$ through the norm term 
$\frac{1}{6 \eta} \Vert x_{t+1} - y_{t+1} \Vert^2$ in the potential $\phi_t$
rather than through the FTRL intermediate functions with $F_{t+1}(x_t) - F_{t+1}(y_{t+1})$ in Hazan's proof scheme.
A benefit of our more direct approach is that when upper bounding the potential increase $\phi_t - \phi_{t-1}$,
we get that either the regret or the norm term $\Vert x_t - y_{t+1} \Vert^2$
can be “large” but not both simultaneously,
whereas with Hazan's proof scheme both could be “large” simultaneously.
This allows our proof scheme to give tighter regret upper bounds for OFW
than Hazan's proof scheme.
Another improvement of our proof scheme is to work with linearized cost functions,
which allows the OFW and FTRL iterates to use the same gradients
(in Hazan's proof scheme, the mismatch between gradients for OFW and FTRL creates an extra error term).

\begin{proof}[Proof of Lemma~\ref{lemma_upper_bound_OFW}]
Let $t\in \llbracket 1, T\rrbracket$ be fixed. 
We are going to prove the upper bound on $\phi_t - \phi_{t-1}$.
Denote by $G_{t-1} = \sum_{s=1}^{t-1} g_s$ the sum of past gradients,
so that $G_t = G_{t-1} + g_t$.
Note that we have:
\begin{multline*}
\phi_t - \phi_{t-1}
=  \langle g_t, x_t - y_{t+1} \rangle + \langle G_{t-1}, y_t - y_{t+1} \rangle \\
    + \frac{1}{6\eta} ( \Vert x_{t+1} - y_{t+1}\Vert^2 - \Vert x_{t} - y_{t}\Vert^2 )
    + \frac{1}{2\eta} ( \Vert y_{t} - x_1 \Vert^2 - \Vert y_{t+1} - x_1 \Vert^2 ) .
\end{multline*}

Using the optimality in the definition of $y_t$,
we have that 
$\langle G_{t-1}, y_t - y_{t+1} \rangle 
	\leq \frac{1}{\eta} \langle y_t - x_1, y_{t+1} - y_t \rangle$,
which gives us:
\begin{equation}
	\label{eq_upper_bound_pot_OFW_1}
\phi_t - \phi_{t-1}
\leq \langle g_t, x_t - y_{t+1} \rangle
        + \frac{1}{6\eta} ( \Vert x_{t+1} - y_{t+1}\Vert^2 - \Vert x_{t} - y_{t}\Vert^2 )
        - \frac{1}{2\eta} \Vert y_t - y_{t+1} \Vert^2 .
\end{equation}

From the optimality in the definitions of $v_t$ and $y_{t+1}$, we have:
\begin{align*}
\langle G_t, v_t - y_{t+1} \rangle + \frac{1}{\eta} \langle x_t - x_1, v_t - y_{t+1} \rangle 
& \leq 0 \\
\text{and }\quad
\langle G_t, y_{t+1} - v_t \rangle + \frac{1}{\eta} \langle y_{t+1} - x_1, y_{t+1} -v_t \rangle 
& \leq 0,
\end{align*}
which imply that $\langle x_t - y_{t+1}, v_t - y_{t+1} \rangle \leq 0$.
Thus, since $x_{t+1} = (1-\sigma)x_t+\sigma v_t$, we get:
\begin{align*}
\Vert x_{t+1} - y_{t+1}\Vert^2 
& = (1-\sigma)^2 \Vert x_{t} - y_{t+1}\Vert^2       
                                + 2\sigma(1-\sigma) \langle x_t - y_{t+1}, v_t - y_{t+1} \rangle
                                + \sigma^2 \Vert v_{t} - y_{t+1}\Vert^2 
\\ & \leq (1-\sigma)^2 \Vert x_{t} - y_{t+1}\Vert^2       
                                - 2\sigma^2 \langle x_t - y_{t+1}, v_t - y_{t+1} \rangle
                                + \sigma^2 \Vert v_{t} - y_{t+1}\Vert^2 
\\ & = (1-2\sigma) \Vert x_{t} - y_{t+1}\Vert^2 
		+ \sigma^2 \Vert x_t - v_t \Vert^2 .
\end{align*}
Combining this inequality with Equation~\eqref{eq_upper_bound_pot_OFW_1}, we get:
\begin{equation}
	\label{eq_upper_bound_pot_OFW_2}
\phi_t - \phi_{t-1}
\leq \frac{\sigma^2}{6 \eta}  \Vert x_t - v_{t} \Vert^2 
        + \langle g_t, x_t - y_{t+1} \rangle - \frac{1}{2\eta} \Vert y_t - y_{t+1} \Vert^2
         + \frac{1}{6\eta} ( (1-2 \sigma) \Vert x_{t} - y_{t+1}\Vert^2 
                                - \Vert x_{t} - y_{t}\Vert^2 ) .
\end{equation}

Let $\lambda \geq 0$ whose value will be chosen later.
From the optimality in the definitions of $y_t$ and $y_{t+1}$, we have:
\begin{align*}
\langle G_{t-1}, y_t - y_{t+1} \rangle + \frac{1}{\eta} \langle y_t - x_1, y_t - y_{t+1} \rangle 
& \leq 0 \\
\text{and }\quad
\langle G_t, y_{t+1} - y_t \rangle + \frac{1}{\eta} \langle y_{t+1} - x_1, y_{t+1} -y_t \rangle 
& \leq 0,
\end{align*}
which imply that $\langle g_t, y_{t+1} - y_t \rangle 
\leq - \frac{1}{\eta} \Vert y_t - y_{t+1}\Vert^2$.
From this inequality, we obtain:
\begin{align*}
\langle g_t, x_t - y_{t+1} \rangle
& = \langle g_t, x_t - y_{t+1} + \lambda(y_t - y_{t+1}) \rangle 
        + \lambda \langle g_t, y_{t+1} - y_t \rangle 
\\ & \leq \langle g_t, x_t - y_{t+1} + \lambda(y_t - y_{t+1}) \rangle 
        - \frac{\lambda}{\eta} \Vert y_t - y_{t+1}\Vert^2
\\ & \leq \lambda^g \Vert g_t\Vert^2
        + \frac{1}{4 \lambda^g}  \Vert x_t - y_{t+1} + \lambda(y_t - y_{t+1}) \Vert^2 
        - \frac{\lambda}{\eta} \Vert y_t - y_{t+1} \Vert^2 \,,
\end{align*}
for any $\lambda^g >0$ to be chosen latter. 
Combining this inequality with Equation~\eqref{eq_upper_bound_pot_OFW_2}, we get:
\begin{multline*}
	\label{eq_upper_bound_pot_OFW_3}
\phi_t - \phi_{t-1}
\leq \frac{\sigma^2}{ 6\eta}  \Vert x_t - v_{t} \Vert^2 
	+ \lambda^g \Vert g_t\Vert^2
        + \frac{1}{4 \lambda^g} \Vert x_t - y_{t+1} + \lambda(y_t - y_{t+1}) \Vert^2 
	\\ - \frac{1+2\lambda}{2\eta} \Vert y_t - y_{t+1} \Vert^2
        + \frac{1}{6\eta} ( (1-2 \sigma) \Vert x_{t} - y_{t+1}\Vert^2 
                                - \Vert x_{t} - y_{t}\Vert^2 ) .
\end{multline*}

Hence, to complete the proof for the upper bound on $\phi_t - \phi_{t-1}$,
it suffices to show that there exists $\lambda \geq 0$ such that the following expression is non-negative, 
which can be done by rewriting it as a sum of squares:
\begin{align*}
 Q(\lambda) & \triangleq
\frac{1+2\lambda}{2\eta} \Vert y_t - y_{t+1} \Vert^2 
- \frac{1}{4 \lambda^g} \Vert x_t - y_{t+1} + \lambda(y_t - y_{t+1}) \Vert^2      
        \\ & \qquad - \frac{1}{6\eta} ( (1-2 \sigma) \Vert x_{t} - y_{t+1}\Vert^2 
                                - \Vert x_{t} - y_{t}\Vert^2 ) .
\\ & = \frac{2+3\lambda}{3\eta} \Vert y_t - y_{t+1} \Vert^2      
 + \frac{\sigma}{3\eta} \Vert x_{t} - y_{t+1}\Vert^2  
 + \frac{1}{3\eta}  \langle x_{t} - y_{t+1}, y_{t+1} - y_{t} \rangle 
\\ & \qquad - \frac{1}{4 \lambda^g}  \Vert x_t - y_{t+1} + \lambda(y_t - y_{t+1}) \Vert^2
\end{align*}
Indeed, substituting our choices of $\eta = \frac{D 3^{3/4}}{2 L T^{3/4}}$, $\sigma =  \min(1, {\frac{\sqrt{3}}{\sqrt{T}}})$ and $\lambda^g = \frac{\sigma^2 D^2}{3 \eta L^2}$, whose derivations are detailed in Section~\ref{sec:theo_proof}, and defining $\tilde T = T/3$ for conciseness, we obtain:
\begin{align*}
 \frac{D}{L} Q(\lambda) %
& = \frac{4+6\lambda}{3} \tilde T^{3/4} \Vert y_t - y_{t+1} \Vert^2 
+ \frac{2 \tilde T^{1/4}}{3} \Vert x_{t} - y_{t+1}\Vert^2 
+ \frac{2 \tilde T^{3/4}}{3}  \langle x_{t} - y_{t+1}, y_{t+1} - y_{t} \rangle 
\\ & \qquad - \frac{3 \tilde T^{1/4}}{8} \Vert x_t - y_{t+1} + \lambda(y_t - y_{t+1}) \Vert^2   
\\ & = \Bigl(\frac{4+6\lambda}{3} \tilde T^{3/4} 
	- \frac{3 \tilde T^{1/4}}{8} \lambda^2 \Bigr) \Vert y_t - y_{t+1} \Vert^2 
+ \frac{7 \tilde T^{1/4}}{24} \Vert x_{t} - y_{t+1}\Vert^2 
\\ & \qquad
+ 2\times \Bigl( \frac{ \tilde T^{3/4}}{3} + \frac{3 \tilde T^{1/4}}{8} \lambda \Bigr)  \langle x_{t} - y_{t+1}, y_{t+1} - y_{t} \rangle .
\end{align*}

Thus, using a Schur complement argument, 
$Q(\lambda)$ is a sum of squares if and only if:
$$ \Bigl( \frac{1}{3} \tilde T^{3/4} + \frac{3}{8} \tilde T^{1/4} \lambda \Bigr)^2
        \leq \frac{7}{24} \tilde T^{1/4} 
       \Bigl( \frac{4+6\lambda}{3} \tilde T^{3/4} - \frac{3}{8} \tilde T^{1/4} \lambda^2 \Bigr) ,$$
which is equivalent to:
$$ \frac{\sqrt{\tilde T}}{4} \lambda^2 - \frac{\tilde T}{3} \lambda + \Bigl( \frac{\tilde T^{3/2}}{9} - \frac{7 \tilde T}{18} \Bigr) \leq 0 .$$
Hence, there exists a non-negative $\lambda$ solving this second order equation if and only if $\Delta \geq 0$, where:
$$ \Delta\triangleq \frac{\tilde T^2}{9} - 4 \frac{\sqrt{\tilde T}}{4}
	\Bigl( \frac{\tilde T^{3/2}}{9} - \frac{7\tilde T}{18} \Bigr)
= \frac{7 \tilde T^{3/2}}{18} \geq 0 .$$
As a consequence, 
there exists a choice of $\lambda\geq 0$ such that $Q(\lambda)$ is a sum of squares,
which concludes the proof of the upper bound on $\phi_t - \phi_{t-1}$,
and thus also concludes the proof of Lemma~\ref{lemma_upper_bound_OFW}.
\end{proof}

\subsection{Potential-based proof for FTRL}\label{a:proof_pot_FTRL}

We present here the potential-based proof of the optimal upper bound
on the regret of FTRL (see Algorithm~\ref{FTRL_alg})
using the potential $\psi_t$ defined in \eqref{eq_def_pot_FTRL}.
This potential-based proof is fundamentally
a reformulation of classical FTRL regret proofs
(see , e.g.,~\cite[Chapters 6--7]{orabona2019modern}).
As the proof for the single (horizon-dependent) parameter $\eta$ version of FTRL
immediately generalizes to the round-dependent parameters $\{\eta_t\}_{t\in\llbracket 2, T\rrbracket}$ version of FTRL,
we prove the result for the latter version.

We first define the anytime version of FTRL with round-dependent parameters,
where the values of $\eta_t$ can be different between the rounds.

\begin{algorithm}[H]
\caption{Follow The Regularized Leader (FTRL)}
\label{FTRL_alg_anytime}
\begin{algorithmic}[1]
\Require $T\geq 1$,~ $y_1 \in \cK$, $\eta_t \geq 0 \text{ for } t\in\llbracket 2, T\rrbracket$
\For{$t=1$ to $T$} 
    \State Play $y_t$, pay cost $\ell_t(y_t)$, and observe $g_t = \nabla\ell_t(y_t)$.
    \State $y_{t+1} \gets \argmin_{y\in \cK} \eta_{t+1} \langle  \sum_{s=1}^t g_s , y \rangle + \frac{1}{2} \Vert y - x_1 \Vert^2 $
\EndFor
\end{algorithmic}
\end{algorithm}

We define the potential $\psi_t$ for the anytime version of FTRL
(which is the anytime version of the definition in \eqref{eq_def_pot_FTRL}):
\begin{equation}\label{eq_def_pot_FTRL_anytime}
\psi_t = \sum_{s=1}^t \langle g_s, y_s - y_{t+1} \rangle - \frac{1}{2 \eta_{t+1}} \Vert y_{t+1} - y_1 \Vert^2 .
\end{equation}

We can now state and prove the regret upper bound for the anytime version of FTRL
with non-increasing parameters $\{\eta_t\}_{t\in\llbracket 2,T\rrbracket}$.
This gives the optimal regret upper bound in the single-parameter case
(i.e., $\eta_t  =\eta$ for all $t$).

\begin{lemme}
Let $T\geq 1$.
Assume that the cost functions $\ell_t$ are convex and $L$-Lipschitz for all $t\in \llbracket 1, T \rrbracket$,
and that the convex closed domain $\cK$ of feasible points has a diameter bounded by $D$.
Assume that the FTRL parameters $\{ \eta_t \}_{t\in \llbracket 2,T \rrbracket}$ are non-increasing in $t$,
and set $\eta_{T+1} = \eta_T$.
Then, for any $y_\star \in\cK$, the following upper bound 
on the regret of the FTRL Algorithm~\ref{FTRL_alg_anytime} holds:
\begin{equation*}
R_T  \triangleq  \sum_{t=1}^T \ell_t(y_t) - \ell_t(y_\star)
\leq \frac{1}{2} \sum_{t=1}^T \eta_t \Vert g_t \Vert^2
	+ \frac{1}{2\eta_{T}} \Vert y_\star - y_1 \Vert^2.
\end{equation*}
In particular, 
\begin{enumerate}[label=(\roman*)]
\item for $\eta = \frac{D}{L\sqrt{T}}$
we get that $R_T \leq L D \sqrt{T}$,
\item\label{regret_FTRL:item2} for $\eta_t = \frac{D}{L\sqrt{t}}$
we get that $R_T \leq \frac{3}{2} L D \sqrt{T}$,
\item\label{regret_FTRL:item3} for $\eta_t = \frac{D}{L\sqrt{2 t}}$
we get that $R_T \leq \sqrt{2} L D \sqrt{T}$.
\end{enumerate}
\end{lemme}

Note that we can impose $\eta_T = \eta_{T+1}$ as $\eta_{T+1}$ is only used for computing $y_{T+1}$
which comes after the horizon $T$ and is thus not played and only used as a virtual point in the analysis.
In particular, as $\eta_{T+1}$ are $y_{T+1}$ is a virtual value only used in the analysis, this does not
change the anytime property of the tunings \ref{regret_FTRL:item2} and \ref{regret_FTRL:item3}. 
This “trick” allows us to obtain a slightly tighter and nicer-looking regret upper bound:
indeed, for $\eta_t = \Theta(\frac{1}{\sqrt{t}})$, this allows us to save an extra factor of $O(\frac{1}{\sqrt{T}})$
and does not change the leading order term in $\Theta(\sqrt{T})$.

\begin{proof}
We present a potential-based proof based on the potential $\psi_t$
defined in \eqref{eq_def_pot_FTRL_anytime}.

We start by upper bounding $\psi_t - \psi_{t-1}$
for fixed $t\in \llbracket 1, T \rrbracket$.
Using the optimality in the definition of $y_t$ gives:
\begin{equation*}
\langle G_{t-1}, y_t - y_{t+1} \rangle 
+ \frac{1}{\eta_t} \langle y_t - y_1, y_t - y_{t+1} \rangle \leq 0 .
\end{equation*}
Then, using this equation 
and the inequality 
$\langle u, v \rangle \leq \frac{\eta_t}{2} \Vert u \Vert^2
                            + \frac{1}{2\eta_t} \Vert v \Vert^2$, 
we get the upper bound:
\begin{align*}
\psi_{t} - \psi_{t-1}
& = \langle G_t, y_t - y_{t+1} \rangle 
+ \frac{1}{2\eta_t}  \Vert y_t - y_1 \Vert^2 
                            - \frac{1}{2\eta_{t+1}} \Vert y_{t+1} - y_1 \Vert^2 
\\ & \leq \langle g_t, y_t - y_{t+1} \rangle 
+ \frac{1}{2\eta_t} \left( \Vert y_t - y_1 \Vert^2 
                        + 2 \langle y_t - y_1, y_{t+1} - y_t \rangle
                            - \Vert y_{t+1} - y_1 \Vert^2 \right)
\\ & \qquad \qquad \qquad\qquad
		- \left( \frac{1}{2\eta_{t+1}} - \frac{1}{2\eta_t} \right) \Vert y_{t+1} - y_1 \Vert^2
\\ & = \langle g_t, y_t - y_{t+1} \rangle 
- \frac{1}{2\eta_t} \Vert y_t - y_{t+1} \Vert^2 
- \left( \frac{1}{2\eta_{t+1}} - \frac{1}{2\eta_t} \right) \Vert y_{t+1} - y_1 \Vert^2
\\ & \leq \frac{\eta_t}{2} \Vert g_t \Vert^2 .
\end{align*}

Now, combining those potential differences, the sum telescopes
(remark that $\psi_0 = 0$) and we get:
\begin{equation*}
\psi_T 
= \sum_{t=1}^T \psi_t - \psi_{t-1}
\leq \frac{1}{2} \sum_{t=1}^T \eta_t \Vert g_t \Vert^2 .
\end{equation*}
Using the optimality in the definition of $y_{t+1}$ gives:
\begin{equation*}
R_T - \frac{1}{2\eta_{T+1}} \Vert y_\star - y_1 \Vert^2
\leq \psi_T ,
\end{equation*}
which concludes the proof of the main upper bound of $R_T$ (recall that $\eta_T = \eta_{T+1}$).
The second statement follows directly by using the Lipschitz and
diameter bounds.
\end{proof}

\subsection{Detailed proof of Theorem~\ref{thm_upper_bound_OFW_anytime}}
\label{sec:proof_OFW_anytime}

The proof of Theorem~\ref{thm_upper_bound_OFW_anytime}
is an adaptation of that of Theorem~\ref{thm_upper_bound_OFW}.
Thus, to prove Theorem~\ref{thm_upper_bound_OFW_anytime}, we start
by proving the following lemma upper bounding the potential increase $\phi_t - \phi_{t-1}$, which is an adaptation of Lemma~\ref{lemma_upper_bound_OFW}.
\begin{lemma}\label{lemma_upper_bound_OFW_anytime}
Denote by $\ell_t : x \mapsto \langle g_t, x \rangle$ 
the linear cost function at time $t\geq 1$. Assume that $\Vert g_t \Vert \leq L$ for all $t\geq 1$
and that the convex closed domain $\cK$ of feasible points has a diameter bounded by $D$.
Define the sequence of potentials $(\phi_t)_{t\geq 0}$ as:
\begin{equation*}
\phi_t \triangleq \sum_{s=1}^t \langle g_s, x_s - y_{t+1} \rangle 
	 - \frac{1}{2\eta_{t+1}} \Vert y_{t+1} - x_1 \Vert^2
	+ \frac{1}{6\eta_{t+1}} \Vert x_{t+1} - y_{t+1}\Vert^2 .
\end{equation*}
Then, Algorithm~\ref{OFW_alg_new_anytime} and~\ref{FTRL_alg_anytime}, run with parameters in~\eqref{eq:parameter_choice_anytime} and initialized in $y_1 = x_1 \in \cK$, satisfy,
\begin{equation*}
   \phi_t - \phi_{t-1} 
\leq \frac{2 D}{L 3^{3/4} t^{1/4}} \Vert g_t\Vert^2 
	+ \frac{L}{ D 3^{3/4} (t+1)^{1/4}} \Vert x_t - v_t \Vert^2 \,, \qquad \forall  t\geq2\,.
\end{equation*}
\end{lemma}

The proof of Lemma~\ref{lemma_upper_bound_OFW_anytime}
is an adaptation of that of Lemma~\ref{lemma_upper_bound_OFW}
where the difference between $\eta_{t+1}$ and $\eta_t$ creates extra error terms.

\begin{proof}[Proof of Lemma~\ref{lemma_upper_bound_OFW_anytime}]
Let $t\geq 2$ be fixed. 
We are going to prove the upper bound on $\phi_t - \phi_{t-1}$.
Denote by $G_{t-1} = \sum_{s=1}^{t-1} g_s$ the sum of past gradients,
so that $G_t = G_{t-1} + g_t$.
Note that we have:
\begin{multline*}
\phi_t - \phi_{t-1}
=  \langle g_t, x_t - y_{t+1} \rangle + \langle G_{t-1}, y_t - y_{t+1} \rangle \\
    + \frac{1}{6\eta_{t+1}} \Vert x_{t+1} - y_{t+1}\Vert^2 - \frac{1}{6\eta_{t}} \Vert x_{t} - y_{t}\Vert^2 
    + \frac{1}{2\eta_{t}} \Vert y_{t} - x_1 \Vert^2 - \frac{1}{2\eta_{t+1}} \Vert y_{t+1} - x_1 \Vert^2 .
\end{multline*}

Using the optimality in the definition of $y_t$,
we have that 
$\langle G_{t-1}, y_t - y_{t+1} \rangle 
	\leq \frac{1}{\eta_{t}} \langle y_t - x_1, y_{t+1} - y_t \rangle$,
which gives us:
\begin{multline}
	\label{eq_upper_bound_pot_OFW_1_anytime}
\phi_t - \phi_{t-1}
\leq \langle g_t, x_t - y_{t+1} \rangle
        - \frac{1}{2\eta_{t}} \Vert y_t - y_{t+1} \Vert^2 
        - \frac{1}{2} \left( \frac{1}{\eta_{t+1}} - \frac{1}{\eta_{t}} \right) \Vert y_{t+1} - x_1 \Vert^2 \\
       + \frac{1}{6\eta_{t+1}} \Vert x_{t+1} - y_{t+1}\Vert^2 - \frac{1}{6\eta_{t}} \Vert x_{t} - y_{t}\Vert^2 .
\end{multline}

From the optimality in the definitions of $v_t$ and $y_{t+1}$, we have:
\begin{align*}
\langle G_t, v_t - y_{t+1} \rangle + \frac{1}{\eta_{t+1}} \langle x_t - x_1, v_t - y_{t+1} \rangle 
& \leq 0 \\
\text{and }\quad
\langle G_t, y_{t+1} - v_t \rangle + \frac{1}{\eta_{t+1}} \langle y_{t+1} - x_1, y_{t+1} -v_t \rangle 
& \leq 0,
\end{align*}
which imply that $\langle x_t - y_{t+1}, v_t - y_{t+1} \rangle \leq 0$.
Thus, since $x_{t+1} = (1-\sigma_{t+1})x_t + \sigma_{t+1} v_t$, we get:
\begin{align*}
\Vert x_{t+1} - y_{t+1}\Vert^2 
& = (1-\sigma_{t+1})^2 \Vert x_{t} - y_{t+1}\Vert^2       
                                + 2\sigma_{t+1}(1-\sigma_{t+1}) \langle x_t - y_{t+1}, v_t - y_{t+1} \rangle
                                + \sigma_{t+1}^2 \Vert v_{t} - y_{t+1}\Vert^2 
\\ & \leq (1-\sigma_{t+1})^2 \Vert x_{t} - y_{t+1}\Vert^2       
                                - 2\sigma_{t+1}^2 \langle x_t - y_{t+1}, v_t - y_{t+1} \rangle
                                + \sigma_{t+1}^2 \Vert v_{t} - y_{t+1}\Vert^2 
\\ & = (1-2\sigma_{t+1}) \Vert x_{t} - y_{t+1}\Vert^2 
		+ \sigma_{t+1}^2 \Vert x_t - v_t \Vert^2 .
\end{align*}
Combining this inequality with Equation~\eqref{eq_upper_bound_pot_OFW_1_anytime}, we get:
\begin{multline}
	\label{eq_upper_bound_pot_OFW_2_anytime}
\phi_t - \phi_{t-1}
\leq \frac{\sigma_{t+1}^2}{6 \eta_{t+1}}  \Vert x_t - v_{t} \Vert^2 
        + \langle g_t, x_t - y_{t+1} \rangle - \frac{1}{2\eta_{t}} \Vert y_t - y_{t+1} \Vert^2
        - \frac{1}{2} \left( \frac{1}{\eta_{t+1}} - \frac{1}{\eta_{t}} \right) \Vert y_{t+1} - x_1 \Vert^2 \\
       	+ \frac{1}{6\eta_{t+1}} (1-2 \sigma_{t+1}) \Vert x_{t} - y_{t+1}\Vert^2 
                                - \frac{1}{6\eta_{t}} \Vert x_{t} - y_{t}\Vert^2 .
\end{multline}

Let $\lambda \geq 0$ whose value will be chosen later.
From the optimality in the definitions of $y_t$ and $y_{t+1}$, we have:
\begin{align*}
\langle G_{t-1}, y_t - y_{t+1} \rangle + \frac{1}{\eta_{t}} \langle y_t - x_1, y_t - y_{t+1} \rangle 
& \leq 0 \\
\text{and }\quad
\langle G_t, y_{t+1} - y_t \rangle + \frac{1}{\eta_{t+1}} \langle y_{t+1} - x_1, y_{t+1} -y_t \rangle 
& \leq 0,
\end{align*}
which imply that $\langle g_t, y_{t+1} - y_t \rangle 
\leq - \frac{1}{\eta_{t}} \Vert y_t - y_{t+1}\Vert^2 
	- ( \frac{1}{\eta_{t+1}} - \frac{1}{\eta_{t}}) \langle y_{t+1} - x_1, y_{t+1} -y_t \rangle $.
From this inequality, we obtain:
\begin{align*}
\langle g_t, x_t - y_{t+1} \rangle
& = \langle g_t, x_t - y_{t+1} + \lambda(y_t - y_{t+1}) \rangle 
        + \lambda \langle g_t, y_{t+1} - y_t \rangle 
\\ & \leq \langle g_t, x_t - y_{t+1} + \lambda(y_t - y_{t+1}) \rangle 
        - \frac{\lambda}{\eta_{t}} \Vert y_t - y_{t+1}\Vert^2
	\\ & \qquad 
	- \lambda \left( \frac{1}{\eta_{t+1}} - \frac{1}{\eta_{t}} \right) \langle y_{t+1} - x_1, y_{t+1} -y_t \rangle 
\\ & \leq \lambda^g \Vert g_t\Vert^2
        + \frac{1}{4 \lambda^g}  \Vert x_t - y_{t+1} + \lambda(y_t - y_{t+1}) \Vert^2 
        - \frac{\lambda}{\eta_{t}} \Vert y_t - y_{t+1} \Vert^2 \,
        \\ & \qquad 
        - \lambda \left( \frac{1}{\eta_{t+1}} - \frac{1}{\eta_{t}} \right) \langle y_{t+1} - x_1, y_{t+1} -y_t \rangle ,
\end{align*}
for any $\lambda^g >0$ to be chosen latter. 
Combining this inequality with Equation~\eqref{eq_upper_bound_pot_OFW_2_anytime}, we get:
\begin{align*}
	\label{eq_upper_bound_pot_OFW_3}
\phi_t - \phi_{t-1}
\leq \ & \frac{\sigma_{t+1}^2}{ 6\eta_{t+1}}  \Vert x_t - v_{t} \Vert^2 
	+ \lambda^g \Vert g_t\Vert^2
        + \frac{1}{4 \lambda^g} \Vert x_t - y_{t+1} + \lambda(y_t - y_{t+1}) \Vert^2 
	\\ &
	- \frac{1}{2} \left( \frac{1}{\eta_{t+1}} - \frac{1}{\eta_{t}} \right) \Vert y_{t+1} - x_1 \Vert^2 
	- \lambda \left( \frac{1}{\eta_{t+1}} - \frac{1}{\eta_{t}} \right) \langle y_{t+1} - x_1, y_{t+1} -y_t \rangle 
	\\ &
	- \frac{1+2\lambda}{2\eta_{t}} \Vert y_t - y_{t+1} \Vert^2
       	+ \frac{1}{6\eta_{t+1}} (1-2 \sigma_{t+1}) \Vert x_{t} - y_{t+1}\Vert^2 
                                - \frac{1}{6\eta_{t}} \Vert x_{t} - y_{t}\Vert^2 .
\end{align*}
Recombining terms around the scalar product term, this upper bound can be rewritten as:
\begin{align*}
	\phi_t - \phi_{t-1}
\leq \ & \frac{\sigma_{t+1}^2}{ 6\eta_{t+1}}  \Vert x_t - v_{t} \Vert^2 
	+ \lambda^g \Vert g_t\Vert^2
	- \frac{1}{2} \left( \frac{1}{\eta_{t+1}} - \frac{1}{\eta_{t}} \right) 
			\Vert y_{t+1} - x_1 + \lambda (y_{t+1} - y_{t}) \Vert^2 
	\\ &
	+ \frac{1}{4 \lambda^g} \Vert x_t - y_{t+1} + \lambda(y_t - y_{t+1}) \Vert^2 
	+ \frac{\lambda^2}{2} \left( \frac{1}{\eta_{t+1}} - \frac{1}{\eta_{t}} \right) \Vert y_t - y_{t+1} \Vert^2
	\\ &
	- \frac{1+2\lambda}{2\eta_{t}} \Vert y_t - y_{t+1} \Vert^2
       	+ \frac{1}{6\eta_{t+1}} (1-2 \sigma_{t+1}) \Vert x_{t} - y_{t+1}\Vert^2 
                                - \frac{1}{6\eta_{t}} \Vert x_{t} - y_{t}\Vert^2 .
\end{align*}

Using the definition of $\eta_t = \frac{D 3^{3/4}}{2 L t^{3/4}}$
and that $(1+x)^{3/4} \leq 1 + \frac{3}{4} x$ for $x\geq 0$, we get that
\begin{equation*}
0 \leq
\left( \frac{1}{\eta_{t+1}} - \frac{1}{\eta_{t}} \right) 
\leq \frac{3}{4 t \eta_t} .
\end{equation*}
Combining those two inequalities, we get:
\begin{align*}
	\phi_t - \phi_{t-1}
\leq \ & \frac{\sigma_{t+1}^2}{ 6\eta_{t+1}}  \Vert x_t - v_{t} \Vert^2 
	+ \lambda^g \Vert g_t\Vert^2
	\\ &
	+ \frac{1}{4 \lambda^g} \Vert x_t - y_{t+1} + \lambda(y_t - y_{t+1}) \Vert^2 
	- \left( \frac{1+2\lambda}{2\eta_t} - \frac{3\lambda^2}{8t \eta_{t}} \right) \Vert y_t - y_{t+1} \Vert^2
	\\ &
       	+ \frac{1}{6\eta_{t+1}} (1-2 \sigma_{t+1}) \Vert x_{t} - y_{t+1}\Vert^2 
                                - \frac{1}{6\eta_{t}} \Vert x_{t} - y_{t}\Vert^2 .
\end{align*}

Hence, to complete the proof for the upper bound on $\phi_t - \phi_{t-1}$,
it suffices to show that there exists $\lambda >0$ such that the following expression is non-negative:
\begin{align*}
 Q(\lambda) & \triangleq
\left( \frac{1+2\lambda}{2\eta_t} - \frac{3\lambda^2}{8t \eta_{t}} \right) \Vert y_t - y_{t+1} \Vert^2 
- \frac{1}{4 \lambda^g} \Vert x_t - y_{t+1} + \lambda(y_t - y_{t+1}) \Vert^2      
        \\ & \qquad -   \frac{1}{6\eta_{t+1}} (1-2 \sigma_{t+1}) \Vert x_{t} - y_{t+1}\Vert^2 
                                + \frac{1}{6\eta_{t}} \Vert x_{t} - y_{t}\Vert^2 
\\ & = \left( \frac{2+3\lambda}{3\eta_t} - \frac{3\lambda^2}{8t \eta_{t}} \right) \Vert y_t - y_{t+1} \Vert^2       
 + \frac{1}{3\eta_t}  \langle x_{t} - y_{t+1}, y_{t+1} - y_{t} \rangle 
 \\ & \qquad  + \left(\frac{\sigma_{t+1}}{3\eta_{t+1}} - \frac{1}{6} \left( \frac{1}{\eta_{t+1}} - \frac{1}{\eta_{t}} \right) \right) \Vert x_{t} - y_{t+1}\Vert^2 
- \frac{1}{4 \lambda^g}  \Vert x_t - y_{t+1} + \lambda(y_t - y_{t+1}) \Vert^2
\\ & \geq \tilde Q(\lambda) ,
\end{align*}
where
\begin{align*}
 \tilde Q(\lambda) & \triangleq
 \left( \frac{2+3\lambda}{3\eta_t} - \frac{3\lambda^2}{8t \eta_{t}} \right) \Vert y_t - y_{t+1} \Vert^2       
 + \frac{1}{3\eta_t}  \langle x_{t} - y_{t+1}, y_{t+1} - y_{t} \rangle 
 \\ & \qquad  + \left(\frac{\sigma_{t+1}}{3\eta_{t+1}} - \frac{1}{8 t \eta_{t}} \right) \Vert x_{t} - y_{t+1}\Vert^2 
- \frac{1}{4 \lambda^g}  \Vert x_t - y_{t+1} + \lambda(y_t - y_{t+1}) \Vert^2 
\\ & =
\left(  \frac{2+3\lambda}{3\eta_t} - \frac{3\lambda^2 t^{-1}}{8\eta_{t}} - \frac{\lambda^2}{4\lambda^g} \right) 
		\Vert y_t - y_{t+1} \Vert^2       
 + 2 \times \left( \frac{1}{6\eta_t} + \frac{\lambda}{4 \lambda^g} \right) \langle x_{t} - y_{t+1}, y_{t+1} - y_{t} \rangle 
 \\ & \qquad  + \left(\frac{\sigma_{t+1}}{3\eta_{t+1}} - \frac{1}{8 t \eta_{t}} - \frac{1}{4 \lambda^g} \right) 
 				\Vert x_{t} - y_{t+1}\Vert^2 .
\end{align*}
Hence, to complete the proof for the upper bound on $\phi_t - \phi_{t-1}$,
it suffices to show that there exists $\lambda \geq 0$ such that $\tilde Q(\lambda)$ is non-negative, which can be done by rewriting it as a sum of squares.
Using a Schur complement argument, we get that $\tilde Q(\lambda)$ is a sum of square if and only if:
\begin{equation}
\frac{\sigma_{t+1}}{3\eta_{t+1}} - \frac{1}{4 \lambda^g} - \frac{1}{8 t \eta_{t}} 
\geq 0
\end{equation}
and
\begin{equation}
\left( \frac{1}{6\eta_t} + \frac{\lambda}{4 \lambda^g} \right)^2
\leq \left(\frac{\sigma_{t+1}}{3\eta_{t+1}}  - \frac{1}{4 \lambda^g} - \frac{1}{8 t \eta_{t}} \right) 
\left(  \frac{2+3\lambda}{3\eta_t} - \frac{\lambda^2}{4\lambda^g} -  \frac{3\lambda^2}{8 t \eta_{t}} \right) .
\end{equation}
We now substitute our choices of 
$\eta_t = \frac{D 3^{3/4}}{2 L t^{3/4}}$, $\sigma_t =  \min(1, {\frac{\sqrt{3}}{\sqrt{t}}})$ and $\lambda^g = \frac{\sigma_t^2 D^2}{3 \eta_t L^2}$, whose derivations for the fixed parameter setting (i.e., non-anytime setting) are detailed in Section~\ref{sec:theo_proof}
(note that those parameter choices are still leading-term optimal in the anytime setting).
In particular, note that, as $t\geq 2$, we have that
$\frac{\sigma_{t+1}}{\eta_{t+1}} \geq \frac{2L}{D}\left(\frac{t}{3}\right)^{1/4}$.
Thus, defining $\tilde t = t/3$ for conciseness, we obtain for $t\geq 1$ that 
$\tilde Q(\lambda) \geq 0$ is a sum of square if:
\begin{equation}
\frac{7 \tilde t^{1/2}}{24} 
\geq \frac{1}{12 \tilde t^{1/4}} 
\end{equation}
and
\begin{equation}
\left( \frac{\tilde t^{3/4}}{3} + \frac{3 \tilde t^{1/4} \lambda}{8} \right)^2
\leq \left(\frac{7 \tilde t^{1/4}}{24} - \frac{1}{12 \tilde t^{1/4}}  \right) 
\left(  \frac{4 \tilde t^{3/4}}{3} + 2 \tilde t^{3/4}\lambda - \frac{3 \tilde t^{1/4} \lambda^2}{8} - \frac{\lambda^2}{4 \tilde t^{1/4}} \right) .
\end{equation}
The first line is always satisfied for $t\geq 1$.
The second line is equivalent to:
\begin{equation}\label{eq_pol_eq_anytime_OFW_lambda}
\left( \frac{\sqrt{\tilde t}}{4} + \frac{1}{24} - \frac{1}{48 \sqrt{\tilde t}} \right) \lambda^2
+ \left( - \frac{\tilde t}{3} + \frac{\sqrt{\tilde t}}{6} \right) \lambda
+ \left( \frac{\tilde t^{3/2}}{9} - \frac{7 \tilde t}{18} + \frac{\sqrt{\tilde t}}{9} \right)
\leq 0.
\end{equation}
Note that 
$- \frac{\tilde t}{3} + \frac{\sqrt{\tilde t}}{6} \leq 0$ is equivalent to $t \geq 3/4$,
which is always true.
Also note that $\frac{\sqrt{\tilde t}}{4} - \frac{10}{96} + \frac{1}{48 \sqrt{\tilde t}} \geq 0$
is true for any $t\geq 1$.
Hence, there exists a non-negative $\lambda$ that solves \eqref{eq_pol_eq_anytime_OFW_lambda}
if and only if $\Delta(\tilde t) \geq 0$ where:
\begin{equation}
\Delta(\tilde t) \triangleq 
\left( - \frac{\tilde t}{3} + \frac{\sqrt{\tilde t}}{6} \right)^2
- 4 \left( \frac{\sqrt{\tilde t}}{4} + \frac{1}{24} - \frac{1}{48 \sqrt{\tilde t}} \right)
\left( \frac{\tilde t^{3/2}}{9} - \frac{7 \tilde t}{18} + \frac{\sqrt{\tilde t}}{9} \right) .
\end{equation}
Indeed, we have:
\begin{equation}
\Delta(\tilde t) = \frac{7 \tilde t^{3/2}}{27} - \frac{\tilde t}{108} - \frac{11\sqrt{\tilde t}}{216} + \frac{1}{108}
\geq \Delta(1/3) > 0.
\end{equation}
As a consequence, 
there exists a choice of $\lambda\geq 0$ such that $\tilde Q(\lambda)$ is a sum of squares,
which concludes the proof of the upper bound on $\phi_t - \phi_{t-1}$ for $t\geq 2$,
and thus also concludes the proof of Lemma~\ref{lemma_upper_bound_OFW_anytime}.
\end{proof}

\begin{proof}[Proof of Theorem~\ref{thm_upper_bound_OFW_anytime}]
Let $t\geq 1$ be fixed. We prove the regret bound on $R_t$ in the theorem for this value of $t$.
Through this proof, we will denote by $s$ some time index ranging from $1$ to $t$.
First, using convexity of the cost functions $\ell_s$
(recall $g_s = \nabla\ell_s(x_s)$), we get:
\begin{equation*}
R_t \leq \sum_{s=1}^t \langle g_s, x_s - x_\star \rangle. 
\end{equation*}
We upper bound $\phi_s - \phi_{s-1}$ by applying 
Lemma~\ref{lemma_upper_bound_OFW_anytime} for $s\in\llbracket 2, t\rrbracket$
to the linearized cost functions $\tilde\ell_s : x \mapsto \langle g_s, x\rangle$,
and then summing those inequalities, we get
\begin{equation*}
\phi_t - \phi_1 \leq \frac{2 D}{L 3^{3/4}} \sum_{s=2}^t \frac{1}{s^{1/4}}\Vert g_s \Vert^2
	+ \frac{L}{D 3^{3/4}} \sum_{s=2}^t \frac{1}{(s+1)^{3/4}} \Vert x_s - v_s \Vert^2 .
\end{equation*}
Throughout the proof, we will repeatedly use the fact that the optimum of a constrained convex optimization problem 
$
x_* \in \arg\min_{x \in \mathcal{K}} f(x)
$
satisfies $\langle \nabla f(x_*), x_* - x \rangle \leq 0$ for all $x \in \mathcal{K}$. Using the optimality from the definition of $y_{t+1}$, we get:
\begin{equation*}
\sum_{s=1}^t \langle g_s, x_s - x_\star \rangle 
	- \frac{1}{2\eta_{t+1}} \Vert x_\star - x_1 \Vert^2
\leq \sum_{s=1}^t \langle g_s, x_s - y_{t+1} \rangle 
	- \frac{1}{2\eta_{t+1}} \Vert y_{t+1} - x_1 \Vert^2
\leq \phi_t .
\end{equation*}
Combining those four inequalities (recall $\eta_{t+1} = \frac{ 3^{3/4}D}{2 L (t+1)^{3/4}}$), we get:
\begin{equation*}
R_t
\leq  \frac{2 D}{L 3^{3/4}} \sum_{s=2}^t \frac{1}{s^{1/4}} \Vert g_s \Vert^2
	+ \frac{L}{D 3^{3/4}} \sum_{s=2}^t \frac{1}{(s+1)^{1/4}} \Vert x_s - v_s \Vert^2
	+ \frac{L (t+1)^{3/4}}{D 3^{3/4}} \Vert x_\star - x_1 \Vert^2
	+ \phi_1.
\end{equation*}
Using Cauchy-Schwarz inequality, and the bounds on the gradients and on the diameter,
we get $\phi_1 \leq (1+\frac{1}{6 \eta_2})LD \leq 1.12 LD$.
Moreover, as $\Vert g_s \Vert \leq L$ 
and $\Vert x_s - v_s  \Vert \leq D$ for all $s\in \llbracket 1,t \rrbracket$
and $\Vert x_\star - x_1 \Vert \leq D$,
we get:
\begin{equation}\label{eq:bound_anytime_OFW_with_sums}
R_t 
\leq  \frac{2 L D}{3^{3/4}} \sum_{s=2}^t \frac{1}{s^{1/4}} 
	+ \frac{L D}{ 3^{3/4}} \sum_{s=2}^t \frac{1}{(s+1)^{1/4}} 
	+ \frac{L D (t+1)^{3/4}}{3^{3/4}} 
	+ 1.12LD .
\end{equation} 
Using series-integral comparison, we get for any integers $1\leq a \leq b$:
\begin{equation*}
\sum_{t=a}^b \frac{1}{t^{1/4}} \leq \int_{a-1}^b u^{-1/4}\, \mathrm{d}u = \frac{4}{3} ( b^{3/4} - (a-1)^{3/4} ) .
\end{equation*}
Combining those inequalities, and using that $(1+x)^{3/4} \leq 1 + \frac{3}{4} x$ for $x\geq0$, we get:
\begin{align*}
R_t
& \leq \frac{2 L D}{3^{3/4}} \frac{4}{3}(t^{3/4} - 1)
	+ \frac{L D}{ 3^{3/4}} \frac{4}{3}((t+1)^{3/4} - 2^{3/4})
	+ \frac{L D (t+1)^{3/4}}{3^{3/4}} 
	+ 1.12LD \\
& \leq \frac{5}{3^{3/4}} LD t^{3/4}
	+ \left( 1.12 - \frac{8}{3} \frac{1}{3^{3/4}} - \frac{4}{3} \left(\frac{2}{3}\right)^{3/4} \right) LD
	+ \frac{L D}{ 3^{3/4}} \frac{1}{t^{1/4}} \\
& \leq \frac{5}{3^{3/4}} LD t^{3/4}
	-1.03 LD
	+ \frac{L D}{ 3^{3/4}} \frac{1}{t^{1/4}} \\
& \leq \frac{5}{3^{3/4}} LD t^{3/4} ,
\end{align*} 
where in the last inequality we use that $t\geq 1$.
This being true for all values of $t\geq 1$,
this concludes the proof of Theorem~\ref{thm_upper_bound_OFW_anytime}.
\end{proof}

\section{Detailed semidefinite formulations}\label{a:SDPs}
\subsection{Tractable formulation of~\texorpdfstring{\eqref{eq:PEP}}{(4)}}\label{a:pep}

As explained in Section~\ref{s:opt_param}, \eqref{eq:PEP} is a priori
an infinite-dimensional problem, as it includes functional variables
which are the losses $\ell_t$ and the indicator function of the feasible set $\cK$.
However, \eqref{eq:PEP} can be reformulated as finite dimensional 
linear semidefinite program.
In this section, we detail how to obtain this reformulation.

The first step consists in reformulating~\eqref{eq:PEP} as a finite-dimensional problem 
by sampling the losses $\ell_t$ at the query points $x_t$ and $x_\star$, 
and to treat only the responses $(\nabla \ell_t(x_t), \ell_t(x_t))$ and $(\nabla \ell_t(x_\star), \ell_t(x_\star))$ as variables. 
By appropriately constraining these responses, we can force them to be compatible with some losses $\ell_t$ satisfying the desired assumptions (convexity and Lipschitzness of $\ell_t$). 
Using an interpolation / extension theorem \cite[Theorem 3.3 and Equation (7)]{taylor2017exact},
it suffices to require that the samples are compatible through the subgradient inequalities
$\ell_t(x_t) - \ell_t(x_\star) \leq \langle \nabla \ell_t(x_t), x_t - x_\star \rangle$  
and $\ell_t(x_\star) - \ell_t(x_t) \leq \langle \nabla \ell_t(x_\star), x_\star - x_t \rangle$
and the Lipschitz bounds $\|\nabla \ell_t(x_t)\|\leq L$ for $t=1,\cdots, T$.
Indeed, as the subgradients $\nabla \ell_t(x_\star)$ are never used in~\eqref{eq:PEP}
their values do not matter, and thus we can impose 
$\nabla \ell_t(x_\star) = \nabla \ell_t(x_t)$ without changing the value of the problem
(that is, we get a tight reformulation of~\eqref{eq:PEP} and not just an upper bound).
This allows us to assume without loss of generality that the cost functions $\ell_t$
are all linear.
This corresponds to upper bound $\ell_t(x_t) - \ell_t(x_\star)$
by $\langle g_t, x_t - x_\star \rangle$ (recall $g_t=\nabla \ell_t(x_t)$) 
in the objective of~\eqref{eq:PEP}, and replace the cost functions variables $\ell_t$
by the gradient variables $g_t$.
Note that we get a tight reformulation of~\eqref{eq:PEP} and not just an upper bound
as any point (with linear cost functions) in this new reformulation is still feasible 
with the same objective value in the initial problem~\eqref{eq:PEP}.

As for handling the convex domain $\domain$, one possible approach is to sample its indicator function at the query points $v_t$ and $x_t$ for $t=1,\dots,T$ and $x_\star$ 
and impose similar compatibility constraints.
Indeed, denoting by $\iota_{\cK}$ the indicator function of the feasible set $\cK$,
and using \cite[Theorem 3.6]{taylor2017exact}, those compatibility constraints are:
for all $u, v\in \{ x_1,\dots,x_T, v_1, \dots, v_{T-1}, x_\star \}$, 
and $g \in \nabla \iota_{\cK} (u)$ a subgradient of $\iota_{\cK}$ at $u$,
we have $\langle g, v - u \rangle \leq 0$.
As the points $\{ (x_t, v_t, \mathrm{dir}_t) \}_{t=1,\dots,T}$ are generated by
\eqref{eq:OFW_general},
we get that $-\mathrm{dir}_t$ is a subgradient of $\iota_{\cK}$ at $v_t$ for $t=1,\dots,T-1$.
However, we have no special choice for the subgradients of $\iota_{\cK}$
at $x_1, \dots, x_T, x_\star$, and we will choose their subgradients to be $0$
(which leads to trivial constraints that we remove from the problem).
Those two function sampling steps give us the following finite-dimensional reformulation of~\eqref{eq:PEP}:
\begin{equation*}
\begin{aligned}
B_T(\{(\eta_{t,s},\beta_{t,s},\gamma_{t,s})\}_{t,s}) =
\sup_{\substack{\{g_t\}_{t=1,\ldots,T},\, x_\star\\ \{(x_t, v_t, \mathrm{dir}_t)\}_{t=1,\ldots,T}\\d\in\mathbb{N}}} \,
& \sum_{t=1}^T \langle g_t, x_t - x_\star \rangle \\
\text{subject to: } 
&  \Vert g_t \Vert \leq L \text{ for }t=1,\ldots,T,\\
& \langle - \mathrm{dit}_t, u - v_t \rangle \leq 0
    \text{ for $t=1,\dots,T-1$ } \\
& \qquad\qquad
    \text{and } u\in\{x_1,\dots,x_{T},v_1,\dots,v_{T-1},x_\star\}
    , \\
&\mathrm{Diam}(\{x_1,\dots,x_{T},v_1,\dots,v_{T-1},x_\star\})\leq D,\\
& \{(x_t,v_t,\mathrm{dir}_t)\}_{t=1,\ldots,T} \text{ is generated by~\eqref{eq:OFW_general}}.\\
\end{aligned}
\end{equation*}

As $x_2, \cdots, x_T$ are convex combination of $x_1, v_1, \cdots, v_{T-1}$,
we can omit them in the boundary and diameter constraints,
leading to the following simpler finite-dimensional reformulation of~\eqref{eq:PEP}:
\begin{equation}\label{eq:PEP_finite_dim}
\begin{aligned}
B_T(\{(\eta_{t,s},\beta_{t,s},\gamma_{t,s})\}_{t,s}) =
\!\!\!\!
\sup_{\substack{\{g_t\}_{t=1,\ldots,T},\, x_\star\\ \{(x_t, v_t, \mathrm{dir}_t)\}_{t=1,\ldots,T}\\d\in\mathbb{N}}} \,
&  \sum_{t=1}^T \langle g_t, x_t - x_\star \rangle \\
\text{subject to: } 
& \{(x_t, \mathrm{dir}_t)\}_{t=1,\ldots,T} \text{ compatible with~\eqref{eq:OFW_general}},\\
& \langle -\mathrm{dir}_t, u - v_t \rangle \leq 0 \text{ for all }t=1,\ldots,T-1 \\
& \qquad\qquad\quad 	
	\text{ and } u\in\{x_1, v_{1},\cdots, v_{T-1}, x_\star\}, \\ 
&\mathrm{Diam}(\{ x_1, v_1, \cdots, v_{T-1}, x_\star \})\leq D,\\
&  \Vert g_t \Vert \leq L \text{ for }t=1,\ldots,T. \\
\end{aligned}
\end{equation}

\providecommand{\transp}{\mathrm{T}}
\providecommand{\barVect}[1]{\bar{#1}}
\providecommand{\R}{\mathbb{R}}

Finally, this sampled version~\eqref{eq:PEP_finite_dim} of~\eqref{eq:PEP} can be lifted to a semidefinite program via a standard change of variables: 
all vectors and gradients appearing in~\eqref{eq:PEP_finite_dim} are replaced with their Gram matrix (which, recall, encodes all pairwise inner products between these vectors / gradients). 
As the problem is invariant by translation of the vectors $x_t$, $v_t$ and $x_\star$
and of the feasible set $\cK$, without loss of generality,
we can assume that $x_1 = 0$.
Let $G$ be the Gram matrix of the gradients $g_1,\cdots,g_T$
and the vectors $v_1,\cdots,v_{T-1}$ and $x_\star$, that is,
$G = P^\transp P$ with
$P = [ \, g_1 \, | \, \cdots \, | \, g_T \, | \, v_1 \, | \, \cdots \, | \, v_{T-1} \, | \, x_\star \, ]$.

Let $e_1,\dots,e_{2T}$ be the standard basis vectors of $\R^{2 T}$.
Define $\barVect{g}_t = e_t$ for $t=1,\dots,T$,
$\barVect{v}_t = e_{T+t}$ for $t=1,\dots,T-1$,
$\barVect{x}_\star = e_{2T}$ and $\barVect{x}_1 = 0$.
Thus, we get that $\barVect{g}_s^\transp G \barVect{g}_t = \langle g_s, g_t \rangle$
for all $s,t = 1,\dots,T$;
and similarly with the other vectors $\barVect{v}_t$
for $t=1,\dots,T-1$, $\barVect{x}_\star$ and $\barVect{x}_1$.
For two vectors $u$ and $v$ in $\R^{2T}$, we define
$u \odot v = ( u v^\transp + v u^\transp ) / 2$ their symmetric outer product.
We denote by $\tr$ the trace operator for square matrices.

The objective, as well as all constraints, then become linear functions of the entries of the Gram matrix $G$.
Indeed, note that for all $u,v\in \{ x_1, v_1, \cdots, v_{T-1}, x_\star \}$ and $t \in \llbracket 1, T \rrbracket$, we have:
\begin{align*}
    & \|g_t\|^2 = \barVect{g}_t^\transp G \bar g_t = \tr( \barVect{g}_t \odot \barVect{g}_t) \, G), \\     & \|u-v\|^2 =  \tr( ((\barVect{u}-\barVect{v}) \odot (\barVect{u}-\barVect{v})) \, G), \\
    & \langle g_t, x_t \rangle = \left\langle g_t, \sum_{s=1}^{t-1} \gamma_{t,s}\, v_s \right\rangle = \sum_{s=1}^{t-1} \tr((\barVect{g}_t \odot \barVect{v}_s)\, G), \\     & \langle g_t, x_* \rangle = \tr((\bar g_t \odot \bar x_\star)\, G)  \,.
\end{align*}
This gives us the following linear (in $G$) semidefinite program reformulation of~\eqref{eq:PEP_finite_dim}:
\begin{equation}\label{eq:PEP_SDP}
\begin{aligned}
B_T(\{(\eta_{t,s},\beta_{t,s},\gamma_{t,s})\}_{t,s}) = \sup_{\substack{ G \succeq 0 }} \,
&  \sum_{t=1}^T \sum_{s=1}^{t-1} \gamma_{t,s} \tr( (\barVect{g}_t \odot \barVect{v}_s)  \, G)
			-  \sum_{t=1}^T \tr( (\barVect{g}_t \odot \barVect{x}_\star) \, G) \\
\text{subject to: } 
& \barVect{\mathrm{dir}}_t = \sum_{s=1}^t \eta_{t,s}\, \barVect{g}_s 
						+ \sum_{s=1}^{t-1} \beta_{t,s}\, \barVect{v}_s
	\text{ for } t=1,\dots,T-1, \\
& \tr( (\barVect{\mathrm{dir}}_t \odot (\barVect{v}_t - u) ) \, G) \leq 0
	\text{ for all }t=1,\ldots,T-1 \\
& \qquad\qquad\quad 	
	\text{ and } u\in\{\barVect{x}_1, \barVect{v}_{1},\cdots, \barVect{v}_{T-1}, \barVect{x}_\star\}, \\ 
& \tr( ((u-v) \odot (u-v)) \, G)  \leq D^2 \\
& \qquad\qquad\quad 	
\text{ for } u,\, v \in 
	\{ \barVect{x}_1, \barVect{v}_1, \cdots, \barVect{v}_{T-1}, \barVect{x}_\star \}, \\
&  \tr( (\barVect{g}_t \odot \barVect{g}_t) \, G)  \leq L^2 \text{ for }t=1,\ldots,T. \\
\end{aligned}
\end{equation}
Note that the variable $d\in\mathbb{N}$ for the dimension of the feasible set $\cK$
(and of all the gradients and vectors)
that appears in~\eqref{eq:PEP} and~\eqref{eq:PEP_finite_dim}
induces a constraint imposing that the rank of the matrix $G$ is at most $d$.
However, this constraint disappears when taking the supremum over all values of $d\in\mathbb{N}$.

\providecommand{\lambdaLip}{\lambda^{\mathrm{Lip}}}
\providecommand{\lambdaDiam}{\lambda^{\mathrm{Diam}}}
\providecommand{\lambdaBound}{\lambda^{\mathrm{Brd}}}

As explained in Remark~\ref{rem_proofs_from_PEP},
for given numerical values of $T,L,D$ and the algorithm parameters,
solving the tractable convex problem~\eqref{eq:PEP_SDP}
allows us to get worst-case examples
giving \emph{algorithm-dependent lower bounds} on the worst-case regret.
In order to obtain \emph{algorithm-dependent upper bounds} on the worst-case regret, a natural procedure consists in formulating the \emph{Lagrange dual} of~\eqref{eq:PEP} (which is also a semidefinite program; see, e.g.,~\cite{vandenberghe1996semidefinite,boyd2004convex}), whose feasible points naturally corresponds to upper bounds on the regret. 
In this context, \emph{finding a proof} consists in finding a feasible point to the dual problem~\cite{goujaud2023fundamental}. 
In particular, it is useful to reformulate~\eqref{eq:PEP_SDP} as its Lagrange dual
which is the following semidefinite problem:
\begin{equation}\label{eq:PEP_SDP_dual}
\begin{aligned}
&
\begin{aligned}
B_T(\{(\eta_{t,s},\beta_{t,s},\gamma_{t,s})\}_{t,s}) = \inf_{\substack{ \lambdaLip\geq 0 \\ \lambdaDiam\geq 0 \\ \lambdaBound \geq 0 }} \,
&  \sum_{t=1}^T \lambdaLip_t\, L^2 
+ \frac{1}{2} \sum_{u,\, v \in 
	\{ \barVect{x}_1, \barVect{v}_1, \cdots, \barVect{v}_{T-1}, \barVect{x}_\star \}}
		\lambdaDiam_{\{u,v\}}\, D^2
\\
\text{subject to: } 
& S(\eta,\beta,\gamma; \lambda) \succeq 0, \\
\end{aligned}\\
&
\begin{aligned}
 \text{ where }  S(\eta,\beta,\gamma; \lambda) = 
	& \ \frac{1}{2} \sum_{u,\, v \in 
	\{ \barVect{x}_1, \barVect{v}_1, \cdots, \barVect{v}_{T-1}, \barVect{x}_\star \}}
		\lambdaDiam_{\{u,v\}}\,  ((u-v) \odot (u-v)) \\
&  + \sum_{t=1}^T \lambdaLip_t\, (\barVect{g}_t \odot \barVect{g}_t)  
+ \sum_{t=1}^{T-1} \sum_{u\in\{\barVect{x}_1, \barVect{v}_{1},\cdots, \barVect{v}_{T-1}, \barVect{x}_\star \}} \!\!\!\!\!\!
	\lambdaBound_{{\barVect{v}_t}, u}  \,
		(\barVect{\mathrm{dir}}_t \odot (\barVect{v}_t - u) ) \\
&  - \sum_{t=1}^T \sum_{s=1}^{t-1} \gamma_{t,s} \, (\barVect{g}_t \odot \barVect{v}_s)
			+  \sum_{t=1}^T (\barVect{g}_t \odot \barVect{x}_\star), \\
\end{aligned}\\
&
\text{ and }
\lambda = (\lambdaLip, \lambdaDiam, \lambdaBound) \text{ and }
\barVect{\mathrm{dir}}_t = \sum_{s=1}^t \eta_{t,s}\, \barVect{g}_s 
						+ \sum_{s=1}^{t-1} \beta_{t,s}\, \barVect{v}_s
	\text{ for } t=1,\dots,T-1 .
\end{aligned}
\end{equation}

\subsection{Joint stepsize optimization; semidefinite formulation of~\eqref{eq:jointstepsizeopt}}\label{a:stepsizeopt}

As mentioned in Section~\ref{s:opt_param},
a natural path forward is to use~\eqref{eq:PEP} to obtain worst-case optimal algorithms. That is, by solving
\[ \min_{\{(\eta_{t,s},\beta_{t,s},\gamma_{t,s})\}_{t,s}} \left\{B_T(\{(\eta_{t,s},\beta_{t,s},\gamma_{t,s})\}_{t,s}) \text{ s.t. } \sum_{s=1}^{t-1}\gamma_{t,s}\leq 1,\, \gamma_{t,s}\geq 0\right\}.\]
Using \eqref{eq:PEP_SDP_dual}, this problem can be reformulated as
a linear optimization problem with a bilinear matrix inequality constraint, which is unfortunately NP-hard in general~\cite{toker1995np}.
The bilinearity in the matrix inequality $S(\eta,\beta,\gamma; \lambda) \succeq 0$
is due to terms 
$\lambdaBound_{{\barVect{v}_t}, u}  \, 
	(\barVect{\mathrm{dir}}_t \odot (\barVect{v}_t - u) )$
appearing in the definition of $S(\eta,\beta,\gamma; \lambda)$
which are bilinear in $(\eta,\beta)$ and $\lambdaBound$.
A classical approach (see~\cite{droriPerformanceFirstorderMethods2014}) 
to circumvent bilinearity in the matrix inequalities
consists in using convex relaxation of the problem,
which works well in our case as we explained in Section~\ref{s:opt_param}
and we detail in this section.
Note that another possible approach (see \cite{dasguptaBranchandboundPerformanceEstimation2024})
consists in adapting a branch-and-bound algorithm to compute the best possible regret guarantee by:
(i)~dividing the search space into regions, 
(ii)~computing upper and lower bounds on the best possible regret guarantee 
for each region via convex relaxations of the problem,
(iii)~discarding regions whose lower bound is larger than the best (across all regions) current upper bound as those regions cannot contain the optimal point / value,
and (iv)~repeating steps (i)--(iii) with the remaining regions until convergence.
However, this branch-and-bound approach is numerically more costly
and is not necessary when the direct convex relaxation method works.

For this reason, we propose a slight relaxation of $B_T(\{(\eta_{t,s},\beta_{t,s},\gamma_{t,s})\}_{t,s})$ which corresponds to removing a few constraints from~\eqref{eq:PEP} (numerically observed to be inactive). More precisely, this relaxation is obtained through: (i)~we observe that all $x_t$ for $t=2, \cdots, T$ are in the convex hull of $x_1, v_1, \cdots, v_{T-1}$, and thus the domain constraints for $\cK$ are imposed only on vectors $x_1, v_1, \cdots, v_{T-1}, x_\star$; (ii)~we keep only the boundary constraints corresponding to the optimality of $v_t$ compared with $v_{t+1}, \cdots, v_{T-1}, x_\star$:
this leads to the definition of $W_T\bigl(\{\eta_{t,s},\beta_{t,s},\gamma_{t,s}\}_{t,s}\bigr)$
in Section~\ref{s:opt_param} with 
$W_T\bigl(\{\eta_{t,s},\beta_{t,s},\gamma_{t,s}\}_{t,s}\bigr)\geq B_T\bigl(\{\eta_{t,s},\beta_{t,s},\gamma_{t,s}\}_{t,s}\bigr)$. 
In terms of the semidefinite program reformulation~\eqref{eq:PEP_SDP_dual}
of~\eqref{eq:PEP}, this relaxation corresponds to impose
$\lambdaBound_{\barVect{v}_t, u} = 0$ for all $t=1,\dots,T-1$ and
$u\in \{ \barVect{v}_{t+1}, \cdots, \barVect{v}_{T-1}, \barVect{x}_\star \}$,
giving us:
\begin{equation}\label{eq:PEP_relaxed_SDP_dual}
\begin{aligned}
&
\begin{aligned}
W_T(\{(\eta_{t,s},\beta_{t,s},\gamma_{t,s})\}_{t,s}) = \inf_{\substack{ \lambdaLip\geq 0 \\ \lambdaDiam\geq 0 \\ \lambdaBound \geq 0 }} \,
&  \sum_{t=1}^T \lambdaLip_t\, L^2 
+ \frac{1}{2} \sum_{u,\, v \in 
	\{ \barVect{x}_1, \barVect{v}_1, \cdots, \barVect{v}_{T-1}, \barVect{x}_\star \}}
		\lambdaDiam_{\{u,v\}}\, D^2
\\
\text{subject to: } 
& S(\eta,\beta,\gamma; \lambda) \succeq 0, \\
\end{aligned}\\
&
\begin{aligned}
 \text{ where }  S(\eta,\beta,\gamma; \lambda) = 
	& \ \frac{1}{2} \sum_{u,\, v \in 
	\{ \barVect{x}_1, \barVect{v}_1, \cdots, \barVect{v}_{T-1}, \barVect{x}_\star \}}
		\lambdaDiam_{\{u,v\}}\,  ((u-v) \odot (u-v)) \\
&  + \sum_{t=1}^T \lambdaLip_t\, (\barVect{g}_t \odot \barVect{g}_t)  
+ \sum_{t=1}^{T-1} \sum_{u\in\{\barVect{v}_{t+1},\cdots, \barVect{v}_{T-1}, \barVect{x}_\star \}} \!\!\!\!\!\!
	\lambdaBound_{{\barVect{v}_t}, u}  \,
		(\barVect{\mathrm{dir}}_t \odot (\barVect{v}_t - u) ) \\
&  - \sum_{t=1}^T \sum_{s=1}^{t-1} \gamma_{t,s} \, (\barVect{g}_t \odot \barVect{v}_s)
			+  \sum_{t=1}^T (\barVect{g}_t \odot \barVect{x}_\star), \\
\end{aligned}\\
&
\text{ and }
\lambda = (\lambdaLip, \lambdaDiam, \lambdaBound) \text{ and }
\barVect{\mathrm{dir}}_t = \sum_{s=1}^t \eta_{t,s}\, \barVect{g}_s 
						+ \sum_{s=1}^{t-1} \beta_{t,s}\, \barVect{v}_s
	\text{ for } t=1,\dots,T-1 .
\end{aligned}
\end{equation}

Using this problem, the joint minimization problem
\begin{equation}\tag{\ref{eq:jointstepsizeopt}}
     \min_{\{(\eta_{t,s},\beta_{t,s},\gamma_{t,s})\}_{t,s}} \left\{W_T(\{(\eta_{t,s},\beta_{t,s},\gamma_{t,s})\}_{t,s}) \text{ s.t. } \sum_{s=1}^{t-1}\gamma_{t,s}\leq 1,\, \gamma_{t,s}\geq 0\right\}
\end{equation}
is still \emph{a priori} a linear optimization problem with a bilinear matrix inequality constraint
(which recall are unfortunately NP-hard in general~\cite{toker1995np})
due to the presence in $S(\eta,\beta,\gamma; \lambda)$
of the bilinear (in $(\eta,\beta)$ and $\lambdaBound$) term
(where abusing notations, we write $\barVect{v}_T = \barVect{x}_\star$):
\begin{multline*}
\sum_{t=1}^{T-1} \sum_{u\in\{\barVect{v}_{t+1},\cdots, \barVect{v}_{T-1}, \barVect{x}_\star \}} \!\!\!\!\!\!
	\lambdaBound_{{\barVect{v}_t}, u}  \,
		(\barVect{\mathrm{dir}}_t \odot (\barVect{v}_t - u) ) \\
\begin{aligned}
= & 
\sum_{t=1}^{T-1} \sum_{s=1}^t
\sum_{j=t+1}^T
	\lambdaBound_{{\barVect{v}_t}, \barVect{v}_j}  \,
		\eta_{t,s} \, (\barVect{g}_s \odot (\barVect{v}_t - \barVect{v}_j) ) 
+ \sum_{t=1}^{T-1} \sum_{s=1}^{t-1}
\sum_{j=t+1}^T
	\lambdaBound_{{\barVect{v}_t}, \barVect{v}_j}  \,
		\beta_{t,s} \, (\barVect{v}_s \odot (\barVect{v}_t - \barVect{v}_j) ) .
\end{aligned}
\end{multline*}
However, we remark that:
\begin{align*}
\sum_{t=1}^{T-1} \sum_{s=1}^t \sum_{j=t+1}^T
	\lambdaBound_{{\barVect{v}_t}, \barVect{v}_j}  \,
		\eta_{t,s} \, (\barVect{g}_s \odot (\barVect{v}_t - \barVect{v}_j) ) = & 
\sum_{t=1}^{T-1} \sum_{s=1}^t \left( \sum_{j=t+1}^T
	\lambdaBound_{{\barVect{v}_t}, \barVect{v}_j}  \,
		\eta_{t,s} \right) (\barVect{g}_s \odot \barVect{v}_t )
- \sum_{j=1}^T \sum_{s=1}^j \sum_{t=s}^{j-1}
	\lambdaBound_{{\barVect{v}_t}, \barVect{v}_j}  \,
		\eta_{t,s} \, (\barVect{g}_s \odot \barVect{v}_j)		
\\
= & 
\sum_{t=1}^{T-1} \sum_{s=1}^t \left( \sum_{j=t+1}^T
	\lambdaBound_{{\barVect{v}_t}, \barVect{v}_j}  \,
		\eta_{t,s} 
- \sum_{j=s}^{t-1} \lambdaBound_{{\barVect{v}_j}, \barVect{v}_t}  \,
		\eta_{j,s} \right) (\barVect{g}_s \odot \barVect{v}_t)	,	
\end{align*}
and:
\begin{align*}
\sum_{t=1}^{T-1} \sum_{s=1}^{t-1} \sum_{j=t+1}^T
	\lambdaBound_{{\barVect{v}_t}, \barVect{v}_j}  \,
		\beta_{t,s} \, (\barVect{v}_s \odot (\barVect{v}_t - \barVect{v}_j) ) = & 
\sum_{t=1}^{T-1} \sum_{s=1}^{t-1} \left( \sum_{j=t+1}^T
	\lambdaBound_{{\barVect{v}_t}, \barVect{v}_j}  \,
		\beta_{t,s} \right) (\barVect{v}_s \odot \barVect{v}_t )
- \sum_{j=1}^T \sum_{s=1}^{j-1} \sum_{t=s+1}^{j-1}
	\lambdaBound_{{\barVect{v}_t}, \barVect{v}_j}  \,
		\beta_{t,s} \, (\barVect{v}_s \odot \barVect{v}_j)		
\\
= & 
\sum_{t=1}^{T-1} \sum_{s=1}^{t-1} \left( \sum_{j=t+1}^T
	\lambdaBound_{{\barVect{v}_t}, \barVect{v}_j}  \,
		\beta_{t,s} 
- 	\sum_{j=s+1}^{t-1} \lambdaBound_{{\barVect{v}_j}, \barVect{v}_t}  \,
		\beta_{j,s} \right) (\barVect{v}_s \odot \barVect{v}_t)		.
\end{align*}

This motivates the following change of variables in \eqref{eq:PEP_relaxed_SDP_dual}
which allows us to recast~\eqref{eq:jointstepsizeopt}
as a linear convex semidefinite program again:
$\eta$, $\beta$ and $\lambdaBound$ are replaced by:
\begin{equation}\label{eq:SDP_opt_param_change_vars}
\begin{aligned}
& B_{t,s} 
= \eta_{t,s} \sum_{j=t+1}^{T} \lambdaBound_{\barVect{v}_t, \barVect{v}_j}
		- \sum_{j=s}^{t-1} \eta_{j,s}\,  \lambdaBound_{\barVect{v}_j, \barVect{v}_t}
& \qquad \forall 1 \leq s \leq t \leq T, 
\\
& C_{t,s} 
= \beta_{t,s} \sum_{j=t+1}^{T} \lambdaBound_{\barVect{v}_t, \barVect{v}_j}
		- \sum_{j=s+1}^{t-1} \beta_{j,s}\,  \lambdaBound_{\barVect{v}_j, \barVect{v}_t}
& \qquad \forall 1 \leq s < t \leq T.
\end{aligned}
\end{equation}
Note that the other variables, that is, $\gamma$, $\lambdaLip$ and $\lambdaDiam$,
are left unchanged. 
Also note that from the definition of $B_{t,s}$ and $C_{t,s}$, we get that
they must satisfy the constraints
$\sum_{s=t}^T B_{t,s} = 0$ and $\sum_{s=t+1}^T B_{t,s} = 0$
for all $t=1,\dots,T$.
Thus, \eqref{eq:jointstepsizeopt} can be reformulated as the following
linear convex semidefinite program:
\begin{equation}\label{eq:PEP_relaxed_joint_opt}
\begin{aligned}
&
\begin{aligned}
\inf_{\{(B_{t,s},C_{t,s},\gamma_{t,s})\}_{t,s}}
\inf_{\substack{ \lambdaLip\geq 0 \\ \lambdaDiam\geq 0 \\ \lambdaBound \geq 0 }} \,
&  \sum_{t=1}^T \lambdaLip_t\, L^2 
+ \frac{1}{2} \sum_{u,\, v \in 
	\{ \barVect{x}_1, \barVect{v}_1, \cdots, \barVect{v}_{T-1}, \barVect{x}_\star \}}
		\lambdaDiam_{\{u,v\}}\, D^2
\\
\text{subject to: } 
& S(B,C,\gamma; \lambda) \succeq 0, \\
& \sum_{t=s}^T B_{t,s} = 0 \text{ and }
	\sum_{t=s+1}^T C_{t,s} = 0 \text{ for } s=1,\ldots,T \\
\end{aligned}\\
&
\begin{aligned}
 \text{ where }  S(B,C,\gamma; \lambda) = 
	& \ \frac{1}{2} \sum_{u,\, v \in 
	\{ \barVect{x}_1, \barVect{v}_1, \cdots, \barVect{v}_{T-1}, \barVect{x}_\star \}}
		 \lambdaDiam_{\{u,v\}}\,  ((u-v) \odot (u-v)) \\
& + \sum_{t=1}^T \lambdaLip_t\, (\barVect{g}_t \odot \barVect{g}_t)   + \sum_{t=1}^{T} \sum_{s=1}^t B_{t,s} \, (\barVect{g}_s \odot \barVect{v}_t)
+ \sum_{t=1}^{T} \sum_{s=1}^{t-1} C_{t,s} \, (\barVect{v}_s \odot \barVect{v}_t) \\
&  - \sum_{t=1}^T \sum_{s=1}^{t-1} \gamma_{t,s} \, (\barVect{g}_t \odot \barVect{v}_s)
			+  \sum_{t=1}^T (\barVect{g}_t \odot \barVect{x}_\star) , \\
\end{aligned}\\
&
\text{ and }
\lambda = (\lambdaLip, \lambdaDiam, \lambdaBound) .
\end{aligned}
\end{equation}

Note that there are $T^2$ of the variables $\{ B_{t,s}, C_{t,s} \}_{t,s}$,
while there was $(T-1)^2 + T(T-1)/2$ of the variables $\eta$, $\beta$ and $\lambdaBound$;
this is because problem~\eqref{eq:PEP_relaxed_SDP_dual} was overparametrized.
Thus, when inverting the change of variables,
for given values of $\{ B_{t,s}, C_{t,s} \}_{t,s}$,
there exist several solutions for $\eta$, $\beta$ and $\lambdaBound$
solving~\eqref{eq:SDP_opt_param_change_vars}.
We propose one solution for inverting~\eqref{eq:SDP_opt_param_change_vars} 
giving simple algorithms:
(i)~we choose $\eta_{t,s} = 1$ for all $1 \leq s \leq t \leq T$,
(ii)~we solve for $\lambdaBound$ using the first line of~\eqref{eq:SDP_opt_param_change_vars}
substituting in the value of $\eta$,
(iii)~we solve for $\beta$ using the second line of~\eqref{eq:SDP_opt_param_change_vars}
substituting in the value of $\lambdaBound$;
that is:
\begin{equation}\label{eq:param_solve_change_vars}
\begin{aligned}
& \eta_{t,s} = 1 & \qquad \forall 1 \leq s \leq t \leq T, \\
& \lambdaBound_{\barVect{v}_s, \barVect{v}_t} = B_{t,s+1} - B_{t,s}
	& \qquad \forall 1 \leq s < t \leq T, \\
& \beta_{t,s} 
= \frac{1}{B_{t,t}} \left(
	C_{t,s} 
	+ \sum_{j=s+1}^{t-1} \beta_{j,s}\,  \lambdaBound_{\barVect{v}_j, \barVect{v}_t}
\right)
& \qquad \forall 1 \leq s < t \leq T.
\end{aligned}
\end{equation}

\subsection{Design and optimization of the proof of Theorem~\ref{thm_upper_bound_OFW}}
\label{a:proof_design}

As we explained in Section~\ref{sec:design_proof_theo},
we used variants of~\eqref{eq:PEP} to design the potential-based proof
of Section~\ref{sec:theo_proof} and to jointly optimize the algorithm and potential parameters for this proof.
We outlined our general method in Section~\ref{sec:design_proof_theo},
and here we present the details for 
rewriting~\eqref{eq:PEP-potential} as a semidefinite program,
numerically solving it, and then for obtaining the optimal parameters
given at the end of Section~\ref{sec:design_proof_theo}.

We start by restating the $1$-iteration inner maximization problem from~\eqref{eq:PEP-potential}
which upper bounds the potential increase 
for an abstract $t$ and for given pontential parameters $a$ and $b$:
\begin{equation}\label{eq:PEP-potential_inner}
\begin{aligned}
B_t(\eta,\sigma,a,b) \triangleq
\sup_{\substack{ \cK,\, d\in\mathbb{N}\\
    g_t,\, G_{t-1} \\
    x_1,\, x_t,\, v_t,\, x_{t+1} \\ y_t,\, y_{t+1}  }} \,
& \phi_t - \phi_{t-1} \\
\text{subject to: } 
& \phi_t - \phi_{t-1} \text{ is generated from }
    \{x_t, x_{t+1}, y_t, y_{t+1}, g_t, G_{t-1}, x_1\} \text{  by \eqref{eq_def_general_pot}}, \\
&  \Vert g_t \Vert \leq L ,\\
& \cK \text{ is a non-empty closed convex set of $\mathbb{R}^d$,}\\ 
&\mathrm{Diam}(\{x_1, x_t, v_t, x_{t+1}, y_t, y_{t+1} \})\leq D,\\
& (x_{t+1}, v_t) \text{ are generated from } \{x_1, x_t, g_t, G_{t-1}\} \text{ by Algorithm~\ref{OFW_alg_new}},\\
& y_t \text{ and } y_{t+1} \text{ are generated from } \{x_1, g_t, G_{t-1}\} \text{ by FTRL}.\\
\end{aligned}
\end{equation}
In particular, we have that~\eqref{eq:PEP-potential} is equal to
the minimum of $B_t(\eta,\sigma,a,b)$ over $a\geq 0$ and $b\geq 0$.

Note that $x_{t+1}$ can be removed from the boundary conditions
as it is combination of $v_t$ and $x_t$.
Substituting in the definition of $\phi_t-\phi_{t-1}$
and the optimality condition of $v_t$, $y_t$ and $y_{t+1}$,
we can get rid of $\cK$, and we obtain the following self-containing reformulation
of~\eqref{eq:PEP-potential_inner}:
\begin{equation}\label{eq:PEP-potential_inner_2}
\begin{aligned}
B_t(\eta,\sigma,a,b) = 
\sup_{\substack{ d\in\mathbb{N}\\
    g_t,\, G_{t-1} \\
    x_1,\, x_t,\, v_t,\, x_{t+1} \\ y_t,\, y_{t+1}  }} \,
& 
\left\{
\begin{aligned} &
\langle g_t, x_t - y_{t+1} \rangle + \langle G_{t-1}, y_t - y_{t+1} \rangle \\
&	+ a ( \Vert x_{t+1} - y_{t+1}\Vert^2 - \Vert x_{t} - y_{t}\Vert^2) \\
&	+  b\, \eta(  \langle G_{t-1}+g_t, x_{t+1} - y_{t+1} \rangle 
				- \langle G_{t-1}, x_{t} - y_{t} ) \rangle  \\
&	+ \frac{b}{2} \bigl( \Vert x_{t+1} - x_1 \Vert^2 - \Vert y_{t+1} - x_1 \Vert^2 \bigr. \\
				& \qquad\qquad \bigl.
				- \Vert x_{t} - x_1 \Vert^2 + \Vert y_{t} - x_1 \Vert^2  \bigr) \\
&	+ \frac{1}{2\eta} ( \Vert y_{t} - x_1 \Vert^2 - \Vert y_{t+1} - x_1 \Vert^2 )
\end{aligned}\right\}
\\
\text{subject to: } 
&  \Vert g_t \Vert \leq L ,\\
&\mathrm{Diam}(\{x_1, x_t, v_t, y_t, y_{t+1} \})\leq D,\\
& x_{t+1} = \sigma v_t + (1-\sigma) x_t, \\
& \langle \eta \, G_{t-1} + (y_t - x_1), y_t - u \rangle \leq 0
	\text{ for } u \in \{ x_t, v_t, y_{t+1} \},\\ 
& \langle \eta \, (G_{t-1} + g_t) + (y_{t+1} - x_1), y_{t+1} - u \rangle \leq 0
	\text{ for } u \in \{ x_t, v_t, y_t \},\\
& \langle \eta \, (G_{t-1} + g_t) + (x_t - x_1), v_t - u \rangle \leq 0
	\text{ for } u \in \{ x_t, y_t, y_{t+1} \}.\\
\end{aligned}
\end{equation}

Without loss of generality, we assume that $x_1 = 0$.
Then, using the same method as detailed in Appendix~\ref{a:pep},
we can reformulate this problem as a linear convex semidefinite program.
To this end, we do a change of variables in~\eqref{eq:PEP-potential_inner_2},
replacing the gradients $G_{t-1}$ and $g_t$ and the vectors $x_t$, $v_t$, $y_t$
and $y_{t+1}$ by their Gram matrix $H$.
Let $e_1, \ldots, e_6$ be the standard basis vectors of $\R^6$.
Define $\barVect{g}_t = e_1$, $\barVect{G}_{t-1} = e_2$, $\barVect{x}_t = e_3$,
$\barVect{v}_t = e_4$, $\barVect{y}_t = e_5$ and $\barVect{y}_{t+1} = e_6$.
Thus, we have $\barVect{g}_t^{\transp} H \barVect{x}_t = \langle g_t, x_t \rangle$,
and similarly with the others gradients / vectors.
Define the matrix $A_{\mathrm{obj}}(\eta,\sigma,a,b)$, 
which is linear in $(a,b)$ but not in $(\eta,\sigma,a,b)$, as:
\begin{equation}
\begin{aligned} 
A_{\mathrm{obj}}(\eta,\sigma,a,b)
\ \triangleq & \
\barVect{g}_t \odot (\barVect{x}_t - \barVect{y}_{t+1}) 
	+ \barVect{G}_{t-1} \odot (\barVect{y}_t - \barVect{y}_{t+1}) \\
&	+ a ( (\barVect{x}_{t+1} - \barVect{y}_{t+1}) \odot (\barVect{x}_{t+1} - \barVect{y}_{t+1})
		-  (\barVect{x}_{t} - \barVect{y}_{t}) \odot (\barVect{x}_{t} - \barVect{y}_{t}) ) \\
&	+  b\, \eta(  
	( \barVect{G}_{t-1}+\barVect{g}_t)\odot (\barVect{x}_{t+1} - \barVect{y}_{t+1})
		- \barVect{G}_{t-1} \odot (\barVect{x}_{t} - \barVect{y}_{t}) ) \\
&	+ \frac{b}{2} ( \barVect{x}_{t+1} \odot \barVect{x}_{t+1} 
					- \barVect{y}_{t+1} \odot \barVect{y}_{t+1}
				- \barVect{x}_{t} \odot \barVect{x}_{t} 
					+ \barVect{y}_{t} \odot \barVect{y}_{t}  ) \\
&	+ \frac{1}{2\eta} ( \barVect{y}_{t} \odot \barVect{y}_{t} 
					-\barVect{y}_{t+1} \odot \barVect{y}_{t+1} ) ,
\end{aligned}
\end{equation}
where $\barVect{x}_{t+1} = \sigma \barVect{v}_t + (1-\sigma) \barVect{x}_t$.
Also define:
\begin{align*}
& \mathrm{dir}_{\barVect{v}_t} 
	= \eta \, \barVect{G}_{t-1} + \eta \, \barVect{g}_t + \barVect{x}_{t}, \\
& \mathrm{dir}_{\barVect{y}_t} 
	= \eta \, \barVect{G}_{t-1}  + \barVect{y}_{t}, \\
& \mathrm{dir}_{\barVect{y}_{t+1}} 
	= \eta \, \barVect{G}_{t-1} + \eta \, \barVect{g}_t + \barVect{y}_{t+1}.
\end{align*}
With this change of variables, \eqref{eq:PEP-potential_inner_2}
can be reformulated as the following linear convex semidefinite program:
\begin{equation}\label{eq:PEP-potential_inner_3}
\begin{aligned}
B_t(\eta,\sigma,a,b) =
\sup_{H \succeq 0} \,
& 
\tr( A_{\mathrm{obj}} \, H) 
\\
\text{subject to: } 
&  \tr( (\barVect{g}_t \odot \barVect{g}_t)\, H) \leq L^2 ,\\
& \tr( (u \odot v)\, H) \leq D^2
	\text{ for } u,v \in \{  \barVect{x}_1, \barVect{x}_t, \barVect{v}_t, \barVect{x}_{t+1}, \barVect{y}_t, \barVect{y}_{t+1} \} , \\
& \tr\bigl( (\mathrm{dir}_{u} \odot (u - v)) \, H \bigr) \leq 0
	\text{ for } u \in \{ \barVect{v}_t, \barVect{y}_t, \barVect{y}_{t+1} \} \\
& \qquad\qquad\qquad\qquad\qquad\qquad\!\!
	\text{ and } v \in \{ \barVect{x}_1, \barVect{x}_t, \barVect{v}_t, \barVect{y}_t, \barVect{y}_{t+1} \} \setminus \{u\}.\\ 
\end{aligned}
\end{equation}

We then reformulate~\eqref{eq:PEP-potential_inner_3} as its Lagrange dual, giving us:
\begin{equation}\label{eq:PEP-potential_inner_4}
\begin{aligned}
&
\begin{aligned}
B_t(\eta,\sigma,a,b) =
\inf_{\substack{ \lambdaLip\geq 0 \\ \lambdaDiam\geq 0 \\ \lambdaBound \geq 0 }} \,
&  \lambdaLip\, L^2 
+ \frac{1}{2} \sum_{u,\, v \in 
	\{  \barVect{x}_1, \barVect{x}_t, \barVect{v}_t, \barVect{y}_t, \barVect{y}_{t+1} \}}
		\lambdaDiam_{\{u,v\}}\, D^2
\\
\text{subject to: } 
& S(\eta,\sigma,a,b; \lambda) \succeq 0, \\
\end{aligned}\\
&
\begin{aligned}
 \text{ where }  S(\eta,\sigma,a,b; \lambda) = 
	& \ \frac{1}{2} \sum_{u,\, v \in 
	\{  \barVect{x}_1, \barVect{x}_t, \barVect{v}_t, \barVect{y}_t, \barVect{y}_{t+1} \}}
		\lambdaDiam_{\{u,v\}}\,  ((u-v) \odot (u-v)) \\
&  
+ \sum_{u\in\{  \barVect{v}_t, \barVect{y}_t, \barVect{y}_{t+1} \}} 
	\sum_{v\in\{  \barVect{x}_1, \barVect{x}_t, \barVect{v}_t, \barVect{y}_t, \barVect{y}_{t+1} \} \setminus \{u\} } \!\!\!\!\!\!
	\lambdaBound_{u, v}  \,
		(\barVect{\mathrm{dir}}_u \odot (u - v) ) \\
& + \lambdaLip\, (\barVect{g}_t \odot \barVect{g}_t)   - A_{\mathrm{obj}}(\eta,\sigma,a,b) , \\
\end{aligned}\\
&
\text{ and }
\lambda = (\lambdaLip, \lambdaDiam, \lambdaBound) . \end{aligned}
\end{equation}

\providecommand{\lambdaP}{\tilde\lambda}

Now, jointly minimizing \eqref{eq:PEP-potential_inner_4} in $(a,b)$ and $\lambda$
for fixed $(\eta,\sigma)$, we get a reformulation of~\eqref{eq:PEP-potential}
as the following linear convex semidefinite program:
\begin{equation}\label{eq:PEP-potential_SDP}
\begin{aligned}
&
\begin{aligned}
\inf_{a\geq 0, b\geq 0}
\inf_{\substack{ \lambdaLip\geq 0 \\ \lambdaDiam\geq 0 \\ \lambdaBound \geq 0 }} \,
&  \lambdaLip\, L^2 
+ \frac{1}{2} \sum_{u,\, v \in 
	\{  \barVect{x}_1, \barVect{x}_t, \barVect{v}_t, \barVect{y}_t, \barVect{y}_{t+1} \}}
		\lambdaDiam_{\{u,v\}}\, D^2
\\
\text{subject to: } 
& S(\eta,\sigma,a,b; \lambda) \succeq 0, \\
\end{aligned}\\
&
\begin{aligned}
 \text{ where }  S(\eta,\sigma,a,b; \lambda) = 
	& \ \frac{1}{2} \sum_{u,\, v \in 
	\{  \barVect{x}_1, \barVect{x}_t, \barVect{v}_t, \barVect{y}_t, \barVect{y}_{t+1} \}}
		\lambdaDiam_{\{u,v\}}\,  ((u-v) \odot (u-v)) \\
&  
+ \sum_{u\in\{  \barVect{v}_t, \barVect{y}_t, \barVect{y}_{t+1} \}} 
	\sum_{v\in\{  \barVect{x}_1, \barVect{x}_t, \barVect{v}_t, \barVect{y}_t, \barVect{y}_{t+1} \} \setminus \{u\} } \!\!\!\!\!\!
	\lambdaBound_{u, v}  \,
		(\barVect{\mathrm{dir}}_u \odot (u - v) ) \\
& + \lambdaLip\, (\barVect{g}_t \odot \barVect{g}_t)   - A_{\mathrm{obj}}(\eta,\sigma,a,b) , \\
\end{aligned}\\
&
\text{ and }
\lambda = (\lambdaLip, \lambdaDiam, \lambdaBound) . \end{aligned}
\end{equation}
Note that the size of~\eqref{eq:PEP-potential_SDP} does not depend on $T$,
which allows efficient numerical solving even for large values of $T$.
Numerically solving~\eqref{eq:PEP-potential_SDP} gives us that for
jointly optimal $(a,b)$ and $\lambda$ we have $b=0$,
$\lambdaDiam_{\{u,v\}} = 0$ except for
$\lambdaDiam_{\{\barVect{x}_t, \barVect{v}_t\}} = a \sigma^2$,
$\lambdaBound_{u,v} = 0$ except for
$\lambdaBound_{\barVect{v}_t,\barVect{y}_{t+1}}
= \lambdaBound_{\barVect{y}_{t+1},\barVect{v}_t} = 2 a \sigma$,
$\lambdaBound_{\barVect{y}_{t+1},\barVect{y}_{t}}$
and $\lambdaBound_{\barVect{y}_t,\barVect{y}_{t+1}} = 1 + \lambdaBound_{\barVect{y}_{t+1},\barVect{y}_{t}}$.
Following our notations from the proof of Lemma~\ref{lemma_upper_bound_OFW}
(see Appendix~\ref{a:proof_lemma_upper_bound}),
we write $\lambda^g = \lambdaLip$ and $\lambdaP =  \lambdaBound_{\barVect{y}_{t+1},\barVect{y}_{t}}$.
Note that the Cholesky decomposition of $S = S(\eta,\sigma,a,b;\lambda)$
encodes for the sum of squares $\tr(S \, H) \geq 0$ that must be added during the proof,
where some of those squares can be interpreted as inequalities.
The Cholesky decomposition of $S$ has 2 eigenvectors:
the first eigenvector 
$\sqrt{\lambda^g} \, \barVect{g}_t + \frac{1}{2\sqrt{\lambda^g}} \, (\barVect{x}_t - \barVect{y}_{t+1} + \lambdaP(\barVect{y}_t - \barVect{y}_{t+1}))$
encodes for the inequality
\[ \langle g_t, x_t - y_{t+1} + \lambdaP(y_t - y_{t+1}) \rangle
\leq \lambda^g \Vert g_t \Vert^2 + \frac{1}{4\lambda^g} \Vert x_t - y_{t+1} + \lambdaP(y_t - y_{t+1}) \Vert^2,  \]
while the second eigenvector indicates that the upper bound we get from the proof
will contain minus a sum of squares, which is composed of the vectors $x_t$, $y_t$ and $y_{t+1}$.
The values of all those Lagrange multipliers in $\lambda$ and the value of $S$
gives us what inequalities and in which quantity to use in the proof of Lemma~\ref{lemma_upper_bound_OFW}.
Thus, from all of this we can deduce the proof up to showing that $Q(\lambdaP)$ is a sum of squares, where:
\begin{align*}
 Q(\lambdaP) & \triangleq
\frac{1+2\lambdaP}{2\eta} \Vert y_t - y_{t+1} \Vert^2 
- \frac{1}{4 \lambda^g} \Vert x_t - y_{t+1} + \lambdaP(y_t - y_{t+1}) \Vert^2      
        \\ & \qquad - a ( (1-2 \sigma) \Vert x_{t} - y_{t+1}\Vert^2 
                                - \Vert x_{t} - y_{t}\Vert^2 ) .
\end{align*}

Hence, all that is left to do is finding optimal $\eta$, $\sigma$, $a$ $\lambda^g$
such that there exists $\tilde\lambda \geq 0$ for which $Q(\tilde\lambda)$ is a sum of squares
and that minimizes the upper bound given by the proof on the regret of OFW $R_T$,
which is:
\begin{equation*}
\frac{1}{2\eta}\Vert x_\star - x_1 \Vert + \sum_{t=1}^T B_t(\eta,\sigma,a,0)
\leq \frac{1}{2\eta} D^2 + \lambda^g L^2 T + a \sigma^2 D^2 T .
\end{equation*}
Then, we rewrite $Q(\lambdaP)$ as:
\begin{align*}
 Q(\lambdaP) & =
\left( \frac{1+2\lambdaP}{2\eta} + a \right) \Vert y_t - y_{t+1} \Vert^2
+ 2 a \sigma \Vert x_{t} - y_{t+1}\Vert^2 
        + 2 a \langle x_{t} - y_{t+1}, y_{t+1} - y_t \rangle
\\ & \qquad
- \frac{1}{4 \lambda^g} \Vert x_t - y_{t+1} + \lambdaP(y_t - y_{t+1}) \Vert^2      
\\ & = \left( \frac{1+2\lambdaP}{2\eta} + a - \frac{\lambdaP^2}{4\lambda^g} \right) \Vert y_t - y_{t+1} \Vert^2
+ \left( 2 a \sigma - \frac{1}{4\lambda^g} \right) \Vert x_{t} - y_{t+1}\Vert^2 
\\ & \qquad
        + \left( 2 a + \frac{\lambdaP}{2\lambda^g} \right) \langle x_{t} - y_{t+1}, y_{t+1} - y_t \rangle.
\end{align*}
Thus, we get that $ Q(\lambdaP)$ is a sum of squares if and only if:
\begin{equation*}
\left\{
\begin{aligned}
& \left( 2 a \sigma - \frac{1}{4\lambda^g} \right) \geq 0 \\
& \left( a + \frac{\lambdaP}{4\lambda^g} \right)^2
\leq \left( 2 a \sigma - \frac{1}{4\lambda^g} \right) 
	\left( \frac{1+2\lambdaP}{2\eta} + a - \frac{\lambdaP^2}{4\lambda^g} \right) ,
\end{aligned}\right.
\end{equation*}
where the second line is equivalent to:
\begin{equation}\label{eq:SoS_pol_cond_1}
\Bigl( \frac{a\sigma}{2 \lambda^g} \Bigr) \lambdaP^2 
- \Bigl( \frac{1}{\eta} \bigl(2a\sigma - \frac{1}{4\lambda^g} \bigr) - \frac{a}{2 \lambda^g} \Bigr) \lambdaP 
+ \Bigl( (1-2\sigma)a^2 + \frac{a}{4\lambda^g} 
                -\frac{1}{2 \eta} \bigl(2a\sigma - \frac{1}{4\lambda^g} \bigr) \Bigr) \leq 0 .
\end{equation}
An elementary second order polynomial study indicates
that this polynomial equation has no non-negative solution $\lambdaP \geq 0$
when the linear coefficient is positive, that is, we get the necessary condition:
\begin{equation}\label{eq:SoS_cond_2}
\Bigl( \frac{1}{\eta} \bigl(2a\sigma - \frac{1}{4\lambda^g} \bigr) - \frac{a}{2 \lambda^g} \Bigr) \geq 0 .
\end{equation}
When~\eqref{eq:SoS_cond_2} holds,
we can upper bound the constant coefficient of~\eqref{eq:SoS_pol_cond_1}
by its leading order term $a^2$,
giving us the (slightly) more conservative but easier no analyze condition:
\begin{equation}\label{eq:SoS_pol_cond_2}
\Bigl( \frac{a\sigma}{2 \lambda^g} \Bigr) \lambdaP^2 
- \Bigl( \frac{1}{\eta} \bigl(2a\sigma - \frac{1}{4\lambda^g} \bigr) - \frac{a}{2 \lambda^g} \Bigr) \lambdaP 
+ a^2 \leq 0 .
\end{equation}
Moreover, when~\eqref{eq:SoS_cond_2} holds, we have that~\eqref{eq:SoS_pol_cond_2} has a non-negative solution $\lambdaP\geq0$
if and only if:
\begin{equation*}
\Bigl( \frac{1}{\eta} \bigl(2a\sigma - \frac{1}{4\lambda^g} \bigr) - \frac{a}{2 \lambda^g} \Bigr)^2
- 4 \Bigl( \frac{a\sigma}{2 \lambda^g} \Bigr) a^2 
\geq 0 .
\end{equation*}
Hence, combining all of the above, we get that $Q(\lambdaP)$ is a sum of squares
whenever:
\begin{equation}\label{eq:SoS_final_cond}
\left\{
\begin{aligned}
& \left( 2 a \sigma - \frac{1}{4\lambda^g} \right) \geq 0 \\
& \Bigl( \frac{1}{\eta} \bigl(2a\sigma - \frac{1}{4\lambda^g} \bigr) - \frac{a}{2 \lambda^g} \Bigr) \geq 0 \\
& \Bigl( \frac{1}{\eta} \bigl(2a\sigma - \frac{1}{4\lambda^g} \bigr) - \frac{a}{2 \lambda^g} \Bigr)^2
- 4 \Bigl( \frac{a\sigma}{2 \lambda^g} \Bigr) a^2 
\geq 0.
\end{aligned}
\right.
\end{equation}
Hence, to find the values of $\eta$, $\sigma$, $a$ $\lambda^g$
giving the optimal upper bound, we must solve the problem:
\begin{equation*}
\begin{aligned}
\inf_{\eta,\sigma,a,\lambda^g\geq 0} & \quad
\frac{1}{2\eta} D^2 + \lambda^g L^2 T + a \sigma^2 D^2 T
\\
\text{subject to: } 
& (\eta,\sigma,a,\lambda^g) \text{ satisfy \eqref{eq:SoS_final_cond}}, 
\text{ and } \sigma \leq 1 .
\end{aligned}
\end{equation*}
Solving this non-convex problem algebraically via Lagrange relaxation
(note that only the last constraint in~\eqref{eq:SoS_final_cond}
is saturated at the optimum)
we get that the optimal values are:
\[
    \eta = \frac{D3^{3/4}}{2LT^{3/4}}, \quad \sigma = \frac{\sqrt{3}}{\sqrt{T}}, \quad a = \frac{1}{6\eta},\quad \text{and},\quad  \lambda^g = 2 a \sigma^2 \frac{D^2}{L^2} = \frac{D^2}{\eta T L^2}\, .
\]
Plugging those values into $Q(\lambdaP)$ then allows us to conclude the proof
of Lemma~\ref{lemma_upper_bound_OFW}.
This concludes the design of the optimal proof of Lemma~\ref{lemma_upper_bound_OFW}.

\subsection{Examples of computation times for computing PEPs~\eqref{eq:PEP} and~\eqref{eq:jointstepsizeopt}}\label{a:numerical_computation_times}

\begin{table}[h]
\centering
\begin{tabular}{|c|c|c|c|c|c|c|c|c|c|c|}
\hline
\textbf{T}  & 10 & 15 & 20 & 25 & 30 & 35 & 40 & 45 & 50 & 55  \\ \hline
\textbf{time (s)}  &  0.27 & 0.74 & 1.62 & 3.35 & 6.69 & 14.17 & 22.97 & 40.76 & 62.63 & 94.35  \\ \hline
\hline
\textbf{n}  & 60 & 65 & 70 & 75 & 80 & 85 & 90 & 95 &  &  \\ \hline
\textbf{time (s)}   & 140.88 & 221.85 & 313.78 & 533.60 & 773.32 & 1202.68 & 1603.33 & 2126.25 &  & \\ \hline
\textbf{time (min)}   & 2.35 & 3.70 & 5.23 & 8.89 & 12.89 & 20.04 & 26.72 & 35.44 &  & \\ \hline
\end{tabular}
\caption{Computation time for numerically solving~\eqref{eq:PEP} for different values of $T$ with parameters fixed as in~\eqref{eq:parameter_choice}
(with the appropriate transformation of parameters to go from Algorithm~\ref{OFW_alg_new} to the definition of~\eqref{eq:OFW_general} used in~\eqref{eq:PEP}).}
\label{tab:computation_time_PEP_OFW_fixed_parameters}
\end{table}

\begin{table}[h]
\centering
\begin{tabular}{|c|c|c|c|c|c|c|c|c|c|c|}
\hline
\textbf{T}  & 10 & 15 & 20 & 25 & 30 & 35 & 40 & 45 & 50 & 55  \\ \hline
\textbf{time (s)}  & 0.15 & 0.28 & 0.51 & 0.88 & 1.65 & 2.97 & 4.23 & 6.28 & 11.30 & 15.82  \\ \hline
\hline
\textbf{T}  & 60 & 65 & 70 & 75 & 80 & 85 & 90 & 95 & 100 &  \\ \hline
\textbf{time (s)}  & 20.94 & 31.53 & 42.12 & 55.64 & 63.04 & 98.36 & 109.48 & 
146.80 & 
194.84 & \\ \hline
\end{tabular}
\caption{Computation time for numerically solving~\eqref{eq:jointstepsizeopt} for different values of $T$.}
\label{tab:computation_time_design_PEP_OFW}
\end{table}

In this section, we provide some examples of computation times for numerically solving the PEPs~\eqref{eq:PEP} and~\eqref{eq:jointstepsizeopt} for different values of horizon time $T$.
Table~\ref{tab:computation_time_PEP_OFW_fixed_parameters} presents the computation times for solving the PEP~\eqref{eq:PEP} with parameters fixed as in~\eqref{eq:parameter_choice}
(with the appropriate transformation of parameters to go from Algorithm~\ref{OFW_alg_new} to the definition of~\eqref{eq:OFW_general} used in~\eqref{eq:PEP}).
Table~\ref{tab:computation_time_design_PEP_OFW} presents the computation times for solving the design PEP~\eqref{eq:jointstepsizeopt} which 
optimizes the parameters of the algorithm, thus giving us the optimal worst-case regret guarantee for any OFW-type algorithm of the form~\eqref{eq:OFW_general}.
Note that the computation times for solving~\eqref{eq:jointstepsizeopt} are significantly smaller than those for solving~\eqref{eq:PEP} with fixed parameters,
which appears to be due to the fact that, as the design PEP~\eqref{eq:jointstepsizeopt} jointly optimizes the algorithm parameters and its regret bound proof,
it can find more structured proofs that allow for more efficient numerical solving of the underlying semidefinite program.

\subsection{Examples of numerically optimized stepsize patterns}\label{a:numerical_stepsizeopt}

The following list provides numerical examples of optimal parameter values for the online Frank--Wolfe-type algorithm~\eqref{eq:OFW_general}
for $T= 2,\ldots,6$, together with their optimal worst-case regret guarantees.
(Note that for $T=1$ the algorithm is necessarily trivial as we can only play the uninformed choice~$x_1$.)
Those values were obtained by numerically solving the relaxed linear convex semidefinite program~\eqref{eq:PEP_relaxed_joint_opt}
with $L=D=1$.
Note that we chose the parameters given by~\eqref{eq:param_solve_change_vars} and thus we do not report the values
of $\eta_{t,s} = 1$ for $1\leq s \leq t \leq T-1$.
We present the values of the parameters $\{ \gamma_{t,s} \}_{1\leq s < t \leq T}$
and $\{ \beta_{t,s} \}_{1\leq s < t \leq T-1}$
as square $T\times T$ and $(T-1)\times(T-1)$ matrices, respectively,
where the values for out of range indices are left blank intentionally.

\begin{itemize}
\item
For $T=2$, we have $R_T \leq B_2 \leq 1.7321$ and:
\begin{equation*}
[\gamma_{t,s}] = 
\begin{bmatrix}
~ & ~ \\
0.5 & ~~
\end{bmatrix},
\quad
[\beta_{t,s}] = 
\begin{bmatrix}
~~  
\end{bmatrix} .
\end{equation*}

\item
For $T=3$, we have $R_T \leq B_3 \leq 2.3421$ and:
\begin{equation*}
[\gamma_{t,s}] = 
\begin{bmatrix}
~ & ~ & ~ \\
0.5 & ~ & ~ \\
0.3118 & 0.3764 & ~~~~~ \\
\end{bmatrix},
\quad
[\beta_{t,s}] = 
\begin{bmatrix}
~ & ~  \\
-0.1099 & ~~~~~ 
\end{bmatrix} .
\end{equation*}

\item
For $T=4$, we have $R_T \leq B_4 \leq 2.9029$ and:
\begin{equation*}
[\gamma_{t,s}] = 
\begin{bmatrix}
~ & ~ & ~ & ~ \\
0.5 & ~ & ~ & ~ \\
0.3133 & 0.3734 & ~ & ~ \\
0.1843 & 0.2197 & 0.4116 & ~~~~~ \\
\end{bmatrix},
\quad
[\beta_{t,s}] = 
\begin{bmatrix}
~ & ~  & ~  \\
0.1961 & ~  & ~  \\
-0.4249 & 0.1465  & ~~~~~ 
\end{bmatrix} .
\end{equation*}

\item
For $T=5$, we have $R_T \leq B_5 \leq 3.4217$ and:
\begin{align*}
& [\gamma_{t,s}] = 
\begin{bmatrix}
~ & ~ & ~ & ~ & ~ \\
0.5 & ~ & ~ & ~ & ~ \\
0.3067 & 0.3866 & ~ & ~ & ~ \\
0.2124 & 0.2677 & 0.3075 & ~ & ~ \\
0.1201 & 0.1514 & 0.1739 & 0.4345 & ~~~~~ \\
\end{bmatrix}, \\
& [\beta_{t,s}] = 
\begin{bmatrix}
~ & ~  & ~ & ~  \\
0.5649 & ~  & ~ & ~  \\
-0.2595 & 0.212  & ~ & ~  \\
-0.6282 & -0.253  & 0.3473 & ~~~~~ 
\end{bmatrix} .
\end{align*}

\item
For $T=6$, we have $R_T \leq B_6 \leq 3.917$ and:
\begin{align*}
& [\gamma_{t,s}] = 
\begin{bmatrix}
~ & ~ & ~ & ~ & ~ & ~ \\
0.5 & ~ & ~ & ~ & ~ & ~ \\
0.3068 & 0.3863 & ~ & ~ & ~ & ~ \\
0.2101 & 0.2646 & 0.3151 & ~ & ~ & ~ \\
0.1406 & 0.177 & 0.2108 & 0.3309 & ~ & ~ \\
0.0784 & 0.0985 & 0.1174 & 0.1842 & 0.4432 & ~~~~~ \\
\end{bmatrix}, \\
& [\beta_{t,s}] = 
\begin{bmatrix}
~ & ~  & ~ & ~ & ~  \\
0.5856 & ~  & ~ & ~ & ~  \\
-0.0481 & 0.4808  & ~ & ~ & ~  \\
-0.5053 & -0.0949  & 0.3922 & ~ & ~  \\
-0.7675 & -0.4251  & -0.001 & 0.3876 & ~~~~~ 
\end{bmatrix} .
\end{align*}

\end{itemize}

Note that the structure of the values of $\{ \gamma_{t,s} \}_{1\leq s < t \leq T}$
implies that we get the update rule $x_{t+1} = \sigma_t v_t + (1-\sigma_t) x_t$
with $\sigma_t = \gamma_{t+1,t}$ in~\eqref{eq:OFW_general}.

\subsection{Joint stepsize optimization with multiple linear optimization rounds per iteration}\label{a:numerical_multiple_stepsizeopt}

As explained in Section~\ref{s:numerics}, the joint stepsize optimization
method for online Frank--Wolfe-type algorithms explained in Section~\ref{s:opt_param}
and in Appendix~\ref{a:stepsizeopt}
can be easily adapted to the variant algorithm 
with multiple linear optimization rounds per iteration defined in~\eqref{eq:OFW_multiple}.
In this section, we explain the details on how to make this adaptation.

We first define the worst-case regret for~\eqref{eq:OFW_multiple}
with given parameters by simply replacing~\eqref{eq:OFW_general}
by~\eqref{eq:OFW_multiple} in~\eqref{eq:PEP}:
\begin{equation*}\label{eq:PEP_multiple}
\begin{aligned}
\tilde B_T(\{(\eta_{t,k,s},\beta_{t,k,s,j},\gamma_{t,s,j})\}_{t,k,s,j}) \triangleq
\sup_{\substack{\cK, \{\ell_t\}_{t=1,\ldots,T}\\ x_\star, \{x_t\}_{t=1,\ldots,T}\\d\in\mathbb{N}}} \,
&R_T(x_1,\ldots,x_T; x_\star)\\
\text{subject to: } 
&  \ell_t \text{ is convex and $L$-Lipschitz for }t=1,\ldots,T,\\
& \cK \text{ is a non-empty closed convex set of $\mathbb{R}^d$,}\\ 
&\mathrm{Diam}(\cK)\leq D,\\
& \{x_t\}_{t=1,\ldots,T} \text{ is generated by~\eqref{eq:OFW_multiple}}.\\
\end{aligned}
\end{equation*}
To adapt the joint minimization problem~\eqref{eq:jointstepsizeopt} to~\eqref{eq:OFW_multiple},
we need to replace $W_T\bigl(\{\eta_{t,s},\beta_{t,s},\gamma_{t,s}\}_{t,s}\bigr)$
from Section~\ref{s:numerics} by some relaxation of the problem above.
More precisely, this relaxation is obtained through: 
(i)~we observe that all $x_t$ for $t=2, \cdots, T$ are in the convex hull of $x_1, v_{1,1}, \cdots, v_{1,r}, \cdots, v_{T-1,r}$, and thus the domain constraints for $\cK$ are imposed only on vectors $x_1, v_{1,1}, \cdots, v_{T-1,r}, x_\star$; 
(ii)~we only keep the boundary constraints  corresponding to the optimality of $v_{t,k}$ compared with $v_{t,k+1},\ldots, v_{t,r}, v_{t+1,1},\ldots,v_{T-1,r},x_\star$:
\begin{multline}
\label{eq:PEP_relaxed_multiple}
\tilde W_T(\{(\eta_{t,k,s},\beta_{t,k,s,j},\gamma_{t,s,j})\}_{t,k,s,j}) 
\triangleq \\
\begin{aligned}
\sup_{\substack{
	\{g_t\}_{t=1,\ldots,T},\, x_\star\\ 
	\{(x_t, v_{t,k}, \mathrm{dir}_{t,k})\}_{t=1,\ldots,T, k=1,\ldots,r}\\
	d\in\mathbb{N}}} \,
&\sum_{t=1}^T \langle g_t, x_t - x_\star \rangle \\
\text{subject to: } 
& \{(x_t, \mathrm{dir}_{t,1}, \dots, \mathrm{dir}_{t,r})\}_{t=1,\ldots,T} \text{ compatible with~\eqref{eq:OFW_multiple}},\\
& \langle -\mathrm{dir}_{t,k}, u - v_{t,k} \rangle \leq 0 \text{ for all }t=1,\ldots,T-1 
	\text{ and } k=1,\ldots,r \\
& \qquad\qquad\quad 	
	\text{ and } u\in\{v_{t,k+1},,\cdots, v_{t,r}, v_{t+1,1}, \cdots, v_{T-1,r}, x_\star\}, \\ 
&\mathrm{Diam}(\{ x_1, v_{1,1}, \cdots, v_{1,r}, \cdots, v_{T-1,r}, x_\star \})\leq D,\\
&  \Vert g_t \Vert \leq L \text{ for }t=1,\ldots,T,\\
\end{aligned}
\end{multline}
where the line ``\emph{$\{(x_t, \mathrm{dir}_{t,1}, \dots, \mathrm{dir}_{t,r})\}_{t=1,\ldots,T} \text{ compatible with~\eqref{eq:OFW_multiple}}$}'' 
means that $x_t$ and $\mathrm{dir}_{t,k}$ can be substituted by their expressions in~\eqref{eq:OFW_multiple},
leading to $\tilde W_T\bigl(\{(\eta_{t,k,s},\beta_{t,k,s,j},\gamma_{t,s,j})\}_{t,k,s,j}\bigr)
\geq \tilde B_T\bigl(\{(\eta_{t,k,s},\beta_{t,k,s,j},\gamma_{t,s,j})\}_{t,k,s,j}\bigr)$.

We can now formulate the adaptation of the joint minimization 
problem~\eqref{eq:jointstepsizeopt} to~\eqref{eq:OFW_multiple},
giving us:
\begin{equation}\label{eq:jointstepsizeopt_multiple}     \min_{\{(\eta_{t,k,s},\beta_{t,k,s,j},\gamma_{t,s,j})\}_{t,k,s,j}} 
     \left\{ \! \tilde  W_T(\{(\eta_{t,k,s},\beta_{t,k,s,j},\gamma_{t,s,j})\}_{t,k,s,j}) 
     \text{ s.t.} \sum_{s=1}^{t-1}\sum_{j=1}^{r}\gamma_{t,s,j}\leq 1,\, \gamma_{t,s,j}\geq 0 \! \right\} \! .
\end{equation}

\providecommand{\lex}{\mathrm{lex}}

In the remaining of this section, we will need the following notations.
For integers $t,s\in\mathbb{N}$ and $k,j \in \llbracket 1, r \rrbracket$,
we write $(t,k) \leq_{\lex} (s,j)$ if $t<s$, or $t=s$ and $k<j$.
We write $(t,k) <_{\lex} (s,j)$ if $(t,k) \leq_{\lex} (s,j)$ and $(t,k) \neq (s,j)$.

Then, the method detailed in Appendix~\ref{a:pep} and~\ref{a:stepsizeopt}
can be immediately adapted to reformulate~\eqref{eq:PEP_relaxed_multiple}
as a linear convex semidefinite program and then form its Lagrange dual,
giving us:
\begin{equation}\label{eq:PEP_relaxed_SDP_dual_multiple}
\begin{aligned}
&
\begin{aligned}
\tilde W_T(\{(\eta_{t,k,s},\beta_{t,k,s,j},\gamma_{t,s,j})\}_{t,k,s,j}) = \inf_{\substack{ \lambdaLip\geq 0 \\ \lambdaDiam\geq 0 \\ \lambdaBound \geq 0 }} \,
&  \sum_{t=1}^T \lambdaLip_t\, L^2 
+ \frac{1}{2} \sum_{u,\, v \in 
	\{ \barVect{x}_1, \barVect{v}_{1,1}, \cdots, \barVect{v}_{T-1,r}, \barVect{x}_\star \}}
		\!\!\!\! \lambdaDiam_{\{u,v\}}\, D^2
\\
\text{subject to: } 
& S(\eta,\beta,\gamma; \lambda) \succeq 0, \\
\end{aligned}\\
&
\begin{aligned}
 \text{ where }  S(\eta,\beta,\gamma; \lambda) = 
	& \ \sum_{t=1}^T \lambdaLip_t\, (\barVect{g}_t \odot \barVect{g}_t) 
	+ \frac{1}{2} \sum_{u,\, v \in 
	\{ \barVect{x}_1, \barVect{v}_{1,1}, \cdots, \barVect{v}_{T-1,r}, \barVect{x}_\star \}}
		\!\!\!\! \lambdaDiam_{\{u,v\}}\,  ((u-v) \odot (u-v))   \\
&  
+ \sum_{t=1}^{T-1} \sum_{k=1}^{r} 
	\sum_{u\in\{\barVect{v}_{t,k+1},\cdots, \barVect{v}_{T-1,r}, \barVect{x}_\star \}} 
	\!\!\!\!\!\!
	\lambdaBound_{{\barVect{v}_{t,k}}, u}  \,
		(\barVect{\mathrm{dir}}_{t,k} \odot (\barVect{v}_{t,k} - u) ) \\
&  - \sum_{t=1}^T \sum_{s=1}^{t-1} \sum_{k=1}^r \gamma_{t,s,k} \, (\barVect{g}_t \odot \barVect{v}_{s,k})
			+  \sum_{t=1}^T (\barVect{g}_t \odot \barVect{x}_\star) , \\
\end{aligned}\\
&
\text{ and }
\lambda = (\lambdaLip, \lambdaDiam, \lambdaBound) \text{ and }
\barVect{\mathrm{dir}}_{t,k} = \sum_{s=1}^t \eta_{t,k,s}\, \barVect{g}_s 
			+ \!\!\!\!\!	 \sum_{(s,j) <_{\lex} (t,k)} \!\!\!\!\!\!\! \beta_{t,k,s,j}\, \barVect{v}_{s,j}
	\text{ for } t=1,\dots,T-1 .
\end{aligned}
\end{equation}

Substituting~\eqref{eq:PEP_relaxed_SDP_dual_multiple}
in~\eqref{eq:jointstepsizeopt_multiple},
we get a linear optimization problem
with a bilinear matrix inequality constraint,
which we reformulate as a linear convex semidefinite program
using a change of variables similar to that of~\eqref{eq:SDP_opt_param_change_vars}:
$(\eta, \beta, \lambdaBound)$ are replaced by the variables
(with the convention $\barVect{v}_{T,1} = \barVect{x}_\star$):
\begin{equation}\label{eq:SDP_opt_param_change_vars_multiple}
\begin{aligned}
& B_{t,k,s} 
= \eta_{t,k,s} \sum_{(t,k) <_{\lex} (m,j) \leq_{\lex} (T,1)} 
	\lambdaBound_{\barVect{v}_{t,k}, \barVect{v}_{m,j}}
- \sum_{(s,1) \leq_{\lex} (m,i) <_{\lex} (t,k)}
	\eta_{m,i,s}\, \lambdaBound_{\barVect{v}_{m,i}, \barVect{v}_{t,k}}
\\
& \qquad \forall (1,1) \leq_{\lex} (s,j) \leq_{\lex} (t,k) \leq_{\lex} (T,1),
\\
& C_{t,k,s,j} 
= \beta_{t,k,s,j} \sum_{(t,k) <_{\lex} (m,i) \leq_{\lex} (T,1)} 
	\lambdaBound_{\barVect{v}_{t,k}, \barVect{v}_{m,i}}
- \sum_{(s,j) \leq_{\lex} (m,i) <_{\lex} (t,k)}
	\beta_{m,i,s,j}\,  \lambdaBound_{\barVect{v}_{m,i}, \barVect{v}_{t,k}}
\\
& \qquad \forall (1,1) \leq_{\lex} (s,j) <_{\lex} (t,k) \leq_{\lex} (T,1).
\end{aligned}
\end{equation}

Finally, this change of variables gives us the following linear convex semidefinite program which is the adaptation of~\eqref{eq:PEP_relaxed_joint_opt} for~\eqref{eq:OFW_multiple}:
\begin{equation*}\label{eq:PEP_relaxed_joint_opt_multiple}
\begin{aligned}
&
\begin{aligned}
\inf_{\{(B_{t,s,k},C_{t,s,k},\gamma_{t,s,k})\}_{t,s,k}}
\inf_{\substack{ \lambdaLip\geq 0 \\ \lambdaDiam\geq 0 \\ \lambdaBound \geq 0 }} \,
&  \sum_{t=1}^T \lambdaLip_t\, L^2 
+ \frac{1}{2} \sum_{u,\, v \in 
	\{ \barVect{x}_1, \barVect{v}_{1,1}, \cdots, \barVect{v}_{T-1,r}, \barVect{x}_\star \}}
		\lambdaDiam_{\{u,v\}}\, D^2
\\
\text{subject to: } 
& S(B,C,\gamma; \lambda) \succeq 0, \\
& \sum_{ (s,1) \leq_{\lex} (t,k) \leq_{\lex} (T,1) } B_{t,k,s} = 0 
	\text{ for } s=1,\ldots,T,
\\ & 	\sum_{ (s,j) <_{\lex} (t,k) \leq_{\lex} (T,1) } C_{t,k,s,j} = 0 
\text{ for } 
(1,1) \leq_{\lex} (s,j) <_{\lex} (T,1),
\\
\end{aligned}\\
&
\begin{aligned}
 \text{ where }  S(B,C,\gamma; \lambda) = 
	& \ \frac{1}{2} \sum_{u,\, v \in 
	\{ \barVect{x}_1, \barVect{v}_{1,1}, \cdots, \barVect{v}_{T-1,r}, \barVect{x}_\star \}}
		 \lambdaDiam_{\{u,v\}}\,  ((u-v) \odot (u-v))  + \sum_{t=1}^T \lambdaLip_t\, (\barVect{g}_t \odot \barVect{g}_t)  \\
& + \sum_{\substack{
			(1,1) \leq_{\lex} (t,k) \leq_{\lex} (T,1) \\
			1 \leq s \leq t
		}}
	B_{t,s} \, (\barVect{g}_s \odot \barVect{v}_{t,k})  + \sum_{\substack{
			(1,1) \leq_{\lex} (t,k) \leq_{\lex} (T,1) \\
			(1,1) \leq_{\lex} (s,j) <_{\lex} (t,k)
		}}
		C_{t,s} \, (\barVect{v}_{s,j} \odot \barVect{v}_{t,k}) \\
&  - \sum_{t=1}^T \sum_{s=1}^{t-1} \sum_{k=1}^r \gamma_{t,s,k} \, (\barVect{g}_t \odot \barVect{v}_{s,k})
			+  \sum_{t=1}^T (\barVect{g}_t \odot \barVect{x}_\star), \\
\end{aligned}\\
&
\text{ and }
\lambda = (\lambdaLip, \lambdaDiam, \lambdaBound) .
\end{aligned}
\end{equation*}
Note that, as in Appendix~\ref{a:stepsizeopt}, 
this change of variables is not one-to-one:
for given values of $B$ and $C$ there exist several possible values of
$(\eta,\beta,\lambdaBound)$ satisfying~\eqref{eq:SDP_opt_param_change_vars_multiple}.
Nevertheless, as in Appendix~\ref{a:stepsizeopt},
imposing $\eta_{t,k,s}=1$ for all $t,k,s$,
we get a unique solution by solving the part of~\eqref{eq:SDP_opt_param_change_vars_multiple}
for $\lambdaBound$, and then solving the second part of~\eqref{eq:SDP_opt_param_change_vars_multiple}
for~$\beta$.

\subsection{Extra PEP numerical plots: log-log plots for regret rate}
\label{a:log_log_plots}

In this section, we present the log-log plots version of the numerical experiments of Section~\ref{s:numerics}. 
Recall that those numerical experiments on worst-case regret of different variations of online Frank--Wolfe-type algorithms were obtained by leveraging the semidefinite programming techniques of Section~\ref{sec:PEP_for_OFW},
which allow to compute exact worst-case regret values.
For convenience, we first present again Figure~\ref{fig:Comparison_tau} with linear scales.
In Figure~\ref{fig:Comparison_tau_log_log}, we present the same numerical experiments as in Figure~\ref{fig:Comparison_tau} (same computed values) but as log-log plots.
Note that the slopes of curves in those log-log plots correspond to the exponents of the rates from the regret upper bounds in Figure~\ref{fig:Comparison_tau}.

In Figure~\ref{fig:Comparison_tau_slopes}, we present an interpolation of those regret rate exponent for the different computed exact worst-case regret bounds from the numerical experiments of Figure~\ref{fig:Comparison_tau}.
(The theoretical upper bounds are in closed forms and their exponent of $3/4$ can easily be read.)
To interpolate those regret rate exponent we interpolate the slopes from the log-log plots.
That is, for a sequence of tight regret upper bounds $B_T$ for different horizon values $T\in\{T_1, \cdots, T_n\}$,
we select some value $T_{\text{ref}} = T_{\lfloor n/2 \rfloor}$, and then for all values of
$T\in\{T_1, \cdots, T_n\} \setminus \{ T_{\text{ref}}  \}$ we compute:
\begin{equation}\label{eq:def_regret_rate_interpolation}
( \log( B_T ) - \log(B_{T_{\text{ref}}} ) ) / (\log( T ) - \log( T_{\text{ref}} ) ) . 
\end{equation}
We also include in the top right plot of Figure~\ref{fig:Comparison_tau_slopes} (the plot dedicated to the anytime variants of OFW)
the bound~\eqref{eq:bound_anytime_OFW_with_sums} from the proof of Theorem~\ref{thm_upper_bound_OFW_anytime} 
to compare exponent convergence speed of the bound for anytime tunings of OFW 
to the exponent convergence speed for the sum of terms in $s^{-1/4}$ in the bound~\eqref{eq:bound_anytime_OFW_with_sums}.

\setlength{\figWidth}{6.6cm}
\setlength{\figHeight}{5.8cm}
\renewcommand{\legendDist}{0.25}

\begin{figure}[p]
\begin{subfigure}{\textwidth}
    \centering

        \hfill
    \begin{minipage}[t]{0.48\textwidth}
        \vspace*{0pt}
        \centering
\begin{tikzpicture}

\definecolor{crimson2143940}{RGB}{214,39,40}
\definecolor{forestgreen4416044}{RGB}{44,160,44}
\definecolor{mediumpurple148103189}{RGB}{148,103,189}
\definecolor{darkgray176}{RGB}{176,176,176}
\definecolor{darkorange25512714}{RGB}{255,127,14}
\definecolor{lightgray204}{RGB}{204,204,204}
\definecolor{steelblue31119180}{RGB}{31,119,180}

\begin{axis}[
legend cell align={left},
legend style={
  fill opacity=1,
  draw opacity=1,
  text opacity=1,
  at={(0.5,-\legendDist)},
  anchor=north,
  draw=none
},
tick align=outside,
tick pos=left,
x grid style={darkgray176},
xlabel={Time horizon $T$},
xmin=-5, xmax=105,
xtick style={color=black},
y grid style={darkgray176},
ylabel={Worst-case regret},
ymin=-2, ymax=55,
ytick style={color=black},
  width=\figWidth,
  height=\figHeight,
  scale only axis,
 font=\small
]
\addplot [thick, crimson2143940, mark=asterisk, mark size=1, mark options={solid}]
table {%
1 8
2 13.4543426440594
3 18.2360564556382
4 22.6274169979695
5 26.7496121990569
6 30.6692690038211
7 34.4281365652708
8 38.0546276800871
10 44.9873060152279
15 60.9759297785538
20 75.6593287202541
25 89.4427190999916
30 102.548881535096
35 115.117412732146
40 127.243316602728
45 138.995062234592
50 150.424123723456
55 161.570475875318
60 172.46597374228
65 183.136539862808
70 193.603639399487
75 203.885309381655
80 213.996897592455
90 233.760899142071
100 252.98221281347
};
\addlegendentry{{\legendEntryFontSize \cite[Algo.~27]{hazanIntroductionOnlineConvex2016}: bound~\cite[Theorem 7.3]{hazanIntroductionOnlineConvex2016}}}
\addplot [thick, crimson2143940, dashed, mark=asterisk, mark size=1, mark options={solid}]
table {%
1 0.999999978829868
2 1.86602540835967
3 2.86602530022636
4 3.83521194338494
5 4.84566679047157
6 5.77847121423904
7 6.64296540269502
8 7.47558796478797
10 9.06556940344561
15 12.7147146818818
20 16.0829251259494
25 19.2687001005565
30 22.3084311637338
35 25.2313421308435
40 28.0532078096698
45 30.7885720243218
50 33.4461650859791
55 36.0337573951143
60 38.5568065885238
65 41.02483254
70 43.44430215
75 45.8196833
80 48.15440388
90 52.7148001953334
100 57.147182191147
};
\addlegendentry{{\legendEntryFontSize \cite[Algo.~27]{hazanIntroductionOnlineConvex2016}: tight bound $B_T$ from~(\ref{eq:PEP})}}
\addplot [semithick, forestgreen4416044, mark size=3, mark options={solid}]
table {%
1 1.75476535060332
2 2.95115178586752
3 4
4 4.9632259152112
5 5.86741157862262
6 6.72717132202972
7 7.55166264132207
8 8.34711776039087
10 9.8677707265638
15 13.3748060995284
20 16.5955460610261
25 19.6188730425514
30 22.493653007614
35 25.2505058891838
40 27.9102703837895
45 30.4879648892769
50 32.9948800255984
55 35.4397840933123
60 37.829664360127
65 40.17020682258
70 42.466119771115
75 44.7213595499958
80 46.939292628981
90 51.2744407675481
100 55.4905526705042
};
\addlegendentry{{\legendEntryFontSize Algo.~\ref{OFW_alg_new}: bound from Theorem~\ref{thm_upper_bound_OFW}}}
\addplot [semithick, forestgreen4416044, dashed, mark=x, mark size=2, mark options={solid}]
table {%
1 0.99999997622009
2 1.86602536857043
3 2.79508504207169
4 3.41695016277647
5 4.00578768309647
6 4.56827781991907
7 5.10944325015655
8 5.63752492727876
10 6.66119581166219
15 9.02755847964825
20 11.1903480646082
25 13.2176738631319
30 15.1477451800894
35 17.0005288149411
40 18.7895976211105
45 20.5247895195582
50 22.2138706190613
55 23.8627054385189
60 25.4759388445294
65 27.057093360461
70 28.6091703686733
75 30.1349387581324
80 31.6366960286442
85 33.1162504419378
90 34.5752635225472
};
\addlegendentry{{\legendEntryFontSize Algo.~\ref{OFW_alg_new}: tight bound $B_T$ from~(\ref{eq:PEP})}}
\addplot [semithick, mediumpurple148103189, dashed, mark=o, mark size=2, mark options={solid}]
table {%
1 0.999999999986411
2 1.73205077164806
3 2.34209616885784
4 2.90282823623805
5 3.42168738269308
6 3.91697957391197
7 4.38834045416619
8 4.84470501303823
9 5.28476977360727
10 5.71400805695854
11 6.13109156852881
12 6.53978673154998
13 6.93890009089812
14 7.33121252392302
15 7.71568171523579
20 9.55298861900087
25 11.2764639001485
30 12.91523643836
35 14.4863179235929
40 16.0021461745957
45 17.4709535774897
50 18.8995128351095
55 20.2925817455325
60 21.6543713495292
65 22.9879410641388
70 24.2961479561498
75 25.5810970007411
80 26.8448520074568
85 28.0889320029728
90 29.31489412696
95 30.5238801404065
100 31.7171067431816
};
\addlegendentry{{\legendEntryFontSize Optimized algo.~and bound from~(\ref{eq:jointstepsizeopt})}}
\end{axis}

\end{tikzpicture}
    \end{minipage}
    \hfill
    \begin{minipage}[t]{0.48\textwidth}
        \vspace*{0pt}
        \centering
\begin{tikzpicture}

\definecolor{crimson2143940}{RGB}{214,39,40}
\definecolor{darkgray176}{RGB}{176,176,176}
\definecolor{darkturquoise23190207}{RGB}{23,190,207}
\definecolor{forestgreen4416044}{RGB}{44,160,44}
\definecolor{goldenrod18818934}{RGB}{188,189,34}
\definecolor{lightgray204}{RGB}{204,204,204}
\definecolor{steelblue31119180}{RGB}{31,119,180}

\begin{axis}[
legend cell align={left},
legend style={
  fill opacity=1,
  draw opacity=1,
  text opacity=1,
  at={(0.5,-\legendDist)},
  anchor=north,
  draw=none
},
tick align=outside,
tick pos=left,
x grid style={darkgray176},
xlabel={Time horizon $T$},
xmin=-5, xmax=105,
xtick style={color=black},
y grid style={darkgray176},
ymin=-2, ymax=55,
ytick style={color=black},
  width=\figWidth,
  height=\figHeight,
  scale only axis,
 font=\small
]
\addplot [semithick, forestgreen4416044, mark=Mercedes star, mark size=3, mark options={solid}]
table {%
1 2.193456688254154
2 3.688939732334405
3 5.0
4 6.204032394013997
5 7.334264473278278
6 8.408964152537145
7 9.439578301652581
8 10.433897200488582
10 12.334713408204754
15 16.71850762441055
20 20.74443257628261
25 24.523591303189267
30 28.117066259517458
35 31.56313236147981
40 34.88783797973685
45 38.109956111596105
50 41.24360003199805
55 44.299730116640355
60 47.28708045015879
65 50.212758528225045
70 53.08264971389377
75 55.90169943749474
80 58.674115786226224
85 61.40351851407624
90 64.0930509594351
95 66.74546564983403
100 69.3631908381303
};
\addlegendentry{{\legendEntryFontSize Anytime Algo.~\ref{OFW_alg_new_anytime}: bound from Theorem~\ref{thm_upper_bound_OFW_anytime}}}
\addplot [semithick, crimson2143940, dashed, mark=asterisk, mark size=1, mark options={solid}]
table {%
1 0.999999978829868
2 1.86602540835967
3 2.86602530022636
4 3.83521194338494
5 4.84566679047157
6 5.77847121423904
7 6.64296540269502
8 7.47558796478797
10 9.06556940344561
15 12.7147146818818
20 16.0829251259494
25 19.2687001005565
30 22.3084311637338
35 25.2313421308435
40 28.0532078096698
45 30.7885720243218
50 33.4461650859791
55 36.0337573951143
60 38.5568065885238
65 41.02483254
70 43.44430215
75 45.8196833
80 48.15440388
90 52.7148001953334
100 57.147182191147
};
\addlegendentry{{\legendEntryFontSize \cite[Algo.~27]{hazanIntroductionOnlineConvex2016}: tight bound $B_T$ from~(\ref{eq:PEP})}}
\addplot [thick, crimson2143940, dashed, mark=+, mark size=2, mark options={solid}]
table {%
1 0.999999978829868
2 1.86602540820839
3 2.86602528722214
4 3.81941917172879
5 4.7843884768231
6 5.66753177316362
7 6.50107951868022
8 7.31139133654082
9 8.09325316957615
10 8.85139312035817
11 9.58774550759338
12 10.3054843155196
13 11.0069165395407
14 11.6938619340762
15 12.3679052961251
20 15.5818013204948
25 18.5962663625794
30 21.4601077826225
35 24.2075625818537
40 26.8596151684412
45 29.4313051952187
50 31.9318777341248
55 34.3709642044624
60 36.7548638838124
65 39.0892721371677
70 41.3798427025696
75 43.63062719759103
};
\addlegendentry{{\legendEntryFontSize Anytime \cite[Algo.~27]{hazanIntroductionOnlineConvex2016}: tight bound from~(\ref{eq:PEP})}}
\addplot [semithick, forestgreen4416044, dashed, mark=square, mark size=2, mark options={solid}]
table {%
1 0.99999997622009
2 1.86602539808008
3 2.70563299799477
4 3.58153879868095
5 4.36926755755856
6 5.10221940117672
7 5.79270161250749
8 6.4526213576502
10 7.70024853201964
15 10.5307220687907
20 13.0925557276164
25 15.4754138829171
30 17.7332251672391
35 19.895717259638
40 21.9816644090207
45 24.0038401515299
50 25.9713602247288
55 27.8912843690133
60 29.7692345570552
65 31.6097702772639
70 33.4164604156972
75 35.1922403201557
80 36.9396265985578
85 38.6611721882096
90 40.3588001390253
};
\addlegendentry{{\legendEntryFontSize Anytime Algo.~\ref{OFW_alg_new_anytime}: tight bound $B_T$ from~(\ref{eq:PEP})}}
\addplot [semithick, forestgreen4416044, dashed, mark=x, mark size=2, mark options={solid}]
table {%
1 0.99999997622009
2 1.86602536857043
3 2.79508504207169
4 3.41695016277647
5 4.00578768309647
6 4.56827781991907
7 5.10944325015655
8 5.63752492727876
10 6.66119581166219
15 9.02755847964825
20 11.1903480646082
25 13.2176738631319
30 15.1477451800894
35 17.0005288149411
40 18.7895976211105
45 20.5247895195582
50 22.2138706190613
55 23.8627054385189
60 25.4759388445294
65 27.057093360461
70 28.6091703686733
75 30.1349387581324
80 31.6366960286442
85 33.1162504419378
90 34.5752635225472
};
\addlegendentry{{\legendEntryFontSize Algo.~\ref{OFW_alg_new}: tight bound $B_T$ from~(\ref{eq:PEP})}}
\end{axis}

\end{tikzpicture}
    \end{minipage}
    \hfill
    
\medskip

    \hfill
        \begin{minipage}[t]{0.48\textwidth}
        \vspace*{0pt}
        \centering
\begin{tikzpicture}

\definecolor{darkgray176}{RGB}{176,176,176}
\definecolor{lightgray204}{RGB}{204,204,204}
\definecolor{mediumpurple148103189}{RGB}{148,103,189}
\definecolor{orchid227119194}{RGB}{227,119,194}
\definecolor{sienna1408675}{RGB}{140,86,75}

\begin{axis}[
legend cell align={left},
legend style={
  fill opacity=1,
  draw opacity=1,
  text opacity=1,
  at={(0.5,-\legendDist)},
  anchor=north,
  draw=none
},
tick align=outside,
tick pos=left,
x grid style={darkgray176},
xlabel={Time horizon $T$},
xmin=-5, xmax=105,
xtick style={color=black},
y grid style={darkgray176},
ylabel={Worst-case regret},
ymin=-2, ymax=55,
ytick style={color=black},
  width=\figWidth,
  height=\figHeight,
  scale only axis,
 font=\small
]
\addplot [semithick, mediumpurple148103189, dashed, mark=o, mark size=2, mark options={solid}]
table {%
1 0.999999999986411
2 1.73205077164806
3 2.34209616885784
4 2.90282823623805
5 3.42168738269308
6 3.91697957391197
7 4.38834045416619
8 4.84470501303823
9 5.28476977360727
10 5.71400805695854
11 6.13109156852881
12 6.53978673154998
13 6.93890009089812
14 7.33121252392302
15 7.71568171523579
20 9.55298861900087
25 11.2764639001485
30 12.91523643836
35 14.4863179235929
40 16.0021461745957
45 17.4709535774897
50 18.8995128351095
55 20.2925817455325
60 21.6543713495292
65 22.9879410641388
70 24.2961479561498
75 25.5810970007411
80 26.8448520074568
85 28.0889320029728
90 29.31489412696
95 30.5238801404065
100 31.7171067431816
};
\addlegendentry{{\legendEntryFontSize Optimized algo.~(\ref{eq:OFW_multiple}), $r=1$,  from~(\ref{eq:jointstepsizeopt})}}
\addplot [semithick, sienna1408675, dashed, mark=triangle, mark size=2, mark options={solid}]
table {%
1 0.999999999986411
2 1.63299311979155
3 2.15168457507838
4 2.62509003041537
5 3.06278519425511
6 3.4795301018729
7 3.8762878127482
8 4.25983704757672
9 4.62989237992315
10 4.99045867833446
11 5.3410098087897
12 5.68423150190584
13 6.01957981552844
14 6.34900120418311
15 6.6719871075061
20 8.21488294667517
25 9.66223380277867
30 11.0384070754211
35 12.3578609109691
40 13.6309257863288
45 14.8646107403946
50 16.0645089825127
55 17.2346879049347
60 18.378617200216
65 19.4989121739457
70 20.597904053024
75 21.6774336420247
80 22.7391637690755
};
\addlegendentry{{\legendEntryFontSize Optimized algo.~(\ref{eq:OFW_multiple}), $r=2$, from~(\ref{eq:jointstepsizeopt})}}
\addplot [semithick, orchid227119194, dashed, mark=square, mark size=2, mark options={solid}]
table {%
1 0.999999999986411
2 1.5837637868675
3 2.05505753723153
4 2.48625209927649
5 2.88336510543731
6 3.26149038547669
7 3.62091546948163
8 3.96831411446438
9 4.30323394828769
10 4.62950562985113
11 4.94658359011182
12 5.25698225939425
13 5.560188906985
14 5.85799574095499
15 6.14994478087934
20 7.54419607303208
25 8.85176336291036
30 10.0948492043156
35 11.2866164238039
40 12.4364369256464
45 13.5506663055645
50 14.6343709611955
};
\addlegendentry{{\legendEntryFontSize Optimized algo.~(\ref{eq:OFW_multiple}), $r=3$, from~(\ref{eq:jointstepsizeopt})}}
\end{axis}

\end{tikzpicture}
    \end{minipage}
    \hfill
    \begin{minipage}[t]{0.48\textwidth}
        \vspace*{0pt}
        \centering
\begin{tikzpicture}

\definecolor{darkgray176}{RGB}{176,176,176}
\definecolor{gray127}{RGB}{127,127,127}
\definecolor{lightgray204}{RGB}{204,204,204}
\definecolor{mediumpurple148103189}{RGB}{148,103,189}

\begin{axis}[
legend cell align={left},
legend style={
  fill opacity=1,
  draw opacity=1,
  text opacity=1,
  at={(0.5,-\legendDist)},
  anchor=north,
  draw=none
},
tick align=outside,
tick pos=left,
x grid style={darkgray176},
xlabel={Time horizon $T$},
xmin=-5, xmax=105,
xtick style={color=black},
y grid style={darkgray176},
ymin=-2, ymax=55,
ytick style={color=black},
  width=\figWidth,
  height=\figHeight,
  scale only axis,
 font=\small
]
\addplot [semithick, gray127, dashed, mark=star, mark size=2, mark options={solid}]
table {%
1 0.999999999988875
2 1.73205079427809
3 2.34209622334741
4 2.90670363021072
5 3.44220483725166
6 3.95167756792504
7 4.44511777330512
8 4.92357067946405
9 5.39080426794702
10 5.84791808629133
11 6.29615347280525
12 6.73716873663269
13 7.17116723126338
14 7.59935835351766
15 8.02193620499087
20 10.0681475303238
25 12.0284923064602
30 13.9253024928537
35 15.7722150044208
40 17.5783873084459
45 19.3503333022518
50 21.0929670020047
55 22.8100363786092
60 24.5043424164102
65 26.1782977655244
70 27.8339857818303
75 29.4730527040755
80 31.0969519615577
85 32.7069906330521
90 34.3032966796525
95 35.8879962752747
100 37.4610375681626
};
\addlegendentry{{\legendEntryFontSize Optimized algo.~from~(\ref{eq:jointstepsizeopt}) with $\beta_{t,s}=0$}}
\addplot [semithick, mediumpurple148103189, dashed, mark=o, mark size=2, mark options={solid}]
table {%
1 0.999999999986411
2 1.73205077164806
3 2.34209616885784
4 2.90282823623805
5 3.42168738269308
6 3.91697957391197
7 4.38834045416619
8 4.84470501303823
9 5.28476977360727
10 5.71400805695854
11 6.13109156852881
12 6.53978673154998
13 6.93890009089812
14 7.33121252392302
15 7.71568171523579
20 9.55298861900087
25 11.2764639001485
30 12.91523643836
35 14.4863179235929
40 16.0021461745957
45 17.4709535774897
50 18.8995128351095
55 20.2925817455325
60 21.6543713495292
65 22.9879410641388
70 24.2961479561498
75 25.5810970007411
80 26.8448520074568
85 28.0889320029728
90 29.31489412696
95 30.5238801404065
100 31.7171067431816
};
\addlegendentry{{\legendEntryFontSize Optimized algo.~from~(\ref{eq:jointstepsizeopt})}}
\end{axis}

\end{tikzpicture}
    \end{minipage}
    \hfill
    \end{subfigure}

    \repeatcaption{fig:Comparison_tau}{
        (Top left) Comparison of known upper bounds (respectively from~\cite[Theorem 7.3]{hazanIntroductionOnlineConvex2016} and Theorem~\ref{thm_upper_bound_OFW}) against tight numerical bounds
        (worst-case regrets) obtained from~\eqref{eq:PEP}, for~\textcolor{crimson2143940}{\cite[Algorithm 27]{hazanIntroductionOnlineConvex2016}} and \textcolor{forestgreen4416044}{Algorithm~\ref{OFW_alg_new} (parameters from Theorem~\ref{thm_upper_bound_OFW})}. 
        (Top right) Tight numerical regret bounds for \textcolor{crimson2143940}{\cite[Algorithm 27]{hazanIntroductionOnlineConvex2016}} and \textcolor{forestgreen4416044}{Algorithm~\ref{OFW_alg_new} (parameters from Theorem~\ref{thm_upper_bound_OFW})} against their anytime versions.
        (Bottom left) Tight numerical bounds for optimized online Frank--Wolfe with respectively $r\in\{1, 2,3\}$ linear optimization steps per time round
        (where \eqref{eq:jointstepsizeopt_multiple} is a variant of~\eqref{eq:jointstepsizeopt}
        with~\eqref{eq:OFW_general} replaced by~\eqref{eq:OFW_multiple}, which we detail in
        Appendix~B.5).
        (Bottom right) Tight numerical regret bounds for optimized online Frank--Wolfe with and without regularization (i.e.,~\eqref{eq:jointstepsizeopt} with and without $\beta_{t,s}=0$).
    }
    \end{figure}

\newcommand{\xminPlots}{1}
\newcommand{\xmaxPlots}{100}
\newcommand{\yminPlots}{1}
\newcommand{\ymaxPlots}{100}

\begin{figure}[p]
\begin{subfigure}{\textwidth}
    \centering

        \hfill
    \begin{minipage}[t]{0.48\textwidth}
        \vspace*{0pt}
        \centering
\begin{tikzpicture}

\definecolor{crimson2143940}{RGB}{214,39,40}
\definecolor{forestgreen4416044}{RGB}{44,160,44}
\definecolor{mediumpurple148103189}{RGB}{148,103,189}
\definecolor{darkgray176}{RGB}{176,176,176}
\definecolor{darkorange25512714}{RGB}{255,127,14}
\definecolor{lightgray204}{RGB}{204,204,204}
\definecolor{steelblue31119180}{RGB}{31,119,180}

\begin{axis}[
    xmode=log,
    ymode=log,
    log basis x={10},
    log basis y={10},
    log ticks with fixed point,
    grid=both,
    minor grid style={gray!20},
    major grid style={gray!50},
    legend cell align={left},
    legend style={
        fill opacity=1,
        draw opacity=1,
        text opacity=1,
        at={(0.5,-\legendDist)},
        anchor=north,
        draw=none
    },
    tick align=outside,
    tick pos=left,
    x grid style={darkgray176},
    y grid style={darkgray176},
    xlabel={Time horizon $T$},
    ylabel={Worst-case regret},
    xmin=\xminPlots, xmax=\xmaxPlots,
    ymin=\yminPlots, ymax=\ymaxPlots,
    width=\figWidth,
    height=\figHeight,
    scale only axis,
    font=\small
]
\addplot [thick, crimson2143940, mark=asterisk, mark size=1, mark options={solid}]
table {%
1 8
2 13.4543426440594
3 18.2360564556382
4 22.6274169979695
5 26.7496121990569
6 30.6692690038211
7 34.4281365652708
8 38.0546276800871
10 44.9873060152279
15 60.9759297785538
20 75.6593287202541
25 89.4427190999916
30 102.548881535096
35 115.117412732146
40 127.243316602728
45 138.995062234592
50 150.424123723456
55 161.570475875318
60 172.46597374228
65 183.136539862808
70 193.603639399487
75 203.885309381655
80 213.996897592455
90 233.760899142071
100 252.98221281347
};
\addlegendentry{{\legendEntryFontSize \cite[Algo.~27]{hazanIntroductionOnlineConvex2016}: bound~\cite[Theorem 7.3]{hazanIntroductionOnlineConvex2016}}}
\addplot [thick, crimson2143940, dashed, mark=asterisk, mark size=1, mark options={solid}]
table {%
1 0.999999978829868
2 1.86602540835967
3 2.86602530022636
4 3.83521194338494
5 4.84566679047157
6 5.77847121423904
7 6.64296540269502
8 7.47558796478797
10 9.06556940344561
15 12.7147146818818
20 16.0829251259494
25 19.2687001005565
30 22.3084311637338
35 25.2313421308435
40 28.0532078096698
45 30.7885720243218
50 33.4461650859791
55 36.0337573951143
60 38.5568065885238
65 41.02483254
70 43.44430215
75 45.8196833
80 48.15440388
90 52.7148001953334
100 57.147182191147
};
\addlegendentry{{\legendEntryFontSize \cite[Algo.~27]{hazanIntroductionOnlineConvex2016}: tight bound $B_T$ from~(\ref{eq:PEP})}}
\addplot [semithick, forestgreen4416044, mark size=3, mark options={solid}]
table {%
1 1.75476535060332
2 2.95115178586752
3 4
4 4.9632259152112
5 5.86741157862262
6 6.72717132202972
7 7.55166264132207
8 8.34711776039087
10 9.8677707265638
15 13.3748060995284
20 16.5955460610261
25 19.6188730425514
30 22.493653007614
35 25.2505058891838
40 27.9102703837895
45 30.4879648892769
50 32.9948800255984
55 35.4397840933123
60 37.829664360127
65 40.17020682258
70 42.466119771115
75 44.7213595499958
80 46.939292628981
90 51.2744407675481
100 55.4905526705042
};
\addlegendentry{{\legendEntryFontSize Algo.~\ref{OFW_alg_new}: bound from Theorem~\ref{thm_upper_bound_OFW}}}
\addplot [semithick, forestgreen4416044, dashed, mark=x, mark size=2, mark options={solid}]
table {%
1 0.99999997622009
2 1.86602536857043
3 2.79508504207169
4 3.41695016277647
5 4.00578768309647
6 4.56827781991907
7 5.10944325015655
8 5.63752492727876
10 6.66119581166219
15 9.02755847964825
20 11.1903480646082
25 13.2176738631319
30 15.1477451800894
35 17.0005288149411
40 18.7895976211105
45 20.5247895195582
50 22.2138706190613
55 23.8627054385189
60 25.4759388445294
65 27.057093360461
70 28.6091703686733
75 30.1349387581324
80 31.6366960286442
85 33.1162504419378
90 34.5752635225472
};
\addlegendentry{{\legendEntryFontSize Algo.~\ref{OFW_alg_new}: tight bound $B_T$ from~(\ref{eq:PEP})}}
\addplot [semithick, mediumpurple148103189, dashed, mark=o, mark size=2, mark options={solid}]
table {%
1 0.999999999986411
2 1.73205077164806
3 2.34209616885784
4 2.90282823623805
5 3.42168738269308
6 3.91697957391197
7 4.38834045416619
8 4.84470501303823
9 5.28476977360727
10 5.71400805695854
11 6.13109156852881
12 6.53978673154998
13 6.93890009089812
14 7.33121252392302
15 7.71568171523579
20 9.55298861900087
25 11.2764639001485
30 12.91523643836
35 14.4863179235929
40 16.0021461745957
45 17.4709535774897
50 18.8995128351095
55 20.2925817455325
60 21.6543713495292
65 22.9879410641388
70 24.2961479561498
75 25.5810970007411
80 26.8448520074568
85 28.0889320029728
90 29.31489412696
95 30.5238801404065
100 31.7171067431816
};
\addlegendentry{{\legendEntryFontSize Optimized algo.~and bound from~(\ref{eq:jointstepsizeopt})}}
\end{axis}

\end{tikzpicture}
    \end{minipage}
    \hfill
    \begin{minipage}[t]{0.48\textwidth}
        \vspace*{0pt}
        \centering
\begin{tikzpicture}

\definecolor{crimson2143940}{RGB}{214,39,40}
\definecolor{darkgray176}{RGB}{176,176,176}
\definecolor{darkturquoise23190207}{RGB}{23,190,207}
\definecolor{forestgreen4416044}{RGB}{44,160,44}
\definecolor{goldenrod18818934}{RGB}{188,189,34}
\definecolor{lightgray204}{RGB}{204,204,204}
\definecolor{steelblue31119180}{RGB}{31,119,180}

\begin{axis}[
    xmode=log,
    ymode=log,
    log basis x={10},
    log basis y={10},
    log ticks with fixed point,
    grid=both,
    minor grid style={gray!20},
    major grid style={gray!50},
    legend cell align={left},
    legend style={
        fill opacity=1,
        draw opacity=1,
        text opacity=1,
        at={(0.5,-\legendDist)},
        anchor=north,
        draw=none
    },
    tick align=outside,
    tick pos=left,
    x grid style={darkgray176},
    y grid style={darkgray176},
    xlabel={Time horizon $T$},
    xmin=\xminPlots, xmax=\xmaxPlots,
    ymin=\yminPlots, ymax=\ymaxPlots,
    width=\figWidth,
    height=\figHeight,
    scale only axis,
    font=\small
]
\addplot [semithick, forestgreen4416044, mark=Mercedes star, mark size=3, mark options={solid}]
table {%
1 2.193456688254154
2 3.688939732334405
3 5.0
4 6.204032394013997
5 7.334264473278278
6 8.408964152537145
7 9.439578301652581
8 10.433897200488582
10 12.334713408204754
15 16.71850762441055
20 20.74443257628261
25 24.523591303189267
30 28.117066259517458
35 31.56313236147981
40 34.88783797973685
45 38.109956111596105
50 41.24360003199805
55 44.299730116640355
60 47.28708045015879
65 50.212758528225045
70 53.08264971389377
75 55.90169943749474
80 58.674115786226224
85 61.40351851407624
90 64.0930509594351
95 66.74546564983403
100 69.3631908381303
};
\addlegendentry{{\legendEntryFontSize Anytime Algo.~\ref{OFW_alg_new_anytime}: bound from Theorem~\ref{thm_upper_bound_OFW_anytime}}}
\addplot [semithick, crimson2143940, dashed, mark=asterisk, mark size=1, mark options={solid}]
table {%
1 0.999999978829868
2 1.86602540835967
3 2.86602530022636
4 3.83521194338494
5 4.84566679047157
6 5.77847121423904
7 6.64296540269502
8 7.47558796478797
10 9.06556940344561
15 12.7147146818818
20 16.0829251259494
25 19.2687001005565
30 22.3084311637338
35 25.2313421308435
40 28.0532078096698
45 30.7885720243218
50 33.4461650859791
55 36.0337573951143
60 38.5568065885238
65 41.02483254
70 43.44430215
75 45.8196833
80 48.15440388
90 52.7148001953334
100 57.147182191147
};
\addlegendentry{{\legendEntryFontSize \cite[Algo.~27]{hazanIntroductionOnlineConvex2016}: tight bound $B_T$ from~(\ref{eq:PEP})}}
\addplot [thick, crimson2143940, dashed, mark=+, mark size=2, mark options={solid}]
table {%
1 0.999999978829868
2 1.86602540820839
3 2.86602528722214
4 3.81941917172879
5 4.7843884768231
6 5.66753177316362
7 6.50107951868022
8 7.31139133654082
9 8.09325316957615
10 8.85139312035817
11 9.58774550759338
12 10.3054843155196
13 11.0069165395407
14 11.6938619340762
15 12.3679052961251
20 15.5818013204948
25 18.5962663625794
30 21.4601077826225
35 24.2075625818537
40 26.8596151684412
45 29.4313051952187
50 31.9318777341248
55 34.3709642044624
60 36.7548638838124
65 39.0892721371677
70 41.3798427025696
75 43.63062719759103
};
\addlegendentry{{\legendEntryFontSize Anytime \cite[Algo.~27]{hazanIntroductionOnlineConvex2016}: tight bound from~(\ref{eq:PEP})}}
\addplot [semithick, forestgreen4416044, dashed, mark=square, mark size=2, mark options={solid}]
table {%
1 0.99999997622009
2 1.86602539808008
3 2.70563299799477
4 3.58153879868095
5 4.36926755755856
6 5.10221940117672
7 5.79270161250749
8 6.4526213576502
10 7.70024853201964
15 10.5307220687907
20 13.0925557276164
25 15.4754138829171
30 17.7332251672391
35 19.895717259638
40 21.9816644090207
45 24.0038401515299
50 25.9713602247288
55 27.8912843690133
60 29.7692345570552
65 31.6097702772639
70 33.4164604156972
75 35.1922403201557
80 36.9396265985578
85 38.6611721882096
90 40.3588001390253
};
\addlegendentry{{\legendEntryFontSize Anytime Algo.~\ref{OFW_alg_new_anytime}: tight bound $B_T$ from~(\ref{eq:PEP})}}
\addplot [semithick, forestgreen4416044, dashed, mark=x, mark size=2, mark options={solid}]
table {%
1 0.99999997622009
2 1.86602536857043
3 2.79508504207169
4 3.41695016277647
5 4.00578768309647
6 4.56827781991907
7 5.10944325015655
8 5.63752492727876
10 6.66119581166219
15 9.02755847964825
20 11.1903480646082
25 13.2176738631319
30 15.1477451800894
35 17.0005288149411
40 18.7895976211105
45 20.5247895195582
50 22.2138706190613
55 23.8627054385189
60 25.4759388445294
65 27.057093360461
70 28.6091703686733
75 30.1349387581324
80 31.6366960286442
85 33.1162504419378
90 34.5752635225472
};
\addlegendentry{{\legendEntryFontSize Algo.~\ref{OFW_alg_new}: tight bound $B_T$ from~(\ref{eq:PEP})}}
\end{axis}

\end{tikzpicture}
    \end{minipage}
    \hfill
    
\medskip

    \hfill
        \begin{minipage}[t]{0.48\textwidth}
        \vspace*{0pt}
        \centering
\begin{tikzpicture}

\definecolor{darkgray176}{RGB}{176,176,176}
\definecolor{lightgray204}{RGB}{204,204,204}
\definecolor{mediumpurple148103189}{RGB}{148,103,189}
\definecolor{orchid227119194}{RGB}{227,119,194}
\definecolor{sienna1408675}{RGB}{140,86,75}

\begin{axis}[
    xmode=log,
    ymode=log,
    log basis x={10},
    log basis y={10},
    log ticks with fixed point,
    grid=both,
    minor grid style={gray!20},
    major grid style={gray!50},
    legend cell align={left},
    legend style={
        fill opacity=1,
        draw opacity=1,
        text opacity=1,
        at={(0.5,-\legendDist)},
        anchor=north,
        draw=none
    },
    tick align=outside,
    tick pos=left,
    x grid style={darkgray176},
    y grid style={darkgray176},
    xlabel={Time horizon $T$},
    ylabel={Worst-case regret},
    xmin=\xminPlots, xmax=\xmaxPlots,
    ymin=\yminPlots, ymax=\ymaxPlots,
    width=\figWidth,
    height=\figHeight,
    scale only axis,
    font=\small
]
\addplot [semithick, mediumpurple148103189, dashed, mark=o, mark size=2, mark options={solid}]
table {%
1 0.999999999986411
2 1.73205077164806
3 2.34209616885784
4 2.90282823623805
5 3.42168738269308
6 3.91697957391197
7 4.38834045416619
8 4.84470501303823
9 5.28476977360727
10 5.71400805695854
11 6.13109156852881
12 6.53978673154998
13 6.93890009089812
14 7.33121252392302
15 7.71568171523579
20 9.55298861900087
25 11.2764639001485
30 12.91523643836
35 14.4863179235929
40 16.0021461745957
45 17.4709535774897
50 18.8995128351095
55 20.2925817455325
60 21.6543713495292
65 22.9879410641388
70 24.2961479561498
75 25.5810970007411
80 26.8448520074568
85 28.0889320029728
90 29.31489412696
95 30.5238801404065
100 31.7171067431816
};
\addlegendentry{{\legendEntryFontSize Optimized algo.~(\ref{eq:OFW_multiple}), $r=1$,  from~(\ref{eq:jointstepsizeopt})}}
\addplot [semithick, sienna1408675, dashed, mark=triangle, mark size=2, mark options={solid}]
table {%
1 0.999999999986411
2 1.63299311979155
3 2.15168457507838
4 2.62509003041537
5 3.06278519425511
6 3.4795301018729
7 3.8762878127482
8 4.25983704757672
9 4.62989237992315
10 4.99045867833446
11 5.3410098087897
12 5.68423150190584
13 6.01957981552844
14 6.34900120418311
15 6.6719871075061
20 8.21488294667517
25 9.66223380277867
30 11.0384070754211
35 12.3578609109691
40 13.6309257863288
45 14.8646107403946
50 16.0645089825127
55 17.2346879049347
60 18.378617200216
65 19.4989121739457
70 20.597904053024
75 21.6774336420247
80 22.7391637690755
};
\addlegendentry{{\legendEntryFontSize Optimized algo.~(\ref{eq:OFW_multiple}), $r=2$, from~(\ref{eq:jointstepsizeopt})}}
\addplot [semithick, orchid227119194, dashed, mark=square, mark size=2, mark options={solid}]
table {%
1 0.999999999986411
2 1.5837637868675
3 2.05505753723153
4 2.48625209927649
5 2.88336510543731
6 3.26149038547669
7 3.62091546948163
8 3.96831411446438
9 4.30323394828769
10 4.62950562985113
11 4.94658359011182
12 5.25698225939425
13 5.560188906985
14 5.85799574095499
15 6.14994478087934
20 7.54419607303208
25 8.85176336291036
30 10.0948492043156
35 11.2866164238039
40 12.4364369256464
45 13.5506663055645
50 14.6343709611955
};
\addlegendentry{{\legendEntryFontSize Optimized algo.~(\ref{eq:OFW_multiple}), $r=3$, from~(\ref{eq:jointstepsizeopt})}}
\end{axis}

\end{tikzpicture}
    \end{minipage}
    \hfill
    \begin{minipage}[t]{0.48\textwidth}
        \vspace*{0pt}
        \centering
\begin{tikzpicture}

\definecolor{darkgray176}{RGB}{176,176,176}
\definecolor{gray127}{RGB}{127,127,127}
\definecolor{lightgray204}{RGB}{204,204,204}
\definecolor{mediumpurple148103189}{RGB}{148,103,189}

\begin{axis}[
    xmode=log,
    ymode=log,
    log basis x={10},
    log basis y={10},
    log ticks with fixed point,
    grid=both,
    minor grid style={gray!20},
    major grid style={gray!50},
    legend cell align={left},
    legend style={
        fill opacity=1,
        draw opacity=1,
        text opacity=1,
        at={(0.5,-\legendDist)},
        anchor=north,
        draw=none
    },
    tick align=outside,
    tick pos=left,
    x grid style={darkgray176},
    y grid style={darkgray176},
    xlabel={Time horizon $T$},
    xmin=\xminPlots, xmax=\xmaxPlots,
    ymin=\yminPlots, ymax=\ymaxPlots,
    width=\figWidth,
    height=\figHeight,
    scale only axis,
    font=\small
]
\addplot [semithick, gray127, dashed, mark=star, mark size=2, mark options={solid}]
table {%
1 0.999999999988875
2 1.73205079427809
3 2.34209622334741
4 2.90670363021072
5 3.44220483725166
6 3.95167756792504
7 4.44511777330512
8 4.92357067946405
9 5.39080426794702
10 5.84791808629133
11 6.29615347280525
12 6.73716873663269
13 7.17116723126338
14 7.59935835351766
15 8.02193620499087
20 10.0681475303238
25 12.0284923064602
30 13.9253024928537
35 15.7722150044208
40 17.5783873084459
45 19.3503333022518
50 21.0929670020047
55 22.8100363786092
60 24.5043424164102
65 26.1782977655244
70 27.8339857818303
75 29.4730527040755
80 31.0969519615577
85 32.7069906330521
90 34.3032966796525
95 35.8879962752747
100 37.4610375681626
};
\addlegendentry{{\legendEntryFontSize Optimized algo.~from~(\ref{eq:jointstepsizeopt}) with $\beta_{t,s}=0$}}
\addplot [semithick, mediumpurple148103189, dashed, mark=o, mark size=2, mark options={solid}]
table {%
1 0.999999999986411
2 1.73205077164806
3 2.34209616885784
4 2.90282823623805
5 3.42168738269308
6 3.91697957391197
7 4.38834045416619
8 4.84470501303823
9 5.28476977360727
10 5.71400805695854
11 6.13109156852881
12 6.53978673154998
13 6.93890009089812
14 7.33121252392302
15 7.71568171523579
20 9.55298861900087
25 11.2764639001485
30 12.91523643836
35 14.4863179235929
40 16.0021461745957
45 17.4709535774897
50 18.8995128351095
55 20.2925817455325
60 21.6543713495292
65 22.9879410641388
70 24.2961479561498
75 25.5810970007411
80 26.8448520074568
85 28.0889320029728
90 29.31489412696
95 30.5238801404065
100 31.7171067431816
};
\addlegendentry{{\legendEntryFontSize Optimized algo.~from~(\ref{eq:jointstepsizeopt})}}
\end{axis}

\end{tikzpicture}
    \end{minipage}
    \hfill
    \end{subfigure}

    \caption{Variant of Figure~\ref{fig:Comparison_tau} with log-log plots.
        (Top left) Comparison of known upper bounds (respectively from~\cite[Theorem 7.3]{hazanIntroductionOnlineConvex2016} and Theorem~\ref{thm_upper_bound_OFW}) against tight numerical bounds
        (worst-case regrets) obtained from~\eqref{eq:PEP}, for~\textcolor{crimson2143940}{\cite[Algorithm 27]{hazanIntroductionOnlineConvex2016}} and \textcolor{forestgreen4416044}{Algorithm~\ref{OFW_alg_new} (parameters from Theorem~\ref{thm_upper_bound_OFW})}. 
        (Top right) Tight numerical regret bounds for \textcolor{crimson2143940}{\cite[Algorithm 27]{hazanIntroductionOnlineConvex2016}} and \textcolor{forestgreen4416044}{Algorithm~\ref{OFW_alg_new} (parameters from Theorem~\ref{thm_upper_bound_OFW})} against their anytime versions.
        (Bottom left) Tight numerical bounds for optimized online Frank--Wolfe with respectively $r\in\{1, 2,3\}$ linear optimization steps per time round
        (where \eqref{eq:jointstepsizeopt_multiple} is a variant of~\eqref{eq:jointstepsizeopt}
        with~\eqref{eq:OFW_general} replaced by~\eqref{eq:OFW_multiple}, which we detail in
        Appendix~B.5).
        (Bottom right) Tight numerical regret bounds for optimized online Frank--Wolfe with and without regularization (i.e.,~\eqref{eq:jointstepsizeopt} with and without $\beta_{t,s}=0$).
    }
    \label{fig:Comparison_tau_log_log}
\end{figure}

\renewcommand{\xminPlots}{1}
\renewcommand{\xmaxPlots}{100}
\renewcommand{\yminPlots}{0.6}
\renewcommand{\ymaxPlots}{1}

\begin{figure}[p]
\begin{subfigure}{\textwidth}
    \centering

        \hfill
    \begin{minipage}[t]{0.48\textwidth}
        \vspace*{0pt}
        \centering
\begin{tikzpicture}

\definecolor{crimson2143940}{RGB}{214,39,40}
\definecolor{forestgreen4416044}{RGB}{44,160,44}
\definecolor{mediumpurple148103189}{RGB}{148,103,189}
\definecolor{darkgray176}{RGB}{176,176,176}
\definecolor{darkorange25512714}{RGB}{255,127,14}
\definecolor{lightgray204}{RGB}{204,204,204}
\definecolor{steelblue31119180}{RGB}{31,119,180}

\begin{axis}[
    legend cell align={left},
    legend style={
        fill opacity=1,
        draw opacity=1,
        text opacity=1,
        at={(0.5,-\legendDist)},
        anchor=north,
        draw=none
    },
    tick align=outside,
    tick pos=left,
    x grid style={darkgray176},
    y grid style={darkgray176},
    xlabel={Time horizon $T$},
    ylabel={Estimated regret rate exponent},
    xmin=\xminPlots, xmax=\xmaxPlots,
    ymin=\yminPlots, ymax=\ymaxPlots,
    width=\figWidth,
    height=\figHeight,
    scale only axis,
    font=\small
]
\addplot [thick, crimson2143940, dashed, mark=asterisk, mark size=1, mark options={solid}]
table {%
1 0.912903415724768
2 0.916214169987245
3 0.891189009807578
4 0.873858428574496
5 0.852167856233249
6 0.839315094900244
7 0.832417270196420
8 0.827173884427762
10 0.819653156769017
15 0.811089971565822
20 0.806990593112379
25 0.803430300216490
35 0.798713772166231
40 0.796498362897148
45 0.794590928813371
50 0.792780214809520
55 0.791062853534045
60 0.789396507506225
65 0.787921412623536
70 0.786636025395031
75 0.785502956683877
80 0.784487113611230
90 0.782743445988397
100 0.781301200533998
};
\addlegendentry{{\legendEntryFontSize \cite[Algo.~27]{hazanIntroductionOnlineConvex2016}: tight bound $B_T$ from~(\ref{eq:PEP})}}
\addplot [semithick, forestgreen4416044, dashed, mark=x, mark size=2, mark options={solid}]
table {%
1 0.799086735252323
2 0.773265203700880
3 0.733952964595144
4 0.739044651159048
5 0.742349335729479
6 0.744803753779267
7 0.746767523451280
8 0.747798142042031
10 0.747809465286182
15 0.746695291135244
20 0.746796839028954
25 0.747562862157702
35 0.748571219936961
40 0.748922965395484
45 0.749217889522795
50 0.749502747585108
55 0.749774403225830
60 0.750032256774033
65 0.750265262522091
70 0.750474715610082
75 0.750671749559617
80 0.750860583970982
85 0.751038949335933
90 0.751208428959683
};
\addlegendentry{{\legendEntryFontSize Algo.~\ref{OFW_alg_new}: tight bound $B_T$ from~(\ref{eq:PEP})}}
\addplot [semithick, mediumpurple148103189, dashed, mark=o, mark size=2, mark options={solid}]
table {%
1 0.752659575802967
2 0.741731141039525
3 0.741262281325756
4 0.740503068552256
5 0.740994037439337
6 0.740932488631831
7 0.741392013841200
8 0.741448073567512
9 0.741827173003506
10 0.741900987755124
11 0.742216009082672
12 0.742283431387908
13 0.742552050546178
14 0.742606168721942
15 0.742842275360739
20 0.743304755990148
30 0.744234654089516
35 0.744450425118872
40 0.744686028965672
45 0.744865475213617
50 0.745034312196071
55 0.745173683002126
60 0.745303176735264
65 0.745414476915300
70 0.745518287393159
75 0.745609689497942
80 0.745695528731086
85 0.745772372425718
90 0.745844923478841
95 0.745910729667112
100 0.745973219203652
};
\addlegendentry{{\legendEntryFontSize Optimized algo.~and bound from~(\ref{eq:jointstepsizeopt})}}
\addplot [semithick, black] coordinates {(\xminPlots,0.75) (\xmaxPlots,0.75)};
\addlegendentry{{\legendEntryFontSize $y=3/4$}}
\end{axis}

\end{tikzpicture}
    \end{minipage}
    \hfill
    \begin{minipage}[t]{0.48\textwidth}
        \vspace*{0pt}
        \centering
\begin{tikzpicture}

\definecolor{crimson2143940}{RGB}{214,39,40}
\definecolor{darkgray176}{RGB}{176,176,176}
\definecolor{darkturquoise23190207}{RGB}{23,190,207}
\definecolor{forestgreen4416044}{RGB}{44,160,44}
\definecolor{goldenrod18818934}{RGB}{188,189,34}
\definecolor{lightgray204}{RGB}{204,204,204}
\definecolor{steelblue31119180}{RGB}{31,119,180}

\begin{axis}[
    legend cell align={left},
    legend style={
        fill opacity=1,
        draw opacity=1,
        text opacity=1,
        at={(0.5,-\legendDist)},
        anchor=north,
        draw=none
    },
    tick align=outside,
    tick pos=left,
    x grid style={darkgray176},
    y grid style={darkgray176},
    xlabel={Time horizon $T$},
    xmin=\xminPlots, xmax=\xmaxPlots,
    ymin=\yminPlots, ymax=\ymaxPlots,
    width=\figWidth,
    height=\figHeight,
    scale only axis,
    font=\small
]
\addplot [semithick, crimson2143940, dashed, mark=asterisk, mark size=1, mark options={solid}]
table {%
1 0.912903415724768
2 0.916214169987245
3 0.891189009807578
4 0.873858428574496
5 0.852167856233249
6 0.839315094900244
7 0.832417270196420
8 0.827173884427762
10 0.819653156769017
15 0.811089971565822
20 0.806990593112379
25 0.803430300216490
35 0.798713772166231
40 0.796498362897148
45 0.794590928813371
50 0.792780214809520
55 0.791062853534045
60 0.789396507506225
65 0.787921412623536
70 0.786636025395031
75 0.785502956683877
80 0.784487113611230
90 0.782743445988397
100 0.781301200533998
};
\addlegendentry{{\legendEntryFontSize \cite[Algo.~27]{hazanIntroductionOnlineConvex2016}: tight bound $B_T$ from~(\ref{eq:PEP})}}
\addplot [thick, crimson2143940, dashed, mark=+, mark size=2, mark options={solid}]
table {%
1 0.931796394378238
2 0.943133661787073
3 0.912812786696562
4 0.893198274127307
5 0.867996321270575
6 0.854847400657382
7 0.847000257203278
8 0.839201652778071
9 0.832969734956632
10 0.827673702169855
11 0.823422690651243
12 0.819898244476085
13 0.816919284662145
15 0.812268721614491
20 0.804765159438654
25 0.800072346863818
30 0.796614316392275
35 0.794072207971560
40 0.792095880962948
45 0.790502524978181
50 0.789134183894479
55 0.787961036207844
60 0.786928146347430
65 0.786009537538491
70 0.785199465063475
75 0.784479939177425
};
\addlegendentry{{\legendEntryFontSize Anytime \cite[Algo.~27]{hazanIntroductionOnlineConvex2016}: tight bound from~(\ref{eq:PEP})}}
\addplot [semithick, forestgreen4416044, mark=Mercedes star, mark size=3, mark options={solid}]
table {%
1 0.787189167199614
2 0.788814525614923
3 0.787433617567582
4 0.785858554639797
5 0.784426384145482
6 0.783170250555516
7 0.782071259555838
8 0.781104037969276
10 0.779478660150428
15 0.776577871047360
20 0.774614824930819
25 0.773166132730741
30 0.772035994512196
40 0.770355984356069
45 0.769705275099013
50 0.769141489009257
55 0.768646278068820
60 0.768206354717752
65 0.767811818354422
70 0.767455114155121
75 0.767130359427163
80 0.766832893077508
85 0.766558965620909
90 0.766305520571186
95 0.766070036917877
100 0.765850413438743
};
\addlegendentry{{\legendEntryFontSize Anytime Algo.~\ref{OFW_alg_new_anytime}: bound~\eqref{eq:bound_anytime_OFW_with_sums} from proof of Thm.~\ref{thm_upper_bound_OFW_anytime}}} 
\addplot [semithick, forestgreen4416044, dashed, mark=square, mark size=2, mark options={solid}]
table {%
1 0.845419923980947
2 0.831457738059934
3 0.816518841591583
4 0.793907920959655
5 0.781826268555319
6 0.774036933438049
7 0.768811270034725
8 0.764856501635855
10 0.759310096427199
15 0.751850582899993
20 0.748267109514892
25 0.746962931809545
35 0.746441647517257
40 0.746548646581950
45 0.746732110207052
50 0.746936675164095
55 0.747149798906259
60 0.747367181556198
65 0.747586407325646
70 0.747799123488908
75 0.748000149888328
80 0.748188184716978
85 0.748372831989320
90 0.748552939461185
};
\addlegendentry{{\legendEntryFontSize Anytime Algo.~\ref{OFW_alg_new_anytime}: tight bound $B_T$ from~(\ref{eq:PEP})}}
\addplot [semithick, forestgreen4416044, dashed, mark=x, mark size=2, mark options={solid}]
table {%
1 0.799086735252323
2 0.773265203700880
3 0.733952964595144
4 0.739044651159048
5 0.742349335729479
6 0.744803753779267
7 0.746767523451280
8 0.747798142042031
10 0.747809465286182
15 0.746695291135244
20 0.746796839028954
25 0.747562862157702
35 0.748571219936961
40 0.748922965395484
45 0.749217889522795
50 0.749502747585108
55 0.749774403225830
60 0.750032256774033
65 0.750265262522091
70 0.750474715610082
75 0.750671749559617
80 0.750860583970982
85 0.751038949335933
90 0.751208428959683
};
\addlegendentry{{\legendEntryFontSize Algo.~\ref{OFW_alg_new}: tight bound $B_T$ from~(\ref{eq:PEP})}}
\addplot [semithick, black] coordinates {(\xminPlots,0.75) (\xmaxPlots,0.75)};
\addlegendentry{{\legendEntryFontSize $y=3/4$}}
\end{axis}

\end{tikzpicture}
    \end{minipage}
    \hfill
    
\medskip

    \hfill
        \begin{minipage}[t]{0.48\textwidth}
        \vspace*{0pt}
        \centering
\begin{tikzpicture}

\definecolor{darkgray176}{RGB}{176,176,176}
\definecolor{lightgray204}{RGB}{204,204,204}
\definecolor{mediumpurple148103189}{RGB}{148,103,189}
\definecolor{orchid227119194}{RGB}{227,119,194}
\definecolor{sienna1408675}{RGB}{140,86,75}

\begin{axis}[
    legend cell align={left},
    legend style={
        fill opacity=1,
        draw opacity=1,
        text opacity=1,
        at={(0.5,-\legendDist)},
        anchor=north,
        draw=none
    },
    tick align=outside,
    tick pos=left,
    x grid style={darkgray176},
    y grid style={darkgray176},
    xlabel={Time horizon $T$},
    ylabel={Estimated regret rate exponent},
    xmin=\xminPlots, xmax=\xmaxPlots,
    ymin=\yminPlots, ymax=\ymaxPlots,
    width=\figWidth,
    height=\figHeight,
    scale only axis,
    font=\small
]
\addplot [semithick, mediumpurple148103189, dashed, mark=o, mark size=2, mark options={solid}]
table {%
1 0.752659575802967
2 0.741731141039525
3 0.741262281325756
4 0.740503068552256
5 0.740994037439337
6 0.740932488631831
7 0.741392013841200
8 0.741448073567512
9 0.741827173003506
10 0.741900987755124
11 0.742216009082672
12 0.742283431387908
13 0.742552050546178
14 0.742606168721942
15 0.742842275360739
20 0.743304755990148
30 0.744234654089516
35 0.744450425118872
40 0.744686028965672
45 0.744865475213617
50 0.745034312196071
55 0.745173683002126
60 0.745303176735264
65 0.745414476915300
70 0.745518287393159
75 0.745609689497942
80 0.745695528731086
85 0.745772372425718
90 0.745844923478841
95 0.745910729667112
100 0.745973219203652
};
\addlegendentry{{\legendEntryFontSize Optimized algo.~(\ref{eq:OFW_multiple}), $r=1$,  from~(\ref{eq:jointstepsizeopt})}}
\addplot [semithick, sienna1408675, dashed, mark=triangle, mark size=2, mark options={solid}]
table {%
1 0.700842891305142
2 0.698546340756624
3 0.703144038888163
4 0.705729841891444
5 0.708705923434127
6 0.710495537724107
7 0.712519680472323
8 0.713777671041859
9 0.715281481539026
10 0.716189634753720
11 0.717393065714399
12 0.718021173731679
13 0.719075243085120
14 0.719207967484609
20 0.723123857379991
25 0.724918864796314
30 0.726343589335416
35 0.727459217928846
40 0.728387159310801
45 0.729161274934204
50 0.729829354776278
55 0.730407748316626
60 0.730919908391048
65 0.731374094101164
70 0.731783064588181
75 0.732152544244962
80 0.732489990158449
};
\addlegendentry{{\legendEntryFontSize Optimized algo.~(\ref{eq:OFW_multiple}), $r=2$, from~(\ref{eq:jointstepsizeopt})}}
\addplot [semithick, orchid227119194, dashed, mark=square, mark size=2, mark options={solid}]
table {%
1 0.667854926189417
2 0.669594892591982
3 0.677528039716150
4 0.681569612274872
5 0.686031459606080
6 0.688703490673877
7 0.691711839823645
8 0.693563498667992
9 0.695873484352289
10 0.697158703645201
11 0.699448678959690
13 0.700562714873368
14 0.702237410245523
15 0.703071878919015
20 0.707132476548424
25 0.709920197933992
30 0.712075484480631
35 0.713780764629417
40 0.715193462223356
45 0.716379337354479
50 0.717401763489467
};
\addlegendentry{{\legendEntryFontSize Optimized algo.~(\ref{eq:OFW_multiple}), $r=3$, from~(\ref{eq:jointstepsizeopt})}}
\addplot [semithick, black] coordinates {(\xminPlots,0.75) (\xmaxPlots,0.75)};
\addlegendentry{{\legendEntryFontSize $y=3/4$}}
\end{axis}

\end{tikzpicture}
    \end{minipage}
    \hfill
    \begin{minipage}[t]{0.48\textwidth}
        \vspace*{0pt}
        \centering
\begin{tikzpicture}

\definecolor{darkgray176}{RGB}{176,176,176}
\definecolor{gray127}{RGB}{127,127,127}
\definecolor{lightgray204}{RGB}{204,204,204}
\definecolor{mediumpurple148103189}{RGB}{148,103,189}

\begin{axis}[
    legend cell align={left},
    legend style={
        fill opacity=1,
        draw opacity=1,
        text opacity=1,
        at={(0.5,-\legendDist)},
        anchor=north,
        draw=none
    },
    tick align=outside,
    tick pos=left,
    x grid style={darkgray176},
    y grid style={darkgray176},
    xlabel={Time horizon $T$},
    xmin=\xminPlots, xmax=\xmaxPlots,
    ymin=\yminPlots, ymax=\ymaxPlots,
    width=\figWidth,
    height=\figHeight,
    scale only axis,
    font=\small
]
\addplot [semithick, gray127, dashed, mark=star, mark size=2, mark options={solid}]
table {%
1 0.772716417054260
2 0.767292267136544
3 0.771711542496316
4 0.775004307547188
5 0.777393136775009
6 0.779991064099735
7 0.782009954346464
8 0.783936548485266
9 0.785574928470424
10 0.787078220938947
11 0.788495300691147
12 0.789731294013139
13 0.790929538686677
14 0.791996774031665
15 0.793026740764458
20 0.797251198705091
30 0.803137691638591
35 0.805331388861883
40 0.807210701519023
45 0.808850350501880
50 0.810306221979502
55 0.811613245045121
60 0.812789901737111
65 0.813859998931234
70 0.814844374390911
75 0.815754741815583
80 0.816602287841113
85 0.817397224777674
90 0.818124426996536
95 0.818819330491156
100 0.819467518730970
};
\addlegendentry{{\legendEntryFontSize Optimized algo.~from~(\ref{eq:jointstepsizeopt}) with $\beta_{t,s}=0$}}
\addplot [semithick, mediumpurple148103189, dashed, mark=o, mark size=2, mark options={solid}]
table {%
1 0.752659575802967
2 0.741731141039525
3 0.741262281325756
4 0.740503068552256
5 0.740994037439337
6 0.740932488631831
7 0.741392013841200
8 0.741448073567512
9 0.741827173003506
10 0.741900987755124
11 0.742216009082672
12 0.742283431387908
13 0.742552050546178
14 0.742606168721942
15 0.742842275360739
20 0.743304755990148
30 0.744234654089516
35 0.744450425118872
40 0.744686028965672
45 0.744865475213617
50 0.745034312196071
55 0.745173683002126
60 0.745303176735264
65 0.745414476915300
70 0.745518287393159
75 0.745609689497942
80 0.745695528731086
85 0.745772372425718
90 0.745844923478841
95 0.745910729667112
100 0.745973219203652
};
\addlegendentry{{\legendEntryFontSize Optimized algo.~from~(\ref{eq:jointstepsizeopt})}}
\addplot [semithick, black] coordinates {(\xminPlots,0.75) (\xmaxPlots,0.75)};
\addlegendentry{{\legendEntryFontSize $y=3/4$}}
\end{axis}

\end{tikzpicture}
    \end{minipage}
    \hfill
    \end{subfigure}

    \caption{
    Estimation of regret rate exponents for the tight worst-case regret bounds from Figure~\ref{fig:Comparison_tau}
    by interpolating the slopes of the log-log plots of Figure~\ref{fig:Comparison_tau_log_log}
    using~\eqref{eq:def_regret_rate_interpolation}.
        (Top left)         Tight numerical regret bounds
        (worst-case regrets) obtained from~\eqref{eq:PEP}, for~\textcolor{crimson2143940}{\cite[Algorithm 27]{hazanIntroductionOnlineConvex2016}} and \textcolor{forestgreen4416044}{Algorithm~\ref{OFW_alg_new} (parameters from Theorem~\ref{thm_upper_bound_OFW})}. 
        (Top right) Tight numerical regret bounds for \textcolor{crimson2143940}{\cite[Algorithm 27]{hazanIntroductionOnlineConvex2016}} and \textcolor{forestgreen4416044}{Algorithm~\ref{OFW_alg_new} (parameters from Theorem~\ref{thm_upper_bound_OFW})} against their anytime versions.
        The bound~\eqref{eq:bound_anytime_OFW_with_sums} from the proof of Theorem~\ref{thm_upper_bound_OFW_anytime} is also included
        to compare to the speed of convergence for the sum of terms in $s^{-1/4}$ in~\eqref{eq:bound_anytime_OFW_with_sums}.
        (Bottom left) Tight numerical bounds for optimized online Frank--Wolfe with respectively $r\in\{1, 2,3\}$ linear optimization steps per time round
        (where \eqref{eq:jointstepsizeopt_multiple} is a variant of~\eqref{eq:jointstepsizeopt}
        with~\eqref{eq:OFW_general} replaced by~\eqref{eq:OFW_multiple}, which we detail in
        Appendix~B.5).
        (Bottom right) Tight numerical regret bounds for optimized online Frank--Wolfe with and without regularization (i.e.,~\eqref{eq:jointstepsizeopt} with and without $\beta_{t,s}=0$).
    }
    \label{fig:Comparison_tau_slopes}
\end{figure}

\section{The PEP methodology for OGD and FTRL}
\label{sec:PEP_for_OGD_and_FTRL}

This section presents how to leverage the semidefinite programming method presented in Section~\ref{sec:PEP_for_OFW} to compute tight worst-case regret bounds for OGD and FTRL.
We use the same assumptions as for OFW: the cost functions $\ell_t$ are convex and $L$-Lipschitz
and the domain of feasible points $\cK$ is convex closed with diameter bounded by $D$.

\subsection{Deriving proof for FTRL from the PEP methodology}
\label{s:PEP_for_FTRL}

We start by studying FTRL with the same regularization parameter for all time steps using the PEP methodology.
For convenience, we restate the definition of FTRL with single parameter
(note that in this section we will use notations with $x$ for FTRL iterates,
whereas we used notations with $y$ for FTRL iterates in the previous sections to distinguish them from OFW iterates).
\begin{algorithm}[H]
\caption{Follow The Regularized Leader (FTRL)}
\label{FTRL_alg_appendix}
\begin{algorithmic}[1]
\Require $T\geq 1$,~ $x_1 \in \cK$, $\eta \geq 0$
\For{$t=1$ to $T$} 
    \State Play $x_t$, pay cost $\ell_t(x_t)$,  observe $g_t = \nabla\ell_t(x_t)$.
    \State $x_{t+1} \gets \argmin_{x\in \cK} \eta \langle  \sum_{s=1}^t g_s , x \rangle + \frac{1}{2} \Vert x - x_1 \Vert^2 $
\EndFor
\end{algorithmic}
\end{algorithm}

Adapting the PEP formulation~\eqref{eq:PEP} of OFW to the case of FTRL, we get:
\begin{equation}\label{eq:PEP_FTRL}
\begin{aligned}
B_T(\eta)\triangleq
\sup_{\substack{\cK, \{\ell_t\}_{t\in\llbracket 1,T \rrbracket}\\ 
			x_\star, \{x_t\}_{t\in\llbracket 1,T \rrbracket}\\
			d\in\mathbb{N}}} \,
&
R_T(x_1,\ldots,x_T; x_\star)\\
\text{subject to: } 
&  \ell_t \text{ is convex and $L$-Lipschitz for }t\in\llbracket 1,T \rrbracket,\\
& \cK \text{ is a non-empty closed convex set of $\mathbb{R}^d$,}\\ 
&\mathrm{Diam}(\cK)\leq D,\\
& \{x_t\}_{t=1,\ldots,T} \text{ is generated by Algorithm~\ref{FTRL_alg_appendix}}.\\
\end{aligned}
\end{equation}

We now show how to use~\eqref{eq:PEP_FTRL}
to derive a proof that $B_T(\eta)$ is an upper bound of the regret for FTRL.

\begin{lemma}\label{lemma_bound_FTRL}
Let $T\geq 1$.
Assume that the cost functions $\ell_t$ are convex and $L$-Lipschitz for all $t\in \llbracket 1, T \rrbracket$,
and that the convex closed domain $\cK$ of feasible points has a diameter bounded by $D$.
Then, for any $x_\star \in\cK$, the following upper bound 
on the regret of the FTRL Algorithm~\ref{FTRL_alg} holds:
\begin{equation*}
R_T(x_1,\cdots, x_T; x_\star)
\leq \frac{\eta}{2} \sum_{t=1}^T \Vert g_t \Vert^2
	+ \frac{1}{2\eta} \Vert x_\star - x_1 \Vert^2.
\end{equation*}
In particular, for $\eta = D / (L \sqrt{T})$, we get that the regret is upper bounded by
$D L \sqrt{T}$.
\end{lemma}

Note that the tight regret upper bound from Lemma~\ref{lemma_bound_FTRL} is not new,
it can be found, e.g., in \cite[Corollary~7.9]{orabona2019modern}.
Nevertheless, the proof of Lemma~\ref{lemma_bound_FTRL}
will allow us to see how to easily find
simple clean proofs for regret upper bounds using the PEP methodology.

\begin{remark}
Also note that it is important to use the regularization in FTRL 
with respect to $x_1$ (or any point in $\cK$),
as if we were to regularize with respect to an arbitrary point 
we would have the trivial upper bound $L D T$ for the regret
(as FTRL can be equivalently defined as online mirror descent with greedy/lazy updates,
this can be seen as having the feasible set $\cK$ far from 
the reference regularization point, say the origin, 
and thus projection of gradient descent iterates
on $\cK$ would always land on the same point
opposite of the optimum).
\end{remark}

We start by explaining how to use~\eqref{eq:PEP_FTRL} to infer
the proof of Lemma~\ref{lemma_bound_FTRL},
and then we will do the proof itself.

We first rewrite~\eqref{eq:PEP_FTRL} as an SDP using a similar argument to that of Section~\ref{s:wc_regret_construction}.
We sample function values and gradients for the convex loss functions $\ell_t$ 
and the indicator function $i_{\cK}$ of the closed convex set $\cK$ of feasible points,
and then use the interpolation / extension theorems \cite[Theorem 3.3 and Equation (7)]{taylor2017exact}
and \cite[Theorem 3.6]{taylor2017exact}, respectively.
For all $t\in\llbracket 1,T \rrbracket$, we denote $(f_t,g_t)$ and $(f_t^\star,g_t^\star)$ by the function values and gradients for the loss function $\ell_t$ at points $x_t$ and $x_\star$, respectively.
We denote by $s_t \in \partial i_{\cK}(x_t)$ (resp. $s_\star \in \partial i_{\cK}(x_\star)$) a sub-gradient of the indicator function $i_{\cK}$ at point $x_t$ for $t=1,\dots,T$ (resp. at point $x_\star$).
Hence, the upper bound on the regret for FTRL given by~\eqref{eq:PEP_FTRL}
can be reformulated as the following finite dimensional program:
\begin{equation}\label{eq:PEP_FTRL_1}\begin{aligned}
\sup_{\substack{
				x_\star, \{x_t\}_{t=1,\ldots,T}\\
				\{(f_t,f_t^\star)\}_{t=1,\ldots,T}\\
				\{(g_t,g_t^\star)\}_{t=1,\ldots,T}\\
				s_\star, \{s_t\}_{t=1,\ldots,T}\\
				d\in\mathbb{N}}} \,
	& \sum_{t=1}^T f_t - f_t^\star
	\\
\text{subject to: } 
	&  f_t \geq f_t^\star + \langle g_t^\star, x_t - x_\star \rangle \text{ for }t=1,\ldots,T,\\
	&  f_t^\star \geq f_t + \langle g_t, x_\star - x_t \rangle \text{ for }t=1,\ldots,T,\\
	& \Vert g_t \Vert \leq L \text{ and } \Vert g_t^\star \Vert \leq L
		\text{ for }t=1,\ldots,T,\\
	&\Vert x_t-x_\star\Vert\leq D, \text{ for } t=1,\ldots,T,\\
	&\Vert x_i-x_j\Vert\leq D, \text{ for } i,j=1,\ldots,T,\\
	& \langle s_t, x_\star - x_t \rangle \leq 0 
		\text{ and } \langle s_\star, x_t - x_\star \rangle \leq 0
		\text{ for }t=1,\ldots,T, \\
	& \langle s_i, x_j - x_i \rangle \leq 0 \text{ for } i,j=1,\ldots,T,\\
	& (x_t - x_1) + \eta \sum_{i=1}^{t-1} g_i + s_t = 0 \text{ for } t=1,\ldots,T. \\
\end{aligned}
\end{equation}
Note that the last constraint corresponds to the optimality condition of FTRL at each time $t$.

Moreover, as the values of the gradient $g_t^\star$ are never used, we can impose $g_t^\star=g_t$ for all $t$,
which gives a tight relaxation of the problem.
Indeed, this corresponds to consider only linear cost functions.
In particular, we get that among the worst-case instances, there is a worst-case instance involving only linear cost functions.
Hence, we get the following simpler reformulation of~\eqref{eq:PEP_FTRL}:
\begin{equation}\label{eq:PEP_FTRL_1bis}\begin{aligned}
\sup_{\substack{
				x_\star, \{x_t\}_{t=1,\ldots,T}\\
				\{g_t\}_{t=1,\ldots,T}\\
				s_\star, \{s_t\}_{t=1,\ldots,T}\\
				d\in\mathbb{N}}} \,
	& \sum_{t=1}^T \langle g_t, x_t - x_\star \rangle
	\\
\text{subject to: } 
	& \Vert g_t \Vert \leq L 
		\text{ for }t=1,\ldots,T,\\
	&\Vert x_t-x_\star\Vert\leq D, \text{ for } t=1,\ldots,T,\\
	&\Vert x_i-x_j\Vert\leq D, \text{ for } i,j=1,\ldots,T,\\
	& \langle s_t, x_\star - x_t \rangle \leq 0 
		\text{ and } \langle s_\star, x_t - x_\star \rangle \leq 0
		\text{ for }t=1,\ldots,T, \\
	& \langle s_i, x_j - x_i \rangle \leq 0 \text{ for } i,j=1,\ldots,T,\\
	& (x_t - x_1) + \eta \sum_{i=1}^{t-1} g_i + s_t = 0 \text{ for } t=1,\ldots,T. \\
\end{aligned}
\end{equation}
We now reformulate this program as an SDP.
Define the matrix:
\begin{equation*}
P= [ x_\star \vert x_1 \vert \cdots \vert x_T \vert g_1 \vert \cdots \vert g_T \vert s_\star  ],
\end{equation*}
and let $G = P^\transp P \succeq 0$ denote the Gram matrix containing all dots products of those vectors, which is of dimension $(2T+2) \times (2T+2)$.
Let $\{\bar{x}_t\}_{t=1,\dots,T}$ be the vector in $\R^{2T+2}$
such that $\bar{x}_i^\transp G \bar{x}_j = \langle x_i, x_j \rangle$,
and similarly for the other pair of variables.
Remark that we do not include the variables $\{s_t\}_{t=1,\dots,T}$ in $P$
as they are redundant due to the last constraint in~\eqref{eq:PEP_FTRL_1bis}.
Instead, we just define vectors $\{\bar{s}_t\}_{t=1,\dots,T}$
such that $\bar{s}_t = - (\bar{x}_t - \bar{x}_1) - \eta \sum_{i=1}^{t-1} \bar{g}_t$ for all $t$.
This allows us to redefine~\eqref{eq:PEP_FTRL_1bis} as
the following equivalent SDP:
\begin{equation}\label{eq:SDP_FTRL_1}\begin{aligned}
\sup_{\substack{
				G \succeq 0
			}} \,
	& \sum_{t=1}^T  \bar{g}_t^\transp G (\bar{x}_t - \bar{x}_\star)
	\\
\text{subject to: } 
	&  \bar{g}_t^\transp G \bar{g}_t \leq L 
		\text{ for }t=1,\ldots,T,\\
	& (\bar{x}_t-\bar{x}_\star)^\transp G (\bar{x}_t-\bar{x}_\star) \leq D, \text{ for } t=1,\ldots,T,\\
	& (\bar{x}_i-\bar{x}_j)^\transp G (\bar{x}_i-\bar{x}_j) \leq D, \text{ for } i,j=1,\ldots,T,\\
	&  \bar{s}_t^\transp G (\bar{x}_\star - \bar{x}_t ) \leq 0 
		\text{ and }  \bar{s}_\star^\transp G (\bar{x}_t - \bar{x}_\star) \leq 0
		\text{ for }t=1,\ldots,T, \\
	& \bar{s}_i^\transp G (\bar{x}_j - \bar{x}_i) \leq 0 \text{ for } i,j=1,\ldots,T.\\
\end{aligned}
\end{equation}
Note that the variable $d\in\mathbb{N}$ in~\eqref{eq:PEP_FTRL_1bis}
imposes that the rank of $G$ is upper bounded by $d$ in~\eqref{eq:SDP_FTRL_1},
but this condition disappear when taking the supremum other $d\in\mathbb{N}$.

The dual variables associated to each constraints in~\eqref{eq:SDP_FTRL_1}
indicate which of those inequalities are used in deriving a proof
of the upper bound on the regret of FTRL.
Choosing $\eta = D / (L \sqrt{T})$,
and running numerical simulations for moderate values of $T$
indicates that some dual variables have constantly small values
(several orders smaller than other dual variables).
Hence, relaxing the constraints associated to those dual variables with small values
gives us a new upper bound on the regret (indeed, with same value)
via the following SDP:
\begin{equation}\label{eq:SDP_FTRL_2}\begin{aligned}
\sup_{\substack{
				G \succeq 0
			}} \,
	& \sum_{t=1}^T  \bar{g}_t^\transp G (\bar{x}_t - \bar{x}_\star)
	\\
\text{subject to: } 
	&  \bar{g}_t^\transp G \bar{g}_t \leq L 
		\text{ for }t=1,\ldots,T,\\
	& (\bar{x}_1-\bar{x}_\star)^\transp G (\bar{x}_1-\bar{x}_\star) \leq D, \\
	&  \bar{s}_t^\transp G (\bar{x}_\star - \bar{x}_T ) \leq 0,  \\
	& \bar{s}_t^\transp G (\bar{x}_{t+1} - \bar{x}_t) \leq 0 \text{ for } t=1,\ldots,T-1.\\
\end{aligned}
\end{equation}
Then, running numerical simulations for~\eqref{eq:SDP_FTRL_2} 
with small values of $T$,
we observe that most dual variables take the same values, 
thus we group them accordingly and relax those constraints by summing them,
which gives us a new upper bound on the regret (indeed, with same value)
via the following SDP:
\begin{equation}\label{eq:SDP_FTRL_3}\begin{aligned}
\sup_{\substack{
				G \succeq 0
			}} \,
	& \sum_{t=1}^T  \bar{g}_t^\transp G (\bar{x}_t - \bar{x}_\star)
	\\
\text{subject to: } 
	&  \sum_{t=1}^T \bar{g}_t^\transp G \bar{g}_t \leq L T ,\\
	& (\bar{x}_1-\bar{x}_\star)^\transp G (\bar{x}_1-\bar{x}_\star) \leq D, \\
	&   \bar{s}_t^\transp G (\bar{x}_\star - \bar{x}_T ) + \sum_{t=1}^{T-1} \bar{s}_t^\transp G (\bar{x}_{t+1} - \bar{x}_t) \leq 0 .\\
\end{aligned}
\end{equation}
We now observe that the values of the dual variables for 
those three remaining constraints are
respectively $\eta/2$, $1/(2\eta)$ and $1/\eta$.
Looking at the two constraints involving constants $L$ and $D$,
we can already infer that the upper bound on the regret will be:
\begin{equation*}
\frac{\eta}{2} \sum_{t=1}^T \Vert g_t \Vert^2
	+ \frac{1}{2\eta} \Vert x_\star - x_1 \Vert^2,
\end{equation*}
which is upper bounded by $L D \sqrt{T}$ for $\eta = D / (L \sqrt{T})$.
From~\eqref{eq:SDP_FTRL_3}, 
we now that to prove this upper bound,
we only need to use the third constraint inequality (times $1/\eta$)
and some scalar product / euclidean norm inequalities
(which do not include any information on the relation between vectors).
In general, finding which scalar product inequalities to use
can be done by studying the Cholesky decomposition of the matrix dual variable for the SDP constraint
(see matrix $S$ in~\eqref{eq:dual_SDP_generic}, 
and see Appendix~\ref{a:proof_design} for an explanation on how we used the Cholesky decomposition in the case of OFW).
In this case, the scalar product inequalities to use are rather simple to guess,
and thus we will not need to look at this dual matrix variable.

Hence, we have all the ingredients and we are now ready 
to do the proof of Lemma~\ref{lemma_bound_FTRL}.

\begin{proof}[Proof of Lemma~\ref{lemma_bound_FTRL}]
For simplicity, we write $x_{T+1} = x_\star$.
We start by summing the boundary inequalities $\langle s_{t}, x_{t+1} - x_t \rangle \leq 0$ for $t\in\llbracket 1,T\rrbracket$.
As we have:
\begin{equation*}
\begin{aligned}
\bar{s}_t^\transp G (\bar{x}_\star - \bar{x}_T ) + \sum_{t=1}^{T-1} \bar{s}_t^\transp G (\bar{x}_{t+1} - \bar{x}_t)
& = \sum_{t=1}^{T} \langle s_t , x_{t+1} - x_t \rangle \\
& = 	- \sum_{t=1}^{T}  \langle x_t - x_1, x_{t+1} - x_t \rangle 
	- \eta \sum_{t=1}^{T} \sum_{i = 1}^{t-1} \langle g_i , x_{t+1} - x_t \rangle \\
& = - \sum_{t=1}^{T}  \langle x_t - x_1, x_{t+1} - x_t \rangle 
	-\eta \sum_{i = 1}^{T-1}  \langle g_i , x_{T+1} - x_{i+1} \rangle .	
\end{aligned}
\end{equation*}
this sum of inequalities gives the following inequality (which corresponds to the third inequality in~\eqref{eq:SDP_FTRL_3}):
\begin{equation*}
\eta \sum_{t = 1}^{T-1}  \langle g_t , x_{t+1} - x_\star \rangle
\leq \sum_{t=1}^{T-1}  \langle x_t - x_1, x_{t+1} - x_t \rangle 
	+ \langle x_T - x_1, x_\star - x_T \rangle .
\end{equation*}

Now, we use this inequality and
the inequality 
$\langle u, v \rangle \leq \frac{\eta}{2} \Vert u \Vert^2 + \frac{1}{2\eta} \Vert v \Vert^2$,
as well as the convexity of the cost functions $\ell_t$,
to upper bound the regret for FTRL:\allowdisplaybreaks[1]
\begin{align*}
R_T(x_1,\ldots,x_T; x_\star) 
& \leq \sum_{t=1}^T \langle g_t, x_t - x_\star \rangle \\
& = \sum_{t=1}^T \langle g_t, x_t - x_{t+1} \rangle
	+ \sum_{t=1}^T \langle g_t, x_{t+1} - x_\star \rangle \\
& \leq \sum_{t=1}^T \langle g_t, x_t - x_{t+1} \rangle
	+ \frac{1}{\eta} \sum_{t=1}^T \langle x_t - x_1, x_{t+1} - x_t \rangle \\
& \leq \frac{\eta}{2} \sum_{t=1}^T \Vert g_t \Vert^2
	+ \frac{1}{2\eta} \sum_{t=1}^T \Vert x_t - x_{t+1} \Vert^2
	+ \frac{1}{\eta} \sum_{t=1}^T \langle x_t - x_1, x_{t+1} - x_t \rangle \\
& = \frac{\eta}{2} \sum_{t=1}^T \Vert g_t \Vert^2
	+ \frac{1}{2\eta} \sum_{t=1}^T \Bigl\{ 
		\Vert x_{t+1} - x_1 \Vert^2 - \Vert x_{t} - x_1 \Vert^2
	\Bigr\} \\
& = \frac{\eta}{2} \sum_{t=1}^T \Vert g_t \Vert^2
	+ \frac{1}{2\eta} 	\Vert x_\star - x_1 \Vert^2,
\end{align*}
\allowdisplaybreaks[0]
which concludes the proof.
\end{proof}

\subsection{Deriving proof for OGD from the PEP methodology}

Now, we turn to OGD, and again use~\eqref{eq:PEP}
to derive a proof that $B_T(\eta)$ is an upper bound of the regret for OGD
when using the same step-size for all times (i.e.\ $\eta_t = \eta$ for all $t$)
and with static regret.

We now turn to study OGD using the PEP methodology.
We consider the following version of OGD with a single horizon-dependent step-size
(where $\Pi_{\cK}$ denotes the euclidean projection on the convex closed set $\cK$).
\begin{algorithm}[H]
\caption{Online Gradient Descent (OGD)}
\label{OGD_alg_appendix}
\begin{algorithmic}[1]
\Require $T\geq 1$,~ $x_1 \in \cK$, $\eta \geq 0$
\For{$t=1$ to $T$} 
    \State Play $x_t$, pay cost $\ell_t(x_t)$,  observe $g_t = \nabla\ell_t(x_t)$.
    \State $x_{t+1} \gets \Pi_{\cK}(x_t - \eta g_t) =  \argmin_{x\in \cK}  \Vert x - (x_t - \eta g_t) \Vert^2 $
\EndFor
\end{algorithmic}
\end{algorithm}

Adapting the PEP formulation~\eqref{eq:PEP} of OFW to the case of OGD, we get:
\begin{equation}\label{eq:PEP_OGD}
\begin{aligned}
B_T(\eta)\triangleq
\sup_{\substack{\cK, \{\ell_t\}_{t\in\llbracket 1,T \rrbracket}\\ 
			x_\star, \{x_t\}_{t\in\llbracket 1,T \rrbracket}\\
			d\in\mathbb{N}}} \,
&
R_T(x_1,\ldots,x_T; x_\star)\\
\text{subject to: } 
&  \ell_t \text{ is convex and $L$-Lipschitz for }t\in\llbracket 1,T \rrbracket,\\
& \cK \text{ is a non-empty closed convex set of $\mathbb{R}^d$,}\\ 
&\mathrm{Diam}(\cK)\leq D,\\
& \{x_t\}_{t=1,\ldots,T} \text{ is generated by Algorithm~\ref{OGD_alg_appendix}}.\\
\end{aligned}
\end{equation}

We now show how to use~\eqref{eq:PEP_OGD}
to derive a proof that $B_T(\eta)$ is an upper bound of the regret for OGD.

\begin{lemma}\label{lemma_bound_OGD}
Let $T\geq 1$.
Assume that the cost functions $\ell_t$ are convex and $L$-Lipschitz for all $t\in \llbracket 1, T \rrbracket$,
and that the convex closed domain $\cK$ of feasible points has a diameter bounded by $D$.
Then, for any $x_\star \in\cK$, the following upper bound 
on the regret of the OGD Algorithm~\ref{OGD_alg_appendix} holds:
\begin{equation*}
R_T(x_1,\cdots, x_T; x_\star)
\leq \frac{\eta}{2} \sum_{t=1}^T \Vert g_t \Vert^2
	+ \frac{1}{2\eta} \Vert x_\star - x_1 \Vert^2.
\end{equation*}
In particular, for $\eta = D / (L \sqrt{T})$, we get that the regret is upper bounded by
$D L \sqrt{T}$.
\end{lemma}

We start by explaining how to use~\eqref{eq:PEP_OGD} to infer
the proof of Lemma~\ref{lemma_bound_OGD},
and then we will do the proof itself.
As the idea is similar to the case of FTRL, 
we just give the general idea.

Using a similar argument to that of Section~\ref{s:wc_regret_construction} for OFW
and Appendix~\ref{s:PEP_for_FTRL}, the PEP program~\eqref{eq:PEP_OGD} for OGD can be 
reformulated as the following finite dimensional program:
\begin{equation*}\label{eq:PEP_OGD_bis}\begin{aligned}
\sup_{\substack{
				x_\star, \{x_t\}_{t=1,\ldots,T}\\
				\{g_t\}_{t=1,\ldots,T}\\
				s_\star, \{s_t\}_{t=1,\ldots,T}\\
				d\in\mathbb{N}}} \,
	& \sum_{t=1}^T \langle g_t, x_t - x_\star \rangle
	\\
\text{subject to: } 
	& \Vert g_t \Vert \leq L 
		\text{ for }t=1,\ldots,T,\\
	&\Vert x_t-x_\star\Vert\leq D, \text{ for } t=1,\ldots,T,\\
	&\Vert x_i-x_j\Vert\leq D, \text{ for } i,j=1,\ldots,T,\\
	& \langle s_t, x_\star - x_t \rangle \leq 0 
		\text{ and } \langle s_\star, x_t - x_\star \rangle \leq 0
		\text{ for }t=1,\ldots,T, \\
	& \langle s_i, x_j - x_i \rangle \leq 0 \text{ for } i,j=1,\ldots,T,\\
	& x_t - ( x_{t-1} - \eta  g_{t-1}) + s_t = 0 \text{ for } t=2,\ldots,T. \\
\end{aligned}
\end{equation*}

We then turn this PEP into an SDP using the Gram matrix $G$
associated to the matrix $P$ containing all vectors (without redundancy) as before.
We define the vectors $\bar{x}_\star$, $\bar{s}_\star$,
$\bar{x}_t$ and $\bar{g}_t$ for $t=1,\dots,T$ as before.
As the optimality constraint is different between OGD and FTRL,
here we have $\bar{s}_t = - \bar{x}_t + (\bar{x}_{t-1} - \eta \bar{g}_{t-1})$
for $t=2,\dots,T$.
(Note that the value of $s_1$ can be set freely to zero in the PEP above, 
so we can simply set $\bar{s}_1=0$.)
This allows us to redefine~\eqref{eq:PEP_OGD} as
the following equivalent SDP:
\begin{equation}\label{eq:SDP_OGD_1}\begin{aligned}
\sup_{\substack{
				G \succeq 0
			}} \,
	& \sum_{t=1}^T  \bar{g}_t^\transp G (\bar{x}_t - \bar{x}_\star)
	\\
\text{subject to: } 
	&  \bar{g}_t^\transp G \bar{g}_t \leq L 
		\text{ for }t=1,\ldots,T,\\
	& (\bar{x}_t-\bar{x}_\star)^\transp G (\bar{x}_t-\bar{x}_\star) \leq D, \text{ for } t=1,\ldots,T,\\
	& (\bar{x}_i-\bar{x}_j)^\transp G (\bar{x}_i-\bar{x}_j) \leq D, \text{ for } i,j=1,\ldots,T,\\
	&  \bar{s}_t^\transp G (\bar{x}_\star - \bar{x}_t ) \leq 0 
		\text{ and }  \bar{s}_\star^\transp G (\bar{x}_t - \bar{x}_\star) \leq 0
		\text{ for }t=1,\ldots,T, \\
	& \bar{s}_i^\transp G (\bar{x}_j - \bar{x}_i) \leq 0 \text{ for } i,j=1,\ldots,T.\\
\end{aligned}
\end{equation}
Note that~\eqref{eq:SDP_OGD_1} for OGD looks identical to~\eqref{eq:SDP_FTRL_1} for FTRL,
the only (implicit) difference is the definition of the vectors $\bar{s}_t$
for $t=1,\dots,T$.

Choosing $\eta = D / (L \sqrt{T})$,
running numerical simulations of~\ref{eq:SDP_OGD_1} for small values of $T$,
relaxing constraints with small dual variable values
and further relaxing by grouping constraints with the same dual variables values,
we get a new upper bound on the regret of OGD
(indeed, with same value) via the following SDP:
\begin{equation}\label{eq:SDP_OGD_2}\begin{aligned}
\sup_{\substack{
				G \succeq 0
			}} \,
	& \sum_{t=1}^T  \bar{g}_t^\transp G (\bar{x}_t - \bar{x}_\star)
	\\
\text{subject to: } 
	&  \sum_{t=1}^T \bar{g}_t^\transp G \bar{g}_t \leq L T ,\\
	& (\bar{x}_1-\bar{x}_\star)^\transp G (\bar{x}_1-\bar{x}_\star) \leq D, \\
	&   \sum_{t=2}^{T} \bar{s}_t^\transp G (\bar{x}_\star - \bar{x}_t) \leq 0 .\\
\end{aligned}
\end{equation}
We now observe that the values of the dual variables for 
those three remaining constraints are
respectively $\eta/2$, $1/(2\eta)$ and $1/\eta$.
Looking at the two constraints involving constants $L$ and $D$,
we can already infer that the upper bound on the regret will be:
\begin{equation*}
\frac{\eta}{2} \sum_{t=1}^T \Vert g_t \Vert^2
	+ \frac{1}{2\eta} \Vert x_\star - x_1 \Vert^2,
\end{equation*}
which is upper bounded by $L D \sqrt{T}$ for $\eta = D / (L \sqrt{T})$.
From~\eqref{eq:SDP_OGD_2}, 
we now that to prove this upper bound,
we only need to use the third constraint inequality (times $1/\eta$)
and some scalar product / euclidean norm inequalities
(which we will easily infer as for FTRL).

Hence, we have all the ingredients and we are now ready 
to do the proof of Lemma~\ref{lemma_bound_OGD}.

\begin{proof}
We start by summing the boundary inequalities $\langle s_{t}, x_\star - x_t \rangle \leq 0$ 
where $s_t = - x_t + (x_{t-1} - \eta g_{t-1})$ for $t\in\llbracket 2,T\rrbracket$.
This gives us the following inequality (which corresponds to the third constraint in~\eqref{eq:SDP_OGD_2}):
\begin{align*}
0 \geq 
\sum_{t=2}^{T} \langle s_t, x_\star - x_t \rangle
= \sum_{t=2}^{T} \langle x_{t-1} - x_t , x_\star - x_t \rangle
	- \eta \sum_{t=2}^{T} \langle g_{t-1}, x_\star - x_t \rangle ,
\end{align*}
which we more conveniently rewrite as:
\begin{equation*}
\sum_{t=2}^{T} \langle g_{t-1}, x_t - x_\star  \rangle 
\leq \frac{1}{\eta} \sum_{t=2}^{T} \langle x_{t-1} - x_t , x_t - x_\star \rangle .
\end{equation*}

Now, we use this inequality and
the inequality 
$\langle u, v \rangle \leq \frac{\eta}{2} \Vert u \Vert^2 + \frac{1}{2\eta} \Vert v \Vert^2$,
as well as the convexity of the cost functions $\ell_t$,
to upper bound the regret for OGD:
\allowdisplaybreaks[1]
\begin{align*}
R_T(x_1,\ldots,x_T; x_\star) 
& \leq \sum_{t=1}^T \langle g_t, x_t - x_\star \rangle \\
& = \sum_{t=1}^T \langle g_t, x_t - x_{t+1} \rangle
	+ \sum_{t=1}^{T-1} \langle g_t, x_{t+1} - x_\star \rangle \\
& = \sum_{t=1}^T \langle g_t, x_t - x_{t+1} \rangle
	+ \sum_{t=2}^{T} \langle g_{t-1}, x_{t} - x_\star \rangle \\
& \leq \frac{\eta}{2} \sum_{t=1}^T \Vert g_t \Vert^2
	+ \frac{1}{2\eta} \sum_{t=1}^T \Vert x_t - x_{t+1} \Vert^2
	+ \frac{1}{\eta} \sum_{t=2}^{T} \langle x_{t-1} - x_t , x_t - x_\star \rangle \\
& \leq \frac{\eta}{2} \sum_{t=1}^T \Vert g_t \Vert^2
	+ \frac{1}{2\eta} \sum_{t=1}^T \Vert x_t - x_{t+1} \Vert^2
	+ \frac{1}{\eta} \sum_{t=1}^{T-1} \langle x_{t} - x_{t+1} , x_{t+1} - x_\star \rangle \\
& = \frac{\eta}{2} \sum_{t=1}^T \Vert g_t \Vert^2
	+ \frac{1}{2\eta} \sum_{t=1}^{T-1} \Bigl\{
		\Vert x_t - x^* \Vert^2
		- \Vert x_{t+1} - x^* \Vert^2
	\Bigr\}
	+ \frac{1}{2\eta} \Vert x_T - x^* \Vert^2 \\
& = \frac{\eta}{2} \sum_{t=1}^T \Vert g_t \Vert^2
	+ \frac{1}{2\eta} \Vert x_1 - x^* \Vert^2 ,
\end{align*}
\allowdisplaybreaks[0]
which concludes the proof.
\end{proof}

\subsection{Adapting the proof for FTRL to Bregman divergences}
We now adapt the proof for FTRL of Lemma~\ref{lemma_bound_FTRL}
to the case of general norm and Bregman divergences.
This illustrates the fact that the proof we derived from the PEP~\eqref{eq:PEP_FTRL}
is simple and clean enough to easily be adapted to a more general setting.

Let $\Vert \cdot \Vert$ denote a general norm,
and denote by $\Vert \cdot \Vert_*$ its associated dual norm defined by 
$\Vert g \Vert_* = \sup_{x \ : \ \Vert x \Vert \leq 1} \langle g, x \rangle$.
In particular, this definition implies the generalised Cauchy-Schwarz inequality
$ \langle g, x \rangle \leq \Vert g \Vert_* \Vert x \Vert$.
Let $\psi$ be a $1$-strongly convex function w.r.t.\ the norm $\Vert \cdot \Vert$, that is:
\[ \psi(x) \geq \psi(y) + \langle \nabla\psi(y), x-y \rangle + \frac{1}{2} \Vert x-y \Vert^2 .\]
The Bregman divergence associated to $\psi$ is then defined as:
\[ B_\psi(x; y) = \psi(x) - \psi(y) - \langle \nabla\psi(y), x-y \rangle .\]
The Bregman divergence $B_\psi(x;y)$ 
will be used as a regularization function to replace 
the term $\frac{1}{2} \Vert x-y \Vert^2$ used before.

Then, we define the iterates of FTRL (on linearized losses) 
with Bregman divergence regularization for all $t$ as:
\begin{equation}\label{eq:def_FTRL_bregman}
x_{t+1}=\argmin_{x\in\cK}
	B_\psi(x;x_1)
	+\eta \sum_{\tau=1}^t \langle \nabla \ell_\tau(x_\tau),\,x\rangle .
\end{equation}
In particular, the sub-gradients $s_t \in \partial i_\cK(x_t)$ is defined
by the optimality condition:
\begin{equation}\label{eq:def_s_t_Bregman}
\nabla\psi(x_t) - \nabla\psi(x_1) + \eta \sum_{\tau=1}^{t-1} g_\tau + s_t = 0 .
\end{equation}

We can now state the following lemma upper bounding the regret of FTRL with Bregman divergence,
and whose proof is an adaptation of the proof of Lemma~\ref{lemma_bound_FTRL}.

\begin{lemma}\label{lemma_bound_FTRL_Bregman}
Let $T\geq 1$.
Assume that the cost functions $\ell_t$ are convex and $L$-Lipschitz for all $t\in \llbracket 1, T \rrbracket$,
and that the convex closed domain $\cK$ of feasible points is such that $\sup_{x,y\in\cK} B_\psi(x;y) \leq \frac{1}{2}D^2$ for some $D$.
Then, for any $x_\star \in\cK$, the following upper bound 
on the regret of the FTRL Algorithm defined by~\eqref{eq:def_FTRL_bregman} holds:
\begin{equation*}
R_T(x_1,\cdots, x_T; x_\star)
\leq \frac{\eta}{2} \sum_{t=1}^T \Vert g_t \Vert_*^2
	+ \frac{1}{\eta} B_\psi(x_\star ; x_1).
\end{equation*}
In particular, for $\eta = D / (L \sqrt{T})$, we get that the regret is upper bounded by
$D L \sqrt{T}$.
\end{lemma}

\begin{proof}
For simplicity, we write $x_{T+1} = x_\star$.
As in the proof of Lemma~\ref{lemma_bound_FTRL},
we start by summing the boundary inequalities $\langle s_{t}, x_{t+1} - x_t \rangle \leq 0$ for $t\in\llbracket 1,T\rrbracket$ where $s_t$ is defined in~\eqref{eq:def_s_t_Bregman}.
This gives us:
\begin{align*}
0 & \geq \sum_{t=1}^{T} \langle s_t , x_{t+1} - x_t \rangle \\
& = - \sum_{t=1}^{T}  \langle \nabla\psi(x_t) - \nabla\psi(x_1), x_{t+1} - x_t \rangle 
	- \eta \sum_{t=1}^{T} \sum_{i = 1}^{t-1} \langle g_i , x_{t+1} - x_t \rangle \\
& = - \sum_{t=1}^{T}  \langle \nabla\psi(x_t) - \nabla\psi(x_1), x_{t+1} - x_t \rangle 
	-\eta \sum_{i = 1}^{T-1}  \langle g_i , x_{T+1} - x_{i+1} \rangle .	
\end{align*}
We then rewrite this inequality as:
\begin{equation*}
 \sum_{t = 1}^{T-1}  \langle g_t , x_{t+1} - x_\star \rangle
\leq \frac{1}{\eta} \sum_{t=1}^{T}  \langle \nabla\psi(x_t) - \nabla\psi(x_1), x_{t+1} - x_t \rangle .
\end{equation*}
We will need the following inequality, which comes directly from rewriting
the definition of $1$-strong convexity of~$\psi$:
\[ \frac{1}{2} \Vert x-y \Vert^2 
\leq \psi(x) - \psi(y) - \langle \nabla\psi(y), x-y \rangle .
\]

Now, we use those inequalities and
the inequality:
\[ \langle g, x \rangle 
\leq \Vert g \Vert_* \Vert x \Vert
\leq \frac{\eta}{2} \Vert g \Vert_*^2 + \frac{1}{2\eta} \Vert x \Vert^2 ,
\]
as well as the convexity of the cost functions $\ell_t$,
to upper bound the regret for FTRL (writing $R_T$ for $R_T(x_1,\ldots,x_T; x_\star)$):
\allowdisplaybreaks[1]
\begin{align*}
R_T & \leq \sum_{t=1}^T \langle g_t, x_t - x_\star \rangle \\
& = \sum_{t=1}^T \langle g_t, x_t - x_{t+1} \rangle
	+ \sum_{t=1}^T \langle g_t, x_{t+1} - x_\star \rangle \\
& \leq \sum_{t=1}^T \langle g_t, x_t - x_{t+1} \rangle
	+ \frac{1}{\eta} \sum_{t=1}^T \langle \nabla\psi(x_t) - \nabla\psi(x_1), x_{t+1} - x_t \rangle \\
& \leq \frac{\eta}{2} \sum_{t=1}^T \Vert g_t \Vert_*^2
	+ \frac{1}{2\eta} \sum_{t=1}^T \Vert x_t - x_{t+1} \Vert^2
	+ \frac{1}{\eta} \sum_{t=1}^T \langle \nabla\psi(x_t) - \nabla\psi(x_1), x_{t+1} - x_t \rangle \\
& \leq \frac{\eta}{2} \sum_{t=1}^T \Vert g_t \Vert_*^2
	+ \frac{1}{\eta} \sum_{t=1}^T \Bigl\{ 
		\psi(x_{t+1}) - \psi(x_t)
		- \langle \nabla\psi(x_t), x_{t+1} - x_t \rangle
	\Bigr\} 
	+ \frac{1}{\eta} \sum_{t=1}^T \langle \nabla\psi(x_t) - \nabla\psi(x_1), x_{t+1} - x_t \rangle \\
& = \frac{\eta}{2} \sum_{t=1}^T \Vert g_t \Vert_*^2
	+ \frac{1}{\eta}  \Bigl\{ 
		\psi(x_\star) - \psi(x_1)
		- \langle \nabla\psi(x_1), x_\star - x_1 \rangle
	\Bigr\} \\
& = \frac{\eta}{2} \sum_{t=1}^T \Vert g_t \Vert_*^2
	+ \frac{1}{\eta} 	B_\psi( x_\star ; x_1 ),
\end{align*}
\allowdisplaybreaks[0]
which concludes the proof.
\end{proof}

\subsection{Generic SDP for PEP}

In this section, we present the following generic form of an SDP arising from a PEP.
\begin{equation}\label{eq:SDP_generic}\tag{SDP-generic}
\begin{aligned}
\sup_{\substack{
				G \succeq 0 \\ F
			}} \,
	&  \tr(A_{\text{obj}}G) +  \langle a_{\text{obj}}, F \rangle
	\\
\text{subject to: } 
	&  \tr(A_i G) + \langle a_{i}, F \rangle \leq b_i , \text{ for } i=1,\ldots,N. \\
\end{aligned}
\end{equation}
The semidefinite positive matrix variable $G$ corresponds to the Gram matrix of the gradients and points,
and the vector variable $F$ corresponds to the sampled function values. 
Note that in our case, the variable $F$ can be removed as we get a tight relaxation of the PEP by considering only linear cost functions $\ell_t$.

We then form the (generic form) Lagrangian function associated to the SDP~\eqref{eq:SDP_generic}
by dualizing all the constraints except the semidefinite positive constraint $G\succeq 0$.
\begin{align*}
L(G,F; \lambda_{1:N}) 
& = \tr(A_{\text{obj}}G) +  \langle a_{\text{obj}}, F \rangle
	- \sum_{i=1}^N \lambda_i \Bigl[
			\tr(A_i G) + \langle a_i, F \rangle - b_i
		\Bigr] \\
& = \sum_{i=1}^N b_i \lambda_i 
	+ \tr\left( \left(A_{\text{obj}} - \sum_{i=1}^N \lambda_i A_i\right) G \right) 
	+  \left\langle a_{\text{obj}} - \sum_{i=1}^N  \lambda_i a_i, F \right\rangle
\end{align*}

By maximizing this Lagrangian function over the primal variables $G$ and $F$,
we can form the Lagrange dual problem to the generic form SDP~\eqref{eq:SDP_generic}
which is also an SDP.
\begin{equation}\label{eq:dual_SDP_generic}\tag{dual-SDP-generic}
\begin{aligned}
\inf_{\substack{
				\lambda_i \geq 0, i=1,\dots, N
			}} \,
	&  \sum_{i=1}^N b_i \lambda_i 
	\\
\text{subject to: } 
	& a_{\text{obj}} = \sum_{i=1}^N  \lambda_i a_i , 
	\ \text{   and   } \  S:= A_{\text{obj}} - \sum_{i=1}^N \lambda_i A_i \preceq 0 . \\
\end{aligned}
\end{equation}
Note that in this dual SDP, there is a semidefinite positive constraint involving a matrix $S$
which corresponds to the dual variable of the primal semidefinite positive constraint $G\succeq 0$.
By studying the numerical values of the Cholesky decomposition of this matrix $S$ at optimum,
we can get numerical insights on the scalar product / euclidean norm inequalities
that need to be used in proofs of regret upper bounds.

\end{document}